\definecolor{purple}{rgb}{0.65, 0, 1}
\definecolor{orange}{rgb}{0.95,.35,0}
\newcommand{\ee}{\varepsilon}
\newcommand{\defeq}{{\coloneqq}}
\newcommand{\vertiii}[1]{{\left\vert\kern-0.25ex\left\vert\kern-0.25ex\left\vert #1
		\right\vert\kern-0.25ex\right\vert\kern-0.25ex\right\vert}}
\newcommand{\vertll}[1]{{\left[\kern-0.1ex\left[ #1
		\right]\kern-0.1ex\right]}}
\newcommand{\Rd}{{\mathbb R^n}}
\newcommand{\diver}{\nabla \cdot}
\newcommand{\vv}[1]{\mathbf{#1}}
\newcommand*\diff{\mathop{}\!\mathrm{d}}
\newcommand{\rhos}{{\widetilde \rho}}
\newcommand{\Ws}{{\widetilde W}}
\newcommand{\Es}{{\widetilde E}}
\newtheorem{theorem}{Theorem}[section]
\newtheorem{corollary}[theorem]{Corollary}%
\newtheorem{lemma}[theorem]{Lemma}%
\theoremstyle{definition}
\newtheorem{remark}[theorem]{Remark}%
\numberwithin{equation}{section}
\renewcommand*{\@fnsymbol}[1]{\ensuremath{\ifcase#1\or \star \or \dagger\or \ddagger\or
		\mathsection\or \mathparagraph\or \|\or **\or \dagger\dagger
		\or \ddagger\ddagger \else\@ctrerr\fi}}
\title{Asymptotic simplification of Aggregation-Diffusion equations towards the heat kernel}
\author{Jos\'e A. Carrillo%
\thanks{Mathematical Institute, University of Oxford, Oxford OX2 6GG, UK.  \href{mailto:carrillo@maths.ox.ac.uk}{carrillo@maths.ox.ac.uk}} %
\and David Gómez-Castro%
\thanks{Mathematical Institute, University of Oxford, Oxford OX2 6GG, UK.  \href{mailto:gomezcastro@maths.ox.ac.uk}{gomezcastro@maths.ox.ac.uk}}
\and Yao Yao%
\thanks{
School of Mathematics, Georgia Institute of Technology. Atlanta, GA 30332, USA.  yaoyao@math.gatech.edu
}
 \and Chongchun Zeng%
\thanks{
School of Mathematics, Georgia Institute of Technology. Atlanta, GA 30332, USA.  chongchun.zeng@math.gatech.edu
}
} 
\begin{document}
	
	\maketitle

\begin{abstract}
We give sharp conditions for the large time asymptotic simplification of aggregation-diffusion equations with linear diffusion. As soon as the interaction potential is 
bounded and its first and second derivatives decay fast enough at infinity,
then the linear diffusion overcomes its effect, either attractive or repulsive, for large times independently of the initial data, and solutions behave like the fundamental solution of the heat equation with some rate.
The potential $W(x) \sim  \log |x|$ for $|x| \gg 1$ appears as the natural limiting case when the intermediate asymptotics change.
In order to obtain such a result, we produce uniform-in-time estimates in a suitable rescaled change of variables for the entropy, 
the second moment, 
Sobolev norms and the $C^\alpha$ regularity with a novel approach for this family of equations using modulus of continuity techniques.
\end{abstract}

\textbf{Keywords:} aggregation-diffusion, intermediate asymptotics, decay rates

\textbf{MSC (2020):} 
35B40,  %
35B45,  %
35C06,  %
35Q92  %

\section{Introduction}
In this work, we analyse the long time asymptotics for probability density solutions to the general aggregation-diffusion equation of the form
\begin{equation}
	\label{eq:ADE rho}
	\tag{P}
	\frac{\partial\rho}{\partial t} = \Delta \rho + \diver (\rho \nabla W * \rho ),
\end{equation}
with $W:\Rd\to \mathbb{R}$ being the interaction potential which is assumed to be symmetric $W(x) = W(-x)$. The assumption of unit mass is not restrictive up to a change of variables due to the (formal) conservation of mass. This work is devoted to identify sharp conditions on the interaction potential $W$ such that the intermediate asymptotic behaviour of the solutions to \eqref{eq:ADE rho} is given by the heat kernel, 
\[
K(t,x) = (2t)^{-\frac n 2} G\left(\frac x{\sqrt{2t}}\right), \qquad \text{where } G(y) = (2\pi)^{-\frac n 2} e^{-\frac {|y|^2} 2 }.
\]
More precisely, our goal is to find the best possible conditions on $W$ such that
\begin{equation}\label{eq:asimp}
    \| \rho(t,\cdot) - K(t,\cdot) \|_{L^1} \to 0, \qquad \text{ as } t \to \infty,
\end{equation}
and if possible, recover the optimal decay rates of the heat equation. Aggregation-diffusion equations of the form \eqref{eq:ADE rho} with linear or nonlinear diffusion are ubiquitous in the literature due to the large number of applications in mathematical biology and mathematical physics, we refer to \cite{CCY19} and the references therein for a recent survey of related results. In the case of linear diffusion, \eqref{eq:ADE rho} is usually referred as the McKean-Vlasov equation associated to a nonlinear SDE process via the mean field limit \cite{JW17,BJW19}. 

The asymptotic simplification of \eqref{eq:ADE rho} can be understood as the case in which the long-time asymptotics of McKean-Vlasov equations is dominated by the linear diffusion term leading to self-similar diffusive behavior for large times. Notice that this result is not true for instance for singular attractive potentials as the Keller-Segel model for chemotaxis or its variants in the diffusion dominated regime \cite{BDP06,BCC12,CCV15,CHVY19,CHMV18} or for McKean-Vlasov equations where the potentials may lead to phase transitions as in \cite{Tu13,BCCD16}. In these cases, there are non-trivial stationary states of the equation \eqref{eq:ADE rho} that attract the long-time dynamics for certain initial data.

Therefore, finding the sharpest conditions on $W$ such that the asymptotic simplification occurs is a challenging question. Notice that even for bounded interaction potentials $W$, the mere time decay of $L^p$ norms, $1<p\leq \infty$, was not known for general initial data. We also extend previous results of \cite{Canizo+Carrillo+Schonbek2012} in which strong integrability assumptions on $W$ and $\nabla W$ were imposed, as well as smallness conditions on $\rho_0$. 
In \cite{GK10} the authors study the case $n = 1$ with $\nabla W \in L^1$ showing \eqref{eq:asimp} without rate for general initial datum.

There is a long literature devoted to the intermediate asymptotics of convection-diffusion equations. 
Results for the heat equation ($W=0$) can be recovered directly from the heat kernel representation (see, e.g.  \cite{Vazquez2017}). 
Better decay rates can be deduced by cancellation of higher order moments, as presented in
\cite{duoandikoetxea1992moments}. 
In \cite{AMTU2001} the authors introduce entropy dissipation arguments through the logarithmic Sobolev inequality that work for a large array of diffusion problems (see also \cite{To99}).
In \cite{ACK08} this method is applied to the heat equation, to recover improved decay rates.
This technique was also used in \cite{Bedrossian11} to recover similar results for \eqref{eq:ADE rho} to recover decay rates, even when the linear diffusion $\Delta \rho$ is replaced by the nonlinear diffusion $\Delta A(\rho)$.
In \cite{EZ91} the authors study the case where the convection is of the type $a \cdot \nabla (|u|^{q-1} u)$. 

As mentioned above, when $W$ has certain growth at infinity there is no decay, and, in fact, $\rho(t)$ converges to an stationary solution which can be recovered by minimisation of free-energy functional (see \cite{CDF19}).
The key example, as we will discuss below, is $W(x) = \chi \ln|x|$ ($\chi >0$), known as the Keller-Segel problem in $n=2$. 
In \cite{BDEF10} the authors discuss the case where $\chi$ is smaller than a critical value, and prove there exists an asymptotic profile different from the Gaussian. This result also holds for any other $n$ (see \cite{BCC08}).
A variation of this problem is studied \cite{LS07} also for small $\chi$. 
When $\chi$ is larger than the critical value, solutions may produce a Dirac delta in finite time.

To analyse the intermediate asymptotics of \eqref{eq:ADE rho}, one classically works in rescaled variables \cite{To99,Carrillo+Toscani2000,AMTU2001}. Following the parabolic scaling of the heat kernel, we 
introduce the new variables $\tau :=\log \sqrt{2t+1}$ and $y:=\frac{x}{\sqrt{2t+1}}$, and
consider a rescaled density $\rhos$ given by 
\begin{equation}
\label{eq:rhos}
    \rhos (\tau,y ) = e^{n \tau} \rho \Big (  \tfrac 1 2 (e^{2\tau} - 1), e^\tau y \Big ).
\end{equation}
The first key element is the existence of a PDE for $\rhos$. It was shown in \cite{Canizo+Carrillo+Schonbek2012} that
we can write
\begin{equation}
\label{eq:Fokker-Plack rho scaled}
	\frac{\partial \rhos}{\partial \tau}  =  \Delta_y \rhos + \nabla_y\cdot (y \rhos ) +  \nabla_y \cdot ( \rhos \nabla_y  (\Ws * \rhos)  ), \qquad \text{where } 	\Ws (\tau, y) := W (e^\tau y).
\end{equation}
Notice that, even though
$\| \widetilde W(\tau,\cdot) \|_{L^1} = e^{-n\tau} \| W \|_{L^1}$ is decaying in $\tau$ if $W\in L^1(\Rd)$,
the norms of $\nabla \Ws,\Delta \Ws$ can exponentially grow in $L^p (\Rd)$ depending on $n$ and $p$. We still expect, under some assumptions, this last term to vanish asymptotically to recover the steady-state of the usual Fokker-Planck equation. However, one cannot directly use classical energy estimates to prove uniform-in-time $L^p$ bounds of $\rhos$ without any additional smallness assumptions on $W$ or $\rho_0$.

First, we obtain a result of global existence and instant regularisation by standard techniques. We then introduce a new estimate on the variational structure of the equation to prove uniform-in-time bounds of natural quantities for the problem such as the second moment, the energy and the entropy. As a consequence, we also show uniform-in-time propagation of the $L^2$, $H^1$ and $C^\alpha$ norms.

\begin{theorem}
    \label{thm:1}
    Let $W \in \mathcal W^{1,\infty} (\Rd)$ 
    and $\rho_0 \in L^1_+ (\mathbb R^n)$ with unit mass.
    Then, there exists a unique mild solution of \eqref{eq:ADE rho} such that $\rho \in C([0,\infty) ; L^1_+ (\Rd)) \cap C((0,+\infty); W^{k,p} (\Rd))$ for all $p \in [1,\infty], k \in \Rd$ with $\rho(t)$ also of unit mass for $t\ge 0$.
    Assume, furthermore, 
    $W(x) = W(-x)$ and
    the initial datum has bounded second order moment and entropy
    \begin{equation}
        \label{eq:rho0 first moment bound}
        \int_{\mathbb R^n} |x|^2 \rho_0 (x) \diff x < \infty, \qquad \int_{\mathbb R^n} \rho_0 \log \rho_0 < \infty.
    \end{equation}
    Then the rescaled density $\rhos$ satisfies
    \begin{equation*}
        \sup_{\tau \ge 0} \int_{\mathbb R^n} |y|^2 \rhos (\tau,y) \diff y < \infty, \qquad \sup_{\tau \ge 0} \int_{\mathbb R^n} \rhos(\tau,y) |\log \rhos (\tau,y)| \diff y < \infty.
    \end{equation*}
    Moreover,  
    \begin{enumerate}
    \item If $\nabla W \in L^n (\Rd)$ then
    \begin{equation*}
        \sup_{\tau \ge 1} \| \rhos(\tau, \cdot) \|_{H^1 } < \infty.
    \end{equation*}
    \item If $n \ge 2$, $\nabla W \in L^n (\Rd)$ and $\Delta W \in L^{\frac n2} (\Rd)$ then
    \begin{equation*}
        \sup_{\tau \ge 1} \| \rhos(\tau, \cdot) \|_{C^\alpha } < \infty, \qquad \forall \alpha \in (0,1).
    \end{equation*}
    \end{enumerate}
    \end{theorem}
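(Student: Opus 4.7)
The plan is to establish the three conclusions in sequence: existence with instant smoothing, the uniform moment and entropy bounds, and the Sobolev and Hölder propagation.

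For existence, uniqueness and instant regularisation, I would use a standard Duhamel fixed-point argument. Since $\nabla W \in L^\infty$, the drift $\nabla W * \rho$ is uniformly bounded by $\|\nabla W\|_{L^\infty}$, so \eqref{eq:ADE rho} is a heat equation with a bounded self-consistent drift. The mild formulation combined with the smoothing $\|\nabla e^{t\Delta}\|_{L^1 \to L^1} \lesssim t^{-1/2}$ gives a contraction on short time intervals in $L^1 \cap L^\infty$. Global existence follows from conservation of $L^1$ mass and the bounded-drift maximum principle, and instant $W^{k,p}$ regularity is standard heat-kernel smoothing.

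For the moment and entropy bounds, I work in the rescaled formulation \eqref{eq:Fokker-Plack rho scaled} with the time-dependent free energy
\[
\mathcal F(\tau) = \int \rhos \log \rhos \, dy + \tfrac12 \int |y|^2 \rhos\, dy + \tfrac12 \iint \Ws(\tau, y-y')\rhos(y)\rhos(y')\, dy\, dy'.
\]
Differentiating \eqref{eq:Fokker-Plack rho scaled} and using $\partial_\tau \Ws(z) = z \cdot \nabla \Ws(z)$ gives
\[
\frac{d\mathcal F}{d\tau} = -\int \rhos\,|\nabla \log\rhos + y + \nabla\Ws * \rhos|^2\, dy + \tfrac12 \iint (y-y')\cdot\nabla\Ws(y-y')\,\rhos\rhos\, dy\, dy'.
\]
The first term is a non-negative dissipation $D(\tau)\ge 0$. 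The pump term is the subtle one, but under the scaling $x = e^\tau y$ and the symmetry $W(-x)=W(x)$ it collapses to the classical virial in the original variables: the pump equals $n - \tfrac12 m_2'(t)$, where $m_2(t) = \int |x|^2 \rho\, dx$. Since the rescaled second moment $I(\tau) = \int |y|^2 \rhos\, dy = e^{-2\tau} m_2(t)$ satisfies $\tfrac{dI}{d\tau} = m_2'(t) - 2I$, the two $m_2'$ contributions cancel exactly, yielding the clean identity
\[
\frac{d}{d\tau}\bigl[\mathcal F(\tau) + \tfrac12 I(\tau)\bigr] = -D(\tau) + n - I(\tau).
\]
Integrating in $\tau$, and using $\bigl|\tfrac12 \iint \Ws \rhos \rhos\bigr| \leq \tfrac12 \|W\|_{L^\infty}$ together with the entropy–moment inequality $\int \rhos \log \rhos \geq -\tfrac n2 \log(2\pi e I/n)$, the integrated inequality can be upgraded by a bootstrap argument exploiting the non-negative dissipation $D$ into a uniform pointwise bound on $I(\tau)$; the entropy bound is then a consequence of the same entropy–moment inequality.

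For (1), the hypothesis $\nabla W \in L^n$ is exactly scale-invariant for the rescaled potential, since $\|\nabla\Ws(\tau,\cdot)\|_{L^n}=\|\nabla W\|_{L^n}$ for every $\tau$. An $L^2$ energy estimate for $\rhos$ in \eqref{eq:Fokker-Plack rho scaled}, with the drift $\rhos\,\nabla\Ws * \rhos$ absorbed into the diffusion via Hölder (controlling $\|\nabla\Ws * \rhos\|_{L^\infty}$ by $\|\nabla W\|_{L^n}\|\rhos\|_{L^{n/(n-1)}}$) and Gagliardo–Nirenberg together with the uniform $L^1$ and entropy bounds from step two, gives the uniform $L^2$ bound; differentiating the equation in $y$ and repeating yields the $H^1$ bound. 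For (2), $\Delta W \in L^{n/2}$ is likewise scale-invariant, and the plan is to adapt the modulus-of-continuity technique in the spirit of Kiselev–Nazarov–Volberg announced in the abstract: construct a concave modulus $\omega$ preserved by the rescaled flow by checking that, at any pair $y_1, y_2$ realising $|\rhos(\tau,y_1) - \rhos(\tau,y_2)| = \omega(|y_1-y_2|)$, the time derivative of this difference is non-positive, with the contribution of the nonlocal drift controlled precisely by $\|\Delta W\|_{L^{n/2}}$ together with the $H^1$ bound from (1). The technical heart and main obstacle is the uniform pointwise bound on $I$ in the second step: the identity $\tfrac{d}{d\tau}[\mathcal F + \tfrac12 I] = n - I - D$ yields at once only the integrated on-average bound $\int_0^\tau I(\sigma)\, d\sigma \leq n\tau + C$, and the upgrade to a uniform pointwise bound must use the dissipation $D$ and the log-Sobolev lower bound on $\mathcal F$ simultaneously.
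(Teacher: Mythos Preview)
Your differential identity
\[
\frac{d}{d\tau}\Bigl[\mathcal F(\tau) + \tfrac12 I(\tau)\Bigr] = -D(\tau) + n - I(\tau)
\]
is correct and is exactly the one the paper derives. The gap is in the closure. You correctly observe that this identity alone yields only the averaged bound $\int_0^\tau I \le n\tau + C$, and you propose to upgrade it via ``a bootstrap argument exploiting the non-negative dissipation $D$'' together with ``the log-Sobolev lower bound on $\mathcal F$''. But the lower bound $\mathcal F \ge -C_0$ only gives $I \le 2(\mathcal F + \tfrac12 I) + 2C_0$, which is the wrong direction: to close the ODE you would need $I$ large to force $\mathcal F + \tfrac12 I$ large, not the reverse. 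And trying to extract information from $D$ in rescaled variables runs into the Gaussian weight $e^{-|y|^2/2}$ inside $D$, which blocks a Gagliardo--Nirenberg argument.

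The paper supplies the missing ingredient by going back to the \emph{original} variables and proving separately (\Cref{thm:free energy decay}) the sharp decay $E[\rho(t)] \le -\tfrac n2\log t + C$. This is an entropy--entropy-production estimate: writing $u=\rho e^{W*\rho}$, one has $dE/dt \le -4e^{-\|W\|_\infty}\int|\nabla\sqrt u|^2$, and Jensen plus Gagliardo--Nirenberg give $E \le \tfrac n2\log(C\int|\nabla\sqrt u|^2)$, hence $dE/dt \le -ce^{2E/n}$. The point is that in original variables there is no confining $|y|^2/2$, so the dissipation is a clean $\int|\nabla\sqrt u|^2$. Translated back, this says $\Es(\tau) := \mathcal F(\tau) - \tfrac12 I(\tau) \le C$ uniformly. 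Now rewrite your identity as $\frac{d}{d\tau}(\Es + I) \le n + \Es - (\Es + I) \le n + \sup_\sigma \Es(\sigma) - (\Es + I)$: this is a closed linear ODI for $\Es + I$, giving $\Es + I \le C$, and then the lower bound $\mathcal F \ge -C_0$ (i.e.\ $\Es + \tfrac12 I \ge -C_0$) yields $I \le 2(C + C_0)$.

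A smaller point on part~(2): you propose to feed the $H^1$ bound from part~(1) into the modulus-of-continuity argument. The paper does not use $H^1$ there; the crucial input is the uniform $L\log L$ bound, which via \Cref{cor:moc + entropy implies boundedness} turns a $C^\alpha$ seminorm bound $K$ into an $L^\infty$ bound $\|\rhos\|_{L^\infty} \lesssim K^{n/(n+\alpha)}(\log K)^{-\alpha/(n+\alpha)}$. The negative power of $\log K$ is exactly what makes the diffusion term $T_1$ dominate the drift terms $T_3,T_4$ at the breakthrough scenario, since both carry the same power of $K$.
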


The well-posedness and instant regularisation parts of the proof of \Cref{thm:1} is based on the study of the Duhamel formula for
\begin{equation}
    \label{eq:heat divergence data}
    \tag{P$_F$}
    \frac{\partial u}{\partial t} - \Delta u = \diver F.
\end{equation}
Existence and uniqueness are proven by a fixed-point argument. Regularity is achieved by a bootstrap argument in fractional Sobolev spaces. 
The details are presented in \Cref{sec:well-posedness}.
To this end, we develop a new Young inequality for fractional spaces that we present in \Cref{sec:fractional Young-Sobolev inequality}.

In order to recover the propagation of regularity, we take advantage of a second key fact: a sharp decay of the free energy, that leads to a uniform-in-time entropy bound in rescaled variables \eqref{eq:rhos}. This is the objective of \Cref{sec:decay free energy}.
More precisely, since $W(x) = W(-x)$, problem \eqref{eq:ADE rho} is the 2-Wassertein flow associated to the free energy
\begin{equation}\label{def_E}
	E(t)=E[\rho(t)] = \int_{\mathbb R^n} \left (  \rho \log \rho + \frac 1 2 \rho (W * \rho)  \right) \diff x  =   \int_{\mathbb R^n}  \rho \log ( \rho e ^{ \frac 1 2 W * \rho } )  \diff x.
\end{equation}
When $W \in L^\infty(\Rd)$, we prove the sharp decay of the energy $E[\rho]$ in \Cref{thm:free energy decay}. It implies that 
\[E(t) \le -\frac{n}{2}\ln t + C(n, \|W\|_{L^\infty}),\]
which tends to $-\infty$ with the same rate $-\frac{n}{2}\ln t$ as for the heat equation.
We next show that there is a suitable free-energy-like quantity that is bounded below in rescaled variables \eqref{eq:rhos}, and hence we will be able to estimate the second moment. Through the second moment and the free energy, we are able to show uniform-in-time equi-integrability in the form
\begin{equation}\label{eq_integrability}
    \sup_{\tau \ge 0} \int_{\Rd} \rhos | \log \rhos | \diff y < \infty .
\end{equation}
The uniform-in-time propagation of regularity is analysed in \Cref{sec:prop regularity}, 
where \eqref{eq_integrability} is used in a crucial way.
For the uniform-in-time bounds of the $H^1$ norm we apply a standard energy estimate. For the propagation of the $C^\alpha$ norm we apply a modulus of continuity argument, which to our knowledge is new for \eqref{eq:ADE rho} but has been applied successfully for other equations (see, e.g., \cite{Kiselev2007,Kiselev2008}).

Equipped with all these uniform-in-time estimates, we can finally characterise, in \Cref{sec:intermediate asymptotics}, the intermediate asymptotics of the solutions of \eqref{eq:ADE rho}.
\begin{theorem}
\label{thm:decay}
    Let $W \in \mathcal W^{1,\infty} (\Rd) \cap L^1 (\Rd)$ such that $W(x) = W(-x)$,  $\nabla W \in L^n (\Rd)$ and, if $n \ge 2$, also that $\Delta W \in L^{\frac n 2} (\Rd)$. Assume that $\rho_0 \in L^1_+ (\Rd)$ is such that it satisfies \eqref{eq:rho0 first moment bound}. Then we have
     \begin{equation*}
        \| \rho(t, \cdot) - K(t, \cdot) \|_{L^1} \le \begin{cases}
        C t^{-\frac 1 4} & \text{if } n=1,\\
        C t^{-\frac 1 2} (1 + \log(1+2t))^{\frac 1 2}  & \text{if } n=2,\\
        C  t^{-\frac 1 2} & \text{if } n\geq3.
        \end{cases}
    \end{equation*}
\end{theorem}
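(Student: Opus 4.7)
The plan is to work in the rescaled variables $\rhos$ defined in \eqref{eq:rhos} and estimate the distance to the standard Gaussian $G$. Since the rescaling is an $L^1$-isometry and the difference between $K(t,x)=(2t)^{-n/2}G(x/\sqrt{2t})$ and the Gaussian self-similar profile $\widetilde K(t,x)\defeq(2t+1)^{-n/2}G(x/\sqrt{2t+1})$ is of order $O(t^{-1})$ in $L^1$ (which is faster than any stated rate), proving \Cref{thm:decay} reduces to showing the appropriate decay of $\|\rhos(\tau,\cdot)-G\|_{L^1}$ in $\tau$.

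I would use the relative entropy $H(\tau)\defeq\int\rhos\log(\rhos/G)\diff y$. Rewriting \eqref{eq:Fokker-Plack rho scaled} in gradient-flow form $\partial_\tau\rhos=\diver(\rhos\nabla[\log(\rhos/G)+\Ws*\rhos])$ (using $\nabla\log G=-y$) and differentiating $H$ gives
\[\frac{dH}{d\tau}=-I(\tau)-\int\rhos\,\nabla\log(\rhos/G)\cdot\nabla(\Ws*\rhos)\diff y,\]
with $I(\tau)\defeq\int\rhos|\nabla\log(\rhos/G)|^2$ the relative Fisher information. Cauchy-Schwarz and Young with small parameter $\delta>0$, combined with the Gaussian log-Sobolev inequality $I\geq 2H$, yield the Gronwall-type bound
\[\dot H\leq -2(1-\delta)\,H(\tau)+\tfrac{1}{4\delta}\,S(\tau),\qquad S(\tau)\defeq\int\rhos|\nabla(\Ws*\rhos)|^2\diff y.\]

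The crucial estimate is a sharp decay of the source $S$. Writing $\nabla(\Ws*\rhos)=\Ws*\nabla\rhos$ and applying H\"older pointwise,
\[\|\nabla(\Ws*\rhos)\|_{L^\infty}\leq\|\Ws(\tau,\cdot)\|_{L^2}\,\|\nabla\rhos(\tau,\cdot)\|_{L^2}=e^{-n\tau/2}\|W\|_{L^2}\|\nabla\rhos\|_{L^2}.\]
Since $W\in L^1\cap L^\infty\subset L^2$, and since $\sup_{\tau\geq 1}\|\nabla\rhos\|_{L^2}<\infty$ by \Cref{thm:1} (whose hypotheses are exactly the ones assumed here), this produces $\|\nabla(\Ws*\rhos)\|_{L^\infty}\leq Ce^{-n\tau/2}$. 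Using $\int\rhos=1$ we conclude $S(\tau)\leq Ce^{-n\tau}$.

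Plugging into the differential inequality and integrating, the decay of $H$ is controlled by the interplay between the intrinsic Fokker-Planck rate $2$ and the source rate $n$: I expect $H(\tau)\leq Ce^{-2\tau}$ when $n\geq 3$ (source strictly faster), $H(\tau)\leq C\tau e^{-2\tau}$ at the borderline $n=2$, and $H(\tau)\leq Ce^{-\tau}$ in the source-dominated case $n=1$. The Csisz\'ar-Kullback-Pinsker inequality $\|\rhos-G\|_{L^1}\leq\sqrt{2H(\tau)}$, combined with $e^\tau=\sqrt{2t+1}$, then translates into precisely the stated rates for $\|\rho(t)-K(t)\|_{L^1}$. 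The main technical obstacle is the delicate Gronwall analysis that gives the sharp constants in the critical and sub-critical regimes: the logarithmic factor at $n=2$ requires handling the resonance between source and dissipation decays, and extracting the clean $e^{-2\tau}$ for $n\geq 3$ requires a careful tuning of $\delta$ (possibly $\delta=\delta(\tau)$). A subsidiary point is that $H(1)$ is finite, which follows from the bounded second moment and entropy assumptions on $\rho_0$ and the uniform bounds established in \Cref{thm:1}.
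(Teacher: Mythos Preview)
Your overall strategy---rescaled variables, relative entropy $H=E_1(\rhos\|G)$, log-Sobolev, Csisz\'ar--Kullback, and reduction to a Gronwall inequality with a source decaying like $e^{-n\tau}$---is exactly the paper's approach. The one substantive difference is how you treat the cross term $R=\int\rhos\,\nabla\log(\rhos/G)\cdot\nabla(\Ws*\rhos)$: you bound $|R|\le I^{1/2}S^{1/2}$ by Cauchy--Schwarz and then absorb part of $I$ via Young, whereas the paper expands $\rhos\nabla\log(\rhos/G)=\nabla\rhos+\rhos\,y$ and bounds the two pieces $J_2=\int\nabla\rhos\cdot\nabla(\Ws*\rhos)$ and $J_1=\int\rhos\,y\cdot\nabla(\Ws*\rhos)$ \emph{directly} by $Ce^{-n\tau}$ using the uniform $H^1$, $L^\infty$ and $\mathcal N_2$ bounds from \Cref{thm:1} (the $L^\infty$ bound for $n\ge2$ is where the $C^\alpha$ estimate, and hence the hypothesis $\Delta W\in L^{n/2}$, enters). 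This yields $\dot H\le -2H+Ce^{-n\tau}$ with the full coefficient $-2$, from which the stated rates drop out immediately.

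Your Cauchy--Schwarz route, by contrast, delivers only $\dot H\le -2(1-\delta)H+C\delta^{-1}e^{-n\tau}$. For $n=1$ this is harmless (the source dominates), and for $n\ge3$ a time-dependent $\delta(\tau)=e^{-a\tau}$ with $0<a<n-2$ does recover $H\le Ce^{-2\tau}$. But for $n=2$ the resonance is genuine: any choice of $\delta(\tau)$ seems to produce at best $H\lesssim\tau^2e^{-2\tau}$ rather than the claimed $\tau e^{-2\tau}$, because the Cauchy--Schwarz bound $|R|\le\sqrt{IS}$ is strictly weaker than the direct bound $|R|\le Ce^{-2\tau}$ whenever $I\gg e^{-2\tau}$. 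So the ``delicate Gronwall analysis'' you flag is not merely technical---it cannot be closed at the stated rate without changing the estimate on $R$. The cleanest fix is to drop the Cauchy--Schwarz step and estimate $J_1,J_2$ directly as the paper does; your source bound $\|\nabla(\Ws*\rhos)\|_{L^\infty}\le Ce^{-n\tau/2}$ (via $\|\Ws\|_{L^2}\|\nabla\rhos\|_{L^2}$) already handles the ingredient needed for $J_2$, and $J_1$ follows similarly once you also invoke the uniform $L^\infty$ and second-moment bounds.
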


This result is obtained by a classical entropy-entropy dissipation argument. Following the idea in \cite{To99,Carrillo+Toscani2000,AMTU2001,Canizo+Carrillo+Schonbek2012}, 
we measure the distance between the rescaled version of the solution $\rhos$ and the Gaussian in 
the $L^1$ relative entropy defined by
\begin{equation}
\label{eq:L1 entropy}
	E_1 (\rhos \| G) = \int_{\Rd} \rhos \log \frac{\rhos}{G} \diff y = \int_\Rd \rhos \log \rhos + \frac 1 2 \int_\Rd |y|^2 \rhos \diff y + \frac n 2 \log ( 2 \pi).
\end{equation}
As in the case of the heat equation, this functional can be differentiated in $\tau$. 
We apply the logarithmic Sobolev and Csiszar-Kullback inequalities to reduce ourselves to estimate the remainder terms with respect the classical heat equation due to $\Ws$. We also discuss the $L^2$ relative entropy and the related $L^2$ intermediate asymptotics (see \Cref{sec:L2 rel entropy}).

For $n \ge 3$ the decay rate coincides with that of the heat equation under our assumptions, and hence it seems sharp as a generic rate.
Notice that it is a simple computation that
\begin{equation*}
    \| K(t,\cdot+a) - K(t,\cdot)  \|_{L^1} \sim t^{-\frac 1 2}.
\end{equation*}
Better decay rates for the heat equation can be obtained by correctly matching higher moments, as shown in \cite{duoandikoetxea1992moments} (see also the survey \cite{Vazquez2017} for a clear explanation). For $n \le 2$, we do not expect better rates with our technique, as explained in \Cref{sec:L1 relative entropy}. It is an open problem to improve these rates in $n=1,2$ possibly under stronger assumptions on $W$.

We also answer in \Cref{sec:intermediate asymptotics} the question on minimal assumptions on $W$ such that the asymptotic simplification of the system happens with arbitrarily slow rate.

\begin{theorem}
Let $n\ge 2$, $ W \in \mathcal W^{1,\infty} (\Rd)$ such that $W(x) = W(-x)$, $\nabla W \in  L^{n-\ee} (\Rd)$, $\Delta W \in  L^{\frac n 2} (\Rd)$ (and also $\Delta W \in  L^{\frac n 2 - \ee} (\Rd)$ if $n\geq 3$) for some $\ee > 0$, and that $\rho_0 \in L^1_+ (\Rd)$ is such that it satisfies \eqref{eq:rho0 first moment bound}. Then
$ \| \rho(t, \cdot) - K(t, \cdot) \|_{L^1} \to 0$ as $t\to\infty$.
\end{theorem}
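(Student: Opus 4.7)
The plan is to follow the entropy-dissipation strategy used for \Cref{thm:decay} in \Cref{sec:intermediate asymptotics}: work in the rescaled variables \eqref{eq:rhos}, measure the distance from the Gaussian $G$ by the $L^1$ relative entropy $E_1(\rhos\|G)$ of \eqref{eq:L1 entropy}, and combine the logarithmic Sobolev inequality with Csiszar--Kullback. The only structural difference from \Cref{thm:decay} is that the remainder term in the differential inequality for $E_1$ is no longer integrable in $\tau$ (that was the role of the hypothesis $W\in L^1$ which we now drop); it will only tend to zero pointwise, which is still enough to produce $E_1(\rhos\|G)\to 0$ but without a rate.

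First I would verify that the uniform-in-time estimates of \Cref{thm:1} are available under the weaker assumptions. Since $W\in\mathcal W^{1,\infty}(\Rd)$ and $\nabla W\in L^{n-\ee}(\Rd)$, interpolation yields $\nabla W\in L^n(\Rd)$; combined with the assumed $\Delta W\in L^{n/2}(\Rd)$ and $n\ge 2$, item (2) of \Cref{thm:1} applies, so $\sup_{\tau\ge 1}\|\rhos(\tau)\|_{C^\alpha}<\infty$, and the second moment and entropy of $\rhos$ are uniformly bounded in $\tau$. Together with the equi-integrability \eqref{eq_integrability}, these bounds provide the tightness, $L^\infty$ and equicontinuity of $\rhos$ that the rest of the argument will rely on.

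Next I would differentiate $E_1(\rhos(\tau)\|G)$ along \eqref{eq:Fokker-Plack rho scaled}. Integration by parts produces the relative Fisher information $I(\rhos\|G)=\int_\Rd \rhos|\nabla\log\rhos+y|^2\diff y$ plus a remainder involving $\nabla\Ws*\rhos$. The scaling identities $\|\nabla\Ws(\tau)\|_{L^p}=e^{\tau(1-n/p)}\|\nabla W\|_{L^p}$ and $\|\Delta\Ws(\tau)\|_{L^p}=e^{\tau(2-n/p)}\|\Delta W\|_{L^p}$ show that the hypothesis $\nabla W\in L^{n-\ee}$ gives exponential decay of $\|\nabla\Ws(\tau)\|_{L^{n-\ee}}$ with rate $\ee/(n-\ee)$, and for $n\ge 3$ the hypothesis $\Delta W\in L^{n/2-\ee}$ yields analogous decay of $\|\Delta\Ws(\tau)\|_{L^{n/2-\ee}}$. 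Pairing these decays against the uniform $C^\alpha$/$L^\infty$ and second-moment bounds on $\rhos$ by Hölder and Young's convolution inequalities produces, for some remainder $R(\tau)\to 0$, the differential inequality
\[
\frac{d}{d\tau}E_1(\rhos(\tau)\|G)\le -2\,E_1(\rhos(\tau)\|G)+R(\tau),
\]
where the factor $2$ comes from the logarithmic Sobolev inequality $I(\rhos\|G)\ge 2 E_1(\rhos\|G)$. A Grönwall step writing $E_1(\tau)\le e^{-2\tau}E_1(0)+\int_0^\tau e^{-2(\tau-s)}R(s)\diff s$ and applying dominated convergence to the convolution on the right forces $E_1(\rhos(\tau)\|G)\to 0$, so that Csiszar--Kullback gives $\|\rhos(\tau)-G\|_{L^1}\to 0$, which unrescales to the claim.

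The main obstacle is the case $n=2$: there we only have $\Delta W\in L^{n/2}=L^1(\Rd)$, whose rescaled norm $\|\Delta\Ws(\tau)\|_{L^1}$ is scale-invariant and does not decay. To still extract $R(\tau)\to 0$ I would integrate by parts in the $\Delta\Ws$-piece of the remainder so that only $\nabla\Ws$ appears, and then estimate the resulting expression by pairing $\nabla W\in L^{2-\ee}$ (which gives exponential decay of $\|\nabla\Ws(\tau)\|_{L^{2-\ee}}$) against the uniform $\dot H^1$ bound on $\rhos$ from item (1) of \Cref{thm:1}, with the third factor absorbed by Young's convolution inequality into $L^q\cap L^\infty$ norms of $\rhos$ that are already controlled. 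Choosing the Young exponents so that the complementary norm of $\rhos$ lands in $L^1\cap L^\infty$ is the delicate point; once done, the $n=2$ case falls into the same differential inequality as above and the argument closes.
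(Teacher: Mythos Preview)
The proposal is correct and follows essentially the same route as the paper. The paper derives the same differential inequality $\frac{d}{d\tau}E_1(\rhos\|G)\le -2E_1(\rhos\|G)+R(\tau)$ with the remainder split as $J_1+J_2$, bounds these using the decaying norms $\|\nabla\Ws(\tau)\|_{L^{p_1}}$ (and $\|\Delta\Ws(\tau)\|_{L^{p_2}}$ for $n\ge 3$) against the uniform $L^\infty$, $H^1$ and second-moment bounds on $\rhos$, and for $n=2$ handles the $J_2$ term exactly as you propose, writing it as $\int\nabla\rhos\cdot(\nabla\Ws*\rhos)$ so that only $\nabla\Ws$ enters and the $H^1$ bound closes the estimate. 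The only minor difference is that the paper observes $R(\tau)$ actually decays \emph{exponentially} (at a rate $e^{-\delta(\ee)\tau}$), so $E_1$ itself decays exponentially; your Gr\"onwall-plus-splitting argument yields only $E_1\to 0$ without rate, which is all the theorem statement claims but is slightly weaker than what the method in fact delivers.
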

This theorem also works for $n = 1$ under suitable assumptions on $W$ (see \Cref{thm:convergence to the Gaussian L1}). 
Lastly, let us discuss the assumptions $\nabla W \in L^n$ (and $\Delta W \in L^{\frac n 2}$ if $n\ge 2$). A borderline case outside these assumptions is the key example alluded above, $W(x) = \chi  \ln|x|$. The rescaling leads to $\Ws = \chi \ln |y| + \chi \tau $, so $\nabla \Ws$ does not evolve in time. It is easy to see that any solution of
\begin{equation*}
    \ln \rhos + \frac{|y|^2}{2} + \chi \ln |\cdot|  * \rhos = C,
\end{equation*}
for some constant $C$, is a stationary solution for the Fokker-Planck equation \eqref{eq:Fokker-Plack rho scaled} with $\nabla \Ws = \nabla W$, and so the corresponding $\rho$ in original variables is of self-similar form with profile $\rhos$. The existence and uniqueness of these self-similar solutions for subcritical values of $\chi$ was proven in \cite{BDP06,BCC08} by variational methods, moreover they are the intermediate asymptotics for subcritical $\chi$. This explains to some extent how the hypotheses on $W$ are almost sharp for the asymptotic simplification towards the heat kernel profile.

\section{Well-posedness and regularity}
\label{sec:well-posedness}
We make use of the classical approach using Duhamel's formula to obtain sharp well-posedness global in time results, under the assumptions specified below on the potential $W$ 
(see similar results in \cite{Canizo+Carrillo+Schonbek2012}). 
In this section, we use a sub-index $t$ to denote the time variable. 
Using the variation of constants formula we can re-write the problem \eqref{eq:ADE rho} as a fixed-point problem of the form
\begin{align*}
    \rho_t = G_t * \rho_0 + \int_0^t G_{t-s} * \diver \left(  \rho_s  \nabla ( W * \rho_s ) \right) \diff s, 
\end{align*}
or, equivalently, moving derivatives in the convolution 
\begin{equation}
    \label{eq:ADE mild solution}
    \rho_t = G_t * \rho_0 + \int_0^t ( \nabla G_{t-s} ) *   \left ( \rho_s   \nabla ( W * \rho_s ) \right) \diff s. 
\end{equation}
For  two vector fields $F$ and $\overline F$, we denote the component-wise convolution
$
    F*\overline F = \sum_{i=1}^n F_i * \overline F_i .
$
The corresponding formula for \eqref{eq:heat divergence data} is
\begin{equation}
\label{eq:Duhamel u}
    u (t; F) = G_t * \rho_0 + \int_0^t ( \nabla G_{t-s} ) *   F(s) \diff s .
\end{equation}

Below we collect several estimates for the solution $u$ in \eqref{eq:Duhamel u}.

\paragraph{\texorpdfstring{$L^1$}{L1} estimates for \texorpdfstring{$u(t;F)$}{u(t;F)}.} Let us start by obtaining direct basic $L^1$ estimates for $u(t; F)$. We begin by recalling some properties of the heat kernel $G_t$.
Clearly $\|G_t \|_{L^1} = 1$. For integer derivatives
\begin{align*}
    \int_\Rd |D^k G_t|^p \diff x = (2t)^{-\frac{np + kp}{2}}\int_\Rd  |D^k G(\tfrac{x}{\sqrt{2t}})|^p \diff x =  (2t)^{\frac{n - (n+k)p}{2}}\int_\Rd  |D^k G(y)|^p \diff y.
\end{align*}
Then, $\|D^k G_t\|_{L^p}$ is integrable in time for $t$ near zero as long as $p < \frac{n}{n+k-2}$. 
A similar scaling holds in the range of fractional Sobolev spaces $\mathcal W^{s,p}$ (defined in \Cref{sec:fractional Young-Sobolev inequality}) by applying the classical computations presented in \Cref{sec:scaling of fractional Sobolev norm}. In particular,
\begin{equation*}
    \|\nabla G_t \|_{\mathcal W^{s,p}} \le C t^{\frac n {2p} - \frac{n+s}{2}}.
\end{equation*}
With these estimates, we can directly recover $L^1$ estimates for $u(t;F)$ by using Young's inequality
\begin{equation}
\label{eq:heat L1 bound}
\begin{aligned} 
    \|u (t; F) \|_{L^1} &\le \|\rho_0 \|_{L^1} + \int_0^t  \sum_{i=1}^n \left \| \frac{ \partial G_{t-s} }{\partial x_i} \right\|_{L^1} \|F_i (s)\|_{L^1} \diff s \\ 
    &\le \|\rho_0 \|_{L^1} + C \sum_{i=1}^n \sup_{s \in [0,t]} \|F_i (s)\|_{L^1} \int_0^t   (t-s)^{-\frac 1 2}  \diff s 
    \le \|\rho_0 \|_{L^1} + C t^{\frac 1 2} \sum_{i=1}^n \sup_{s \in [0,t]} \|F_i (s)\|_{L^1}.
\end{aligned}
\end{equation}

\paragraph{Continuous dependence with respect to $F$.} Similarly, we can also state a result of {continuous dependence with respect to $F$}
\begin{align*}
    \| u(t; F) - u(t;\overline F) \|_{L^1} &= \left\| \int_0^t ( \nabla G_{t-s} ) *   (F(s) - \overline F(s)) \diff s  \right\|_{L^1} \\
    &\le C \int_0^t (t-s)^{-\frac 1 2} \diff s \sum_{i=1}^n\sup_{t\in[0,t]} \| F_i (s) - \overline F_i (s) \|_{L^1} 
    \le C  t^{\frac 1 2}\sup_{t\in[0,t]} \| F_i (s) - \overline F_i (s) \|_{L^1} .
\end{align*}
Computing the supremum, we recover
\begin{equation}
    \label{eq:heat continuous dependence F}
    \sup_{t \in [0,T]} \| u(t; F) - u(t;\overline F) \|_{L^1} \le C T^{\frac 1 2} \sup_{t \in [0,T]}\| F (s) - \overline F (s) \|_{L^1}.
\end{equation}

\paragraph{Modulus of continuity in time.}  We claim that, if $F$ has a modulus of continuity in $C([0,T];L^1(\Rd))$, it is preserved for $u(t;F)$.
We already know that, for $t > s$
\begin{equation*}
    \|G_t * \rho_0 - G_s *\rho_0\|_{L^1} = \| G_s * (G_{t-s} * \rho_0 - \rho_0) \|_{L^1} \le \|G_{t-s} *  \rho_0 - \rho_0 \|_{L^1} =: \omega_G(t - s; \rho_0).
\end{equation*}
This last element is a modulus of continuity, by the classical result of strong convergence of convolutions.
For the continuity of the second term in \eqref{eq:Duhamel u}, we can write
\begin{align*}
    &\int_0^t ( \nabla G_{t-\tau} ) *   F(\tau) \diff \tau - \int_0^s ( \nabla G_{s-\tau} ) *   F(\tau) \diff \tau  \\
    & \qquad =  \int_0^t ( \nabla G_{t-\tau} ) *   F(\tau) \diff \tau - \int_{t-s}^t ( \nabla G_{t-\tau} ) *   F(\tau - (t-s)) \diff \tau  \\
    &\qquad = \int_0^{t-s} ( \nabla G_{t-\tau} ) *   F(\tau) \diff \tau + \int_{t-s}^t ( \nabla G_{t-\tau} ) * \Big ( F(\tau) -  F(\tau - (t-s)) \Big) \diff \tau.
\end{align*}
On the one hand, we can compute that
\begin{align*}
    \left\| \int_0^{t-s} ( \nabla G_{t-\tau} ) *   F(\tau) \diff \tau \right\|_{L^1} &\le C \sum_{i=1}^n \sup_{\tau \in [0,T]} \|F_i(\tau)\|_{L^1} \int_0^{t-s} (t-\tau)^{-\frac 1 2} \diff \tau \\
    &=  C \left( \sqrt t - \sqrt s \right) \sum_{i=1}^n\sup_{\tau \in [0,T]} \|F_i(\tau)\|_{L^1} 
    \le C \sqrt{t-s}.
\end{align*}
On the other hand, letting $\omega_{F,T}$ be the modulus of continuity of $F$ on $[0,T]$ to $L^1$ we have that
\begin{align*}
    \left\|\int_{t-s}^t ( \nabla G_{t-\tau} ) * \Big ( F(\tau) -  F(\tau - (t-s)) \Big) \diff \tau \right\|_{L^1} &\le C \int_{t-s}^t (t-\tau)^{-\frac 1 2} \sum_{i=1}^n\| F_i(\tau) - F_i(\tau - (t-s))\|_{L^1} \\
    &\le C \omega_{F, T}(t-s) \int_{t-s}^t (t-\tau)^{-\frac 1 2} \diff \tau 
    = C\omega_{F,T}(t-s) \sqrt{s}.
\end{align*}
Hence, Duhamel's formula preserves the continuity, in the sense that
\begin{equation}
\label{eq:modulus continuity heat}
    \| u(t) - u(s) \|_{L^1} \le C\left(  \omega_G(t-s; \rho_0) + \sqrt{t-s} + \sqrt T \omega_{F,T} (t-s) \right) .
\end{equation}

\paragraph{\texorpdfstring{$L^p$}{Lp} estimates for \texorpdfstring{$u(t;F)$}{u(t;F)}.} 
The final result that we need is about the regularisation between {$L^p$ spaces}.
Following a similar procedure as above, we can write
\begin{equation*}
    \| \nabla  G_{t-s} * F (s) \|_{L^p} \le  C \| \nabla  G_{t-s} \|_{L^p} \| F (s) \|_{L^1} \le C  \| F (s) \|_{L^1}(t - s)^{\frac n {2p} - \frac{n+1}{2}},
\end{equation*}
where $(t - s)^{\frac n {2p} - \frac{n+1}{2}}$ is locally integrable in $t$ if $p < \frac{n}{n-1}$. Thus
\begin{equation*}
    F \in C([0,T]; L^1) \implies u (\cdot; F) \in C([\delta, T]; L^p ), \qquad p < \tfrac{n}{n-1}.
\end{equation*}
Analogously, we have
\begin{equation*}
    \| \nabla  G_{t-s} * F (s) \|_{L^r} \le  C \| \nabla  G_{t-s} \|_{L^p} \| F (s) \|_{L^q} \le C  \| F (s) \|_{L^q}(t - s)^{\frac n {2p} - \frac{n+1}{2}} \quad\text{ for }\frac{1}{r}=\frac{1}{q}+\frac{1}{p}-1.
\end{equation*}
Thus for $q \in (1, n)$ we have
\begin{equation}
\label{eq:heat regularisation Lp}
    F \in C([0,T]; L^1) \cap C([\delta, T]; L^q) \implies u (\cdot; F) \in C([2 \delta, T]; L^r ), \qquad r < \frac{n q}{n-q}.
\end{equation}

Now we can obtain our first result of existence and uniqueness for \eqref{eq:ADE rho}, generalising the results of \cite{Canizo+Carrillo+Schonbek2012}, and fitting our current purpose.
\begin{theorem}[Local in time well-posedness]
    \label{thm:existence}
    Given $\rho_0 \in L^1_+ (\mathbb R^n)$ 
    and
    $\nabla W \in L^\infty$ there exists a unique solution $\rho(t)$ in $C([0,T]; L^1 (\Rd))$ for some $T > 0$ of \eqref{eq:ADE rho} in the sense that it satisfies \eqref{eq:ADE mild solution}. 
    The solution 
    has a maximal existence time $T^*$. If $T^* < \infty$ then
\begin{equation}\label{eq_blow_up}
    \lim_{t \to (T^*)^-} \|\rho(t) \|_{L^1}= +\infty.
\end{equation}
Furthermore, let $\rho$ and $\overline \rho$ solutions of \eqref{eq:ADE mild solution} corresponding to initial data $\rho_0$ and $\overline \rho_0$ respectively.
    We have that
    \begin{equation*}
     \sup_{t \in [0,T]} \| \rho_t  - \overline \rho_t \|_{L^1} \le C(T) \| \rho_0 - \overline \rho_0 \|_{L^1} .
\end{equation*}
\end{theorem}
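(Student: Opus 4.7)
The plan is to run a Banach fixed point argument on the Duhamel map
\[
\Phi[\rho]_t \defeq G_t * \rho_0 + \int_0^t \nabla G_{t-s} * \bigl(\rho_s\, \nabla(W*\rho_s)\bigr)\, \diff s
\]
on a closed ball $B_R \defeq \{\rho \in C([0,T]; L^1(\Rd)) : \sup_{t\in[0,T]}\|\rho_t\|_{L^1} \le R\}$ with $R \defeq 2\|\rho_0\|_{L^1}$. The pointwise bound $\|\rho_s\,\nabla(W*\rho_s)\|_{L^1} \le \|\nabla W\|_{L^\infty}\|\rho_s\|_{L^1}^2$ combined with the $L^1$ estimate \eqref{eq:heat L1 bound} yields
\[
\sup_{t\in[0,T]}\|\Phi[\rho]_t\|_{L^1} \le \|\rho_0\|_{L^1} + C\, T^{1/2}\, \|\nabla W\|_{L^\infty}\, R^2 \le R
\]
whenever $T \le T_0 \defeq c\,(\|\nabla W\|_{L^\infty}\, R)^{-2}$ for a small constant $c$. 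The continuity in $t$ of $\Phi[\rho]$ follows from \eqref{eq:modulus continuity heat}, noting that $s\mapsto \rho_s \nabla(W*\rho_s)$ inherits a modulus of continuity in $L^1$ from $\rho$ via the bilinear identity below.

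For the contraction, I would use the telescoping
\[
\rho\,\nabla(W*\rho) - \overline\rho\,\nabla(W*\overline\rho) = (\rho-\overline\rho)\,\nabla(W*\rho) + \overline\rho\,\nabla(W*(\rho-\overline\rho)),
\]
which gives the Lipschitz-in-$L^1$ bound
\[
\|\rho_s\nabla(W*\rho_s) - \overline\rho_s\nabla(W*\overline\rho_s)\|_{L^1} \le \|\nabla W\|_{L^\infty}\bigl(\|\rho_s\|_{L^1}+\|\overline\rho_s\|_{L^1}\bigr)\|\rho_s-\overline\rho_s\|_{L^1}.
\]
Feeding this into the continuous dependence estimate \eqref{eq:heat continuous dependence F} produces
\[
\sup_{t\in[0,T]}\|\Phi[\rho]_t - \Phi[\overline\rho]_t\|_{L^1} \le 2CR\,\|\nabla W\|_{L^\infty}\, T^{1/2}\, \sup_{t\in[0,T]}\|\rho_t-\overline\rho_t\|_{L^1},
\]
which is a strict contraction after possibly further shrinking $T$. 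Banach's theorem yields a unique fixed point of $\Phi$ in $B_R$, i.e.\ a unique mild solution of \eqref{eq:ADE rho} on $[0,T]$. Uniqueness in the full space $C([0,T];L^1)$ (not just in $B_R$) follows by a standard Grönwall-type argument: given any two solutions, apply the same bilinear estimate on subintervals and iterate.

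Continuous dependence on the initial datum is obtained by the same bilinear estimate applied to the difference: from
\[
\rho_t - \overline\rho_t = G_t*(\rho_0-\overline\rho_0) + \int_0^t \nabla G_{t-s}*\bigl(\rho_s\nabla(W*\rho_s) - \overline\rho_s\nabla(W*\overline\rho_s)\bigr)\diff s,
\]
taking $L^1$ norms and using $\|\nabla G_{t-s}\|_{L^1}\lesssim (t-s)^{-1/2}$ yields a singular Grönwall inequality of the form
\[
\|\rho_t-\overline\rho_t\|_{L^1} \le \|\rho_0-\overline\rho_0\|_{L^1} + C\int_0^t (t-s)^{-1/2}\|\rho_s-\overline\rho_s\|_{L^1}\diff s,
\]
(with $C$ depending on $\sup_{[0,T]}\|\rho\|_{L^1}$, $\sup_{[0,T]}\|\overline\rho\|_{L^1}$, and $\|\nabla W\|_{L^\infty}$) whose iteration gives the claimed estimate with $C(T)$ exponential in $T$.

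Finally, for the blow-up criterion: the local existence time $T_0$ obtained above depends only on $\|\rho_0\|_{L^1}$ and $\|\nabla W\|_{L^\infty}$. Defining $T^*$ as the supremum of existence times, suppose $T^* < \infty$ and $\liminf_{t\to (T^*)^-}\|\rho(t)\|_{L^1} = M < \infty$. Picking $t_0$ close to $T^*$ with $\|\rho(t_0)\|_{L^1} \le 2M$, the local theorem applied with initial datum $\rho(t_0)$ extends the solution by a uniform time $T_0(M,\|\nabla W\|_{L^\infty})>0$ beyond $t_0$, which for $t_0$ close enough to $T^*$ exceeds $T^*$ and contradicts maximality; hence \eqref{eq_blow_up} holds. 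The main technical subtlety, in my view, is keeping track of the space in which the argument is closed: one has to verify that $s\mapsto \rho_s\nabla(W*\rho_s)$ is continuous with values in $L^1$ so that \eqref{eq:modulus continuity heat} actually applies within the fixed-point iteration, but this is immediate from the continuity of $\rho$ and the bilinear Lipschitz bound above.
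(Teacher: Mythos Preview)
Your proof is correct and follows essentially the same route as the paper: Banach fixed point on a ball in $C([0,T];L^1)$ using the bilinear Lipschitz estimate $\|\rho\nabla(W*\rho)-\overline\rho\nabla(W*\overline\rho)\|_{L^1}\le(\|\rho\|_{L^1}+\|\overline\rho\|_{L^1})\|\nabla W\|_{L^\infty}\|\rho-\overline\rho\|_{L^1}$ together with the Duhamel estimates \eqref{eq:heat L1 bound}, \eqref{eq:heat continuous dependence F}, \eqref{eq:modulus continuity heat}. The only cosmetic differences are that the paper takes the ball of radius $\|\rho_0\|_{L^1}+1$ rather than $2\|\rho_0\|_{L^1}$, and for continuous dependence the paper iterates the short-time estimate $\sup_{[0,T]}\|\rho-\overline\rho\|_{L^1}\le(1-C\|\nabla W\|_{L^\infty}T^{1/2})^{-1}\|\rho_0-\overline\rho_0\|_{L^1}$ on successive subintervals rather than invoking a singular Gr\"onwall inequality; both yield the same result.
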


\begin{proof} 
 We apply Banach's fixed-point theorem in $X = C([0,T]; Y)$, where $Y = \{ \rho \in L^1 (\Rd) : \| \rho \|_{L^1} \le \| \rho_0 \|_{L^1} + 1 \}$, 
to the solutions of $u_t - \Delta u = \diver F$ and $F = \rho \nabla W * \rho$. Hence, 
we define an operator $\mathcal F$ through the right-hand side of \eqref{eq:ADE mild solution}, i.e.
\begin{equation}
    \mathcal F[\rho] (t) = u(t; \rho \nabla W * \rho ). 
\end{equation}
We first point out that, by Young's convolution inequality
\begin{equation}
\label{eq:rho nabla W * rho respect to rho}
\begin{aligned}
    \| \rho (\nabla W * \rho)&  - \overline \rho (\nabla W * \overline \rho) \|_{L^1} \\
    &\le \| \rho (\nabla W * \rho)  - \rho (\nabla W * \overline \rho) \|_{L^1} + \| \rho (\nabla W * \overline \rho)  - \overline \rho (\nabla W * \overline \rho) \|_{L^1}\\
    &\le \|\rho\|_{L^1} \| \nabla W * (\rho  - \overline \rho) \|_{L^\infty} + \| \rho - \overline \rho\|_{L^1} \| \nabla (W * \overline \rho)\|_{L^\infty} \\
    &\le \|\rho\|_{L^1} \| \nabla W \|_{L^\infty}  \| \rho  - \overline \rho \|_{L^1} + \| \rho - \overline \rho\|_{L^1} \| \nabla  W \|_{L^\infty} *  \| \overline \rho\|_{L^1}\\
    &\le ( \| \rho \|_{L^1} + \| \overline \rho \|_{L^1} ) \|\nabla W\|_{L^\infty} \| \rho - \overline \rho \|_{L^1}.
\end{aligned}
\end{equation}
This means, on the one hand that it does not reduce the modulus of continuity in time of $\rho$, since
\begin{equation}
\label{eq:modulus of continuity of rho nabla W * rho}
\begin{aligned}
    \| \rho(t) (\nabla W * \rho(t))&  - \rho(s) (\nabla W * \rho(s)) \|_{L^1} \le ( \| \rho (t) \|_{L^1} + \|  \rho (s) \|_{L^1} ) \|\nabla W\|_{L^\infty} \| \rho(t) - \rho(s) \|_{L^1}.
\end{aligned}
\end{equation}
We check that $\mathcal F: X \to X$ by joining \eqref{eq:modulus of continuity of rho nabla W * rho} with \eqref{eq:modulus continuity heat} and \eqref{eq:heat L1 bound} for $T$ small enough.
Let us now show that $\mathcal F$ is contractive for $T$ small enough. Pick $\rho, \overline \rho \in X$. Due to \eqref{eq:rho nabla W * rho respect to rho} and \eqref{eq:heat continuous dependence F}
\begin{equation*}
    \| \mathcal F [\rho] - \mathcal F[\overline \rho] \|_X \le C T^{\frac 1 2} \| \rho \nabla W * \rho - \overline \rho \nabla W * \overline \rho \|_X \le C ( \| \rho \|_X + \| \overline \rho \|_X ) T^{\frac 1 2} \|\nabla W \|_{L^\infty} \| \rho - \overline \rho \|_X.
\end{equation*}
We can select
$T > 0$ small so that there is a contraction. 
Lastly, let us show the continuous dependence. With a similar argument as above we obtain that
\begin{equation*}
    \| \rho_t  - \overline \rho_t \|_{L^1} \le \| \rho_0 - \overline \rho_0 \|_{L^1} + C T ^{\frac 1 2 } \|\nabla W \|_{L^\infty} \sup_{s \in [0,T] } \|\rho_s - \overline \rho_s \|_{L^1}.
\end{equation*}
Hence, for $T$ small enough that $C T ^{\frac 1 2 } \|\nabla W \|_{L^\infty} < 1$,  
\begin{equation*}
    \sup_{t \in [0,T]} \| \rho_t  - \overline \rho_t \|_{L^1} \le \frac{ 1 }{ 1 - C \| \nabla W \|_{L^\infty} T^{\frac 1 2}} \| \rho_0 - \overline \rho_0 \|_{L^1} .
\end{equation*}
Since $C$ does not depend on $\rho$, this argument can be applied iteratively to deduce the result.
\end{proof} 
A similar argument provides continuous dependence on $\nabla W$.
The next theorem is our main result in this section. We will apply a bootstrap argument to show the solution $\rho$ in \Cref{thm:existence} is in $C((0,T^*);\mathcal{W}^{s,p}(\mathbb{R}^n))$ during its existence for any $s>0$ and $p\in [1,\infty)$, and in fact we have $T^*=\infty$, i.e. the solution is global in time. Once the regularity in space is shown, we immediately obtain the regularity of $\rho$ in time by passing it through the equation \eqref{eq:ADE rho}, thus $\rho$ is a classical solution of \eqref{eq:ADE rho}.

\begin{theorem}[Global in time solutions and instant regularisation]
    \label{thm:instant regularisation}
    Let $W \in \mathcal W^{1,\infty} (\mathbb R^n)$. Then the solution constructed in \Cref{thm:existence} is defined for all $T > 0$ and it satisfies
    \begin{equation*}
        \rho(t) \ge 0, \qquad \int_\Rd \rho(t) = \int_\Rd \rho_0,
    \end{equation*}
    and, 
    $\rho \in C ((0, T]; \mathcal W^{k,p} (\Rd))$, for any $k \in \mathbb N$ and $p \in [1,\infty]$. 
    Furthermore, $\rho$ is a classical solution defined for all $t > 0$. 
    In fact, if in addition $\rho_0 \in \mathcal W^{s,p} (\Rd)$, then $\rho \in C([0,T]; \mathcal W^{s,p} (\Rd))$ for any $s \ge 0$ and $p\in [1,\infty]$.
\end{theorem}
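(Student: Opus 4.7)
The plan is to establish positivity and mass conservation first, which together with the blow-up criterion of \Cref{thm:existence} yields global existence, and then bootstrap the spatial regularity of the mild solution through integer and fractional Sobolev scales via Duhamel's formula, finishing with the classical character of $\rho$ and continuity at $t=0$ when the initial datum is already regular.

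For mass conservation, I integrate \eqref{eq:ADE mild solution} in space: the first term contributes $\|\rho_0\|_{L^1}$, while each $\nabla G_{t-s}*F(s)$ integrates to zero as the divergence of a summable vector field (whose integrability is guaranteed by $\nabla W \in L^\infty$ and $\rho(s) \in L^1$). For positivity, I use an approximation argument: take $\rho_0^\ee \in C_c^\infty(\Rd)$ with $\rho_0^\ee \ge 0$ and $\rho_0^\ee \to \rho_0$ in $L^1$. Granting the instant regularisation proved below, the corresponding solution $\rho^\ee$ is smooth and solves the linear drift-diffusion equation $\partial_t \rho^\ee = \Delta \rho^\ee + \nabla \cdot (\rho^\ee b^\ee)$ with $b^\ee := \nabla W * \rho^\ee \in L^\infty$ (since $\nabla W \in L^\infty$ and $\|\rho^\ee(t)\|_{L^1}$ is conserved). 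The classical parabolic maximum principle yields $\rho^\ee \ge 0$, and passing to the limit via the $L^1$ continuous dependence from \Cref{thm:existence} transfers positivity to $\rho$. Combining $\rho(t) \ge 0$ with mass conservation gives $\|\rho(t)\|_{L^1} = \|\rho_0\|_{L^1}$ on the maximal interval $[0, T^*)$, contradicting \eqref{eq_blow_up} unless $T^* = \infty$.

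For the integer-$L^p$ bootstrap, the central estimate is
\[
\|F(s)\|_{L^p} = \|\rho(s)(\nabla W * \rho(s))\|_{L^p} \le \|\nabla W\|_{L^\infty}\|\rho(s)\|_{L^1}\|\rho(s)\|_{L^p},
\]
so $F$ inherits the spatial integrability of $\rho$ with a constant depending only on $\|\nabla W\|_{L^\infty}\|\rho_0\|_{L^1}$. Starting from $\rho \in C([0,T]; L^1)$, \eqref{eq:heat regularisation Lp} upgrades it to $\rho \in C([\delta,T]; L^{q_1})$ for any $q_1 < n/(n-1)$; iterating on shrinking intervals $[k\delta, T]$ with $q_{k+1} < nq_k/(n-q_k)$ reaches any $p \in (1,\infty)$ in finitely many steps, and a final application with $q>n$ yields $L^\infty$. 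Once $\rho \in L^1 \cap L^\infty$, I bootstrap in the fractional Sobolev scale using the kernel bound $\|\nabla G_t\|_{\mathcal W^{s,p}} \le C t^{n/(2p) - (n+s)/2}$ and the fractional Young inequality of \Cref{sec:fractional Young-Sobolev inequality}, together with a fractional Leibniz product estimate of the schematic form $\|F\|_{\mathcal W^{s,p}} \le C_W (\|\rho\|_{L^\infty} + \|\rho\|_{L^1})\|\rho\|_{\mathcal W^{s,p}}$; choosing exponents so that $n/(2p) - (n+s)/2 > -1$ keeps the kernel integrable in $s$ near $s=t$ and lifts $\rho$ into $\mathcal W^{s,p}$ on $[\delta, T]$. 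Iterating produces $\rho \in C((0,T]; \mathcal W^{k,p})$ for every $k \in \mathbb N$ and $p \in [1,\infty]$.

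Sobolev embedding then gives $\rho \in C((0,T]; C^\infty_b)$, so $\partial_t\rho$ is continuous with values in $C^\infty_b$ via \eqref{eq:ADE rho}, making $\rho$ a classical solution. If additionally $\rho_0 \in \mathcal W^{s,p}$, the $C_0$-semigroup property of the heat semigroup on $\mathcal W^{s,p}$ gives $G_t * \rho_0 \in C([0,T]; \mathcal W^{s,p})$, and the same time-integrability bound shows that the Duhamel integral is continuous on $[0,T]$ and vanishes at $t=0$, yielding $\rho \in C([0,T]; \mathcal W^{s,p})$. The main technical obstacle is the fractional step: the product estimate and the kernel bound must close simultaneously at arbitrary fractional regularity $s$, which is precisely the purpose of the dedicated fractional Young-Sobolev inequality of \Cref{sec:fractional Young-Sobolev inequality}; by contrast, the positivity/mass and integer-$L^p$ steps are comparatively standard.
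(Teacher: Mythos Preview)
Your proposal is correct and follows the same overall strategy as the paper: $L^p$ bootstrap via \eqref{eq:heat regularisation Lp}, then a fractional Sobolev bootstrap combining the kernel bound $\|\nabla G_t\|_{\mathcal W^{s,p}}$ with a fractional Young inequality and a product estimate on $F=\rho\,\nabla W*\rho$, and finally time regularity read off from the equation. Two points of difference are worth flagging. First, for positivity the paper does not approximate the data: it works directly on the constructed solution, differentiating $\int j_\ee(\rho)$ for smooth convex approximations of $j(s)=\max\{-s,0\}$ and using that $\Delta(W*\rho)\in L^\infty$ once the regularity bootstrap (performed first, on the maximal interval) is available; your approximation-plus-maximum-principle route is equally valid but adds a layer. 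Second, your ``schematic'' product estimate is exactly the delicate point: since only $\nabla W\in L^\infty$ is assumed, $\nabla W*\rho$ lives in $L^\infty$-based spaces, and the standard Brezis--Mironescu product rule (which requires a finite exponent on both factors) does not apply. The paper isolates this as a separate lemma (\Cref{lemma2.3}) bounding $[fg]_{\mathcal W^{\theta s,p}}$ when $f\in C^s$ and $g\in \mathcal W^{\theta s,p}$; the fractional Young inequality in \Cref{sec:fractional Young-Sobolev inequality} handles the convolution with $\nabla G_{t-s}$, not the product, so be careful not to conflate the two tools.
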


Before presenting the proof, let us first introduce some preliminaries. The proof of the regularity result is based on an iteration argument in fractional Sobolev spaces $\mathcal{W}^{s,p}$, whose definition and basic properties can be found in \Cref{sec:fractional Young-Sobolev inequality}. 
The reason to use fractional spaces is that our iterative scheme does not seem to be able to jump between $ \rho  \in C( (0,T^*) , L^\infty )$ and $ u( \cdot ; \rho  (\nabla W*\rho) ) \in C((0,T^*) , \mathcal W^{1,1})$, but we can gain fractional regularities to bridge the integer gap.

In each step of the iteration, assuming that $\rho\in C((0,T]; \mathcal{W}^{s,p})$ for certain $s\geq 0$, $p\in[1,\infty)$, we aim to use the formula \eqref{eq:Duhamel u} to upgrade the regularity to a higher order. This will be done by controlling the fractional Sobolev norm of $\nabla G_{t-s} * F(s)$, where $F(s)=\rho(s)(\nabla W*\rho(s))$. The following two key ingredients will be used in this estimate:

\begin{enumerate}
    \item
To obtain estimates on fractional Sobolev norms of a convolution, we need a Young's inequality between fractional Sobolev spaces. We could not locate such a result in the literature, so we provide a proof in \Cref{thm:fractional Young inequality}, which might be of independent interest.

\item
In order to control the fractional Sobolev norms of $F(s)=\rho(s)(\nabla W*\rho(s))$ itself, we need a product estimate in fractional Sobolev spaces. 
An estimate of this kind
was obtained by Brezis and Mironescu \cite{Brezis2001}: 
\begin{equation}\label{ineq_product}
    \|f g \|_{\mathcal W^{\theta s, p}} \le C ( \| f \|_{L^\infty} \| g \|_{\mathcal W^{\theta s,p}} + \| g \|_{L^r} \|f \|_{\mathcal W^{s,t}} ^\theta \|f \|_{L^\infty}^{1-\theta}),
\end{equation}
where $p,r,t \in (1,\infty)$, $s \in (0,\infty)$, $\theta \in (0,1) $ are such that 
$ \frac 1 r + \frac \theta t = \frac 1 p.$
However, a delicate issue is that we only assume $\nabla W \in L^\infty$ in this section, thus $\nabla W*\rho(s)$ can only belong to $L^\infty$-based spaces (such as $C^s=\mathcal{W}^{s,\infty}$).
In particular, it is impossible to show it belongs to $W^{s,p}$ for any $p<\infty$. For this reason, we could not apply \eqref{ineq_product} since it requires $p,t < \infty$. 

In the following lemma, we derive a product estimate for the fractional Sobolev norm of $fg$ where $f\in \mathcal{W}^{s,p}$ and $g\in C^s$. It can be seen as a minor generalisation of \eqref{ineq_product} with $t=\infty$, and we give a short direct proof.

\end{enumerate}

\begin{lemma} \label{lemma2.3}
Let $p \in [1,\infty)$, $s, \theta \in [0,1)$. If  $f \in C^s$ and $g \in W^{\theta s, p}$, then $f g \in W^{\theta s, p}$, and we have the estimate
\begin{equation}
\label{eq:Sobolev norm of a product}
    [f g]_{\mathcal W^{\theta s, p}} \le C(p,s,\theta) ( \| f \|_{L^\infty} [ g ]_{\mathcal W^{\theta s,p}} + \| g \|_{L^p} \| f \|_{C^{s}} )
\end{equation}
where $\| f \|_{C^{s}} = [f]_{C^s} + \| f \|_{L^\infty}$. 
\end{lemma}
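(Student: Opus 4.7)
The plan is to work directly with the Gagliardo seminorm representation
\[
[h]_{\mathcal W^{\theta s, p}}^{p} = \int_{\Rd}\int_{\Rd} \frac{|h(x)-h(y)|^{p}}{|x-y|^{n+\theta s p}} \diff x \diff y,
\]
which, up to a harmless constant, is an equivalent norm on $\mathcal W^{\theta s,p}$ for $\theta s\in(0,1)$. The engine of the proof is the elementary product identity
\[
f(x)g(x) - f(y)g(y) = f(x)\bigl(g(x)-g(y)\bigr) + g(y)\bigl(f(x)-f(y)\bigr),
\]
together with $|a+b|^{p}\le 2^{p-1}(|a|^{p}+|b|^{p})$. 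The first term on the right contributes at most $2^{p-1}\|f\|_{L^{\infty}}^{p}[g]_{\mathcal W^{\theta s,p}}^{p}$ to the double integral, which is exactly the first piece on the right-hand side of \eqref{eq:Sobolev norm of a product}; so all the work concentrates on the cross-term involving $g(y)(f(x)-f(y))$.

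To control that cross-term, I would split the $x$-integration according to the distance $|x-y|$. On the near region $\{|x-y|\le 1\}$, the Hölder bound $|f(x)-f(y)|\le [f]_{C^{s}}|x-y|^{s}$ reduces the radial integral to $\int_{|z|\le 1}|z|^{sp(1-\theta)-n}\diff z$, which is finite since $sp(1-\theta)>0$ under $s>0$ and $\theta<1$. On the far region $\{|x-y|>1\}$, the trivial bound $|f(x)-f(y)|\le 2\|f\|_{L^{\infty}}$ leaves $\int_{|z|>1}|z|^{-n-\theta s p}\diff z$, which converges because $\theta s p>0$. In both pieces the remaining $y$-integral is just $\|g\|_{L^{p}}^{p}$, so the cross-term is controlled by
\[
C\bigl([f]_{C^{s}}^{p}+\|f\|_{L^{\infty}}^{p}\bigr)\|g\|_{L^{p}}^{p} \le C\|f\|_{C^{s}}^{p}\|g\|_{L^{p}}^{p}.
\]
Summing the two contributions and taking $p$-th roots yields \eqref{eq:Sobolev norm of a product}.

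I do not anticipate any genuine obstacle beyond this bookkeeping: the whole mechanism rests on the Gagliardo kernel being integrable both near the diagonal (where the $C^{s}$ modulus supplies the extra $|x-y|^{sp}$) and away from it (where $\theta s p>0$ suffices on its own). The degenerate cases $s=0$ or $\theta=0$ collapse to the $L^{p}$ estimate $\|fg\|_{L^{p}}\le\|f\|_{L^{\infty}}\|g\|_{L^{p}}$ and can be absorbed with no extra work. The reason this argument escapes the $t<\infty$ restriction in the Brezis–Mironescu estimate \eqref{ineq_product} is that $f$ is never required to live in an $L^{t}$-based fractional space: only the pointwise Hölder modulus of $f$ is ever used, so the endpoint $t=\infty$ is naturally accommodated.
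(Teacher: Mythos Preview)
Your proposal is correct and follows essentially the same route as the paper's own proof: both use the product identity $f(x)g(x)-f(y)g(y)=f(x)(g(x)-g(y))+g(y)(f(x)-f(y))$ (the paper with the roles of $x,y$ swapped in the coefficients, which is immaterial), bound the first piece by $\|f\|_{L^\infty}^p[g]_{\mathcal W^{\theta s,p}}^p$, and treat the second piece by splitting the integral into $\{|x-y|<1\}$ (using $[f]_{C^s}$) and $\{|x-y|\ge 1\}$ (using $2\|f\|_{L^\infty}$), with the degenerate cases $s=0$ or $\theta=0$ handled trivially.
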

\begin{proof}
    If $s=0$ or $\theta=0$, clearly $\|f g \|_{L^p} \le \|f \|_{L^\infty} \|g \|_{L^p}$. For $s,\theta\in(0,1)$, we write
    \begin{align*}
        &\int_\Rd \!\int_\Rd \! \frac{|f(x)g(x) - f(y) g(y)|^p}{|x-y|^{n+\theta s p}} \le C(p) \int_\Rd \! \int_\Rd \! \frac{|f(y)|^p|g(x) - g(y)|^p}{|x-y|^{n+\theta s p}} + C(p) \int_\Rd \! \int_\Rd \!\frac{|f(x) - f(y)|^p |g(x)|^p}{|x-y|^{n+\theta s p}} \\
        & \le C(p) \|f \|_{L^\infty}^p [g]_{W^{\theta s,p}}^p + C(p) \int_\Rd |g(x)|^p \left(\int_{\{|y-x| < 1\}}\!\! \frac{[f]_{C^s}^p|x-y|^{sp} }{|x-y|^{n+\theta s p}} \diff y + \int_{\{|y-x| \ge 1\}} \frac{2^p \| f \|_{L^\infty}^p }{|x-y|^{n+\theta s p}} \diff y\right) \diff x.
    \end{align*}
    Since $\int_{|y| < 1} |y|^{-n+(1-\theta)sp} \diff y , \int_{|y| \ge 1} |y|^{-n-\theta sp} \diff y < C(p,s,\theta)$ we conclude the result.
\end{proof}
We now have all the machinery needed for the proof of the main result of this section.
\begin{proof}[Proof of \Cref{thm:instant regularisation}] 
Iterating in \eqref{eq:heat regularisation Lp} and using Young's inequality, we get
$\rho \in C( (0,T^*) ; L^p (\Rd))$ for any $p \in [1,\infty)$. To recover higher regularity we pass through fractional Sobolev spaces.
We begin by proving some further regularity estimates for $u(t;F)$. Applying \Cref{thm:fractional Young inequality}, we have that
\begin{equation*}
    \| \nabla  G_{t-\tau} * F (s) \|_{\mathcal W^{\gamma,r}} \le  C \| \nabla  G_{t-\tau} \|_{\mathcal W^{\alpha,p}} \| F (s) \|_{\mathcal W^{\beta,q}} \le C  \| F (s) \|_{\mathcal W^{\beta,q}}(t - \tau)^{\frac n {2p} - \frac{n+1+\alpha}{2}}
\end{equation*}
where $\gamma = \alpha + \beta$. The time term is integrable if
$
    1 \le p < \frac{n}{n+\alpha-1} .
$
Hence, necessarily $\alpha < 1$, and we deduce that
\begin{equation}
\label{eq:heat regularisation fractional Sobolev}
\begin{aligned} 
    F \in C([0,T]; L^1 (\Rd)) &\cap C([\delta, T]; \mathcal W^{\beta,q}(\Rd)) \implies u (\cdot; F)  \in C([ \delta , T]; \mathcal W^{\gamma,r}(\Rd) ), \\
    & \text{where } \alpha < 1, \quad   1 \le p < \tfrac{n}{n+\alpha-1}, \quad  \tfrac 1 r + 1 = \tfrac 1 p + \tfrac 1 q, \quad \gamma = \alpha + \beta .
\end{aligned} 
\end{equation}
Applying \eqref{eq:heat regularisation fractional Sobolev} with $\beta = 0, p = 1, \alpha \in (0,1), q = r \in [1,\infty)$ we recover that $\rho \in C((0,T^*); W^{\alpha,q} (\Rd))$.

Let us reinterpret \eqref{eq:Sobolev norm of a product} for $f = \frac{\partial }{\partial x_i} (W*\rho)$ and $g = \rho$. For $s, \theta \in (0,1)$, applying %
\Cref{lemma2.3}, 
we have that
\begin{equation}
\label{eq:existence Wsp estimate 1}
\begin{aligned}
    \left[\rho \tfrac{\partial W}{\partial x_i} * \rho  \right]_{\mathcal W^{\theta s, p}} 
    &\le  
    C 
    \left (
        \left\| \tfrac{\partial W}{\partial x_i} *\rho   \right\|_{L^\infty} [ \rho ]_{\mathcal W^{\theta s,p}} 
        + \| \rho \|_{L^p} \|\tfrac{\partial W}{\partial x_i} *\rho \|_{C^s} \right ) \\
    &\le 
    C \left (
        \left\| \tfrac{\partial W}{\partial x_i}   \right\|_{L^\infty} [ \rho ]_{\mathcal W^{\theta s,p}} 
        + \| \rho \|_{L^p}  \| \rho \|_{\mathcal W^{s,1}}  \|\tfrac{\partial W}{\partial x_i}  \|_{L^\infty} \right ).
\end{aligned}
\end{equation}
Using the standard Young inequality
\begin{equation}
\label{eq:existence Wsp estimate 2}
    \left\|\rho \tfrac{\partial W}{\partial x_i} * \rho  \right\|_{L^p} \le  \left\|\rho \right\|_{L^p} \left\| \tfrac{\partial W}{\partial x_i} * \rho  \right\|_{L^\infty} \le \left\|\rho \right\|_{L^p}\left\|\rho \right\|_{L^1} \left\| \tfrac{\partial W}{\partial x_i}   \right\|_{L^\infty}.
\end{equation}
Then, we have that
\begin{equation*}
     F = \rho \nabla W * \rho \in C([\delta,T] , W^{ s, p  } (\Rd)), \qquad \forall s \in (0,1), p \in [1,\infty) .
\end{equation*}
This allows us to show, applying again \eqref{eq:heat regularisation fractional Sobolev}, that $\rho \in C((0,T^*); W^{2s,p} (\Rd))$ for $s \in (0,1)$ and $p \in [1,\infty)$. 
We can repeat the argument for $s \in (1,2)$ by noticing that $\frac{\partial}{\partial x_j} (\rho \tfrac{\partial W}{\partial x_i} * \rho) = \frac{\partial \rho }{\partial x_j} \tfrac{\partial W}{\partial x_i} * \rho + \rho \tfrac{\partial W}{\partial x_i} * \frac{ \partial \rho}{\partial x_j}$, and the reasoning above works in each element. Similar formulas hold for higher derivatives of $F$, and hence the argument can be extended to any $s > 0$.
Once we have space regularity, through \eqref{eq:ADE rho} time regularity follows. 

It remains to show that the solution is global in time. Towards this end, we will show that $\rho_-(\cdot,t) \equiv 0$ for all $t \in [0,T^*)$. For a smooth and convex function $j:\mathbb{R}\to\mathbb{R}$, we can write
\begin{align}
   \nonumber \frac{d}{dt} \int_{\mathbb R^n} j( \rho(x,t) ) \diff x &= - \int_{\mathbb R^n} j'' (\rho(t)) \Big( |\nabla \rho(t)|^2 + \rho \nabla \rho \cdot \nabla (W * \rho) \Big) \diff x \\
    &\le \int_{\mathbb R^n} \left( \int_0^{\rho(x,t)} j''(s) s \diff s \right) \Delta (W * \rho)(x) \diff x.\label{eq_positivity}
\end{align}
Let us approximate the convex (but non-smooth) function $j(s)=\max\{-s,0\}$ by a sequence of smooth convex functions $\{j_\ee\}_{\ee>0}$, where $j_\ee=j$ in $[-\ee,0]^c$ (so $j_\ee''\equiv 0$ in $[-\ee,0]^c$) and satisfies $0\leq j_\ee''\leq 2\ee^{-1}$ in $[-\ee,0]$.
Hence $J_\ee(s) := \int_0^s j_\ee''(\sigma) \sigma \diff \sigma$ satisfies $|J_\ee(s)|\leq |s|$ for all $0<\ee<1$, and 
$\lim_{\ee\to 0^+}J_\ee(s)=0$ for all $s$. 
Since $\Delta (W * \rho) \in L^\infty (\Rd)$ for $t > 0$, sending $\ee\to 0^+$ and applying the dominated convergence theorem to the right hand side of \eqref{eq_positivity} gives $\rho_-(\cdot,t)\equiv 0$ during its existence.
Hence, $\|\rho(t)\|_{L^1} = \| \rho_0 \|_{L^1}$ for all $t \in [0,T^*)$, and due to the blow-up criteria \eqref{eq_blow_up} we know there is no blow-up in finite time, that is, $T^* = +\infty$.

When $\rho_0 \in L^1 (\Rd) \cap \mathcal W^{s,p} (\Rd)$, we want to extend the regularity to $ \rho \in C([0,T]; L^1 (\Rd) \cap \mathcal W^{s,p} (\Rd))$. The first step is to notice that \eqref{eq:heat regularisation fractional Sobolev} works also for $\delta = 0$. Since \eqref{eq:existence Wsp estimate 1} and \eqref{eq:existence Wsp estimate 2} are point-wise in $t$, they hold up to $t = 0$. And thus the result is proven. 
\end{proof}

\section{Sharp decay of the free energy and the entropy}
\label{sec:decay free energy}
First, we give the sharp decay rate of the free energy functional in original variables $E(t)$ given by \eqref{def_E} for a bounded interaction potential $W$. From now on, we will always assume that the interaction potential $W$ is even without specifying it.

\begin{lemma}
    \label{thm:free energy decay}
    Assume $W \in \mathcal W ^{1,\infty} (\Rd)$,
    and $\rho_0\in L^1_+(\mathbb{R}^n)$ satisfy $\int_\Rd \rho_0 \diff x = 1$ and $E[\rho_0]<\infty$, as introduced in \eqref{def_E}. 
    Then there exists a constant $c > 0$ depending on $\|W\|_{L^\infty}$ and $n$, such that
    \begin{equation} \label{ineq_E}
	E[\rho(t)] \le - \frac n 2 \log \left ( ct+ e^{ - \frac 2 n E[\rho_0]}\right )\quad\text{ for all }t\geq 0.
\end{equation} 
\end{lemma}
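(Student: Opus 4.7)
The plan is to derive the differential inequality $E'(t)\leq -c(n,\|W\|_{L^\infty})\,e^{(2/n)E(t)}$, whose direct integration yields the statement: setting $y(t):=e^{-(2/n)E(t)}$ the inequality becomes $y'(t)\geq 2c/n$, so $y(t)\geq e^{-(2/n)E[\rho_0]}+(2c/n)t$, and taking $-\tfrac{n}{2}\log$ gives exactly the claimed bound. Since \eqref{eq:ADE rho} is the $2$-Wasserstein gradient flow of $E$, and the solution is smooth for $t>0$ by \Cref{thm:instant regularisation}, a standard integration by parts yields the dissipation identity
$$\frac{dE}{dt}=-D[\rho],\qquad D[\rho]\defeq\int_{\Rd}\rho\,\bigl|\nabla\log\rho+\nabla W*\rho\bigr|^2\diff x,$$
and the whole argument reduces to establishing the entropy--entropy-dissipation inequality $D[\rho]\geq c(n,M)\,e^{(2/n)E[\rho]}$ with $M:=\|W\|_{L^\infty}$.

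The key step is the substitution $\tilde\rho := \rho\,e^{W*\rho}$, whose pointwise bounds $e^{-M}\rho\leq\tilde\rho\leq e^{M}\rho$ yield $Z:=\int\tilde\rho\in[e^{-M},e^{M}]$. Using $\log\tilde\rho=\log\rho+W*\rho$, the dissipation rewrites as
$$D[\rho]=\int e^{-W*\rho}\,\frac{|\nabla\tilde\rho|^2}{\tilde\rho}\diff x\geq e^{-M}\,I[\tilde\rho],\qquad I[\tilde\rho]:=\int\frac{|\nabla\tilde\rho|^2}{\tilde\rho}\diff x,$$
and, crucially, this lower bound involves no derivatives of $W$, consistently with the fact that $c$ depends only on $\|W\|_{L^\infty}$. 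Applying the Euclidean logarithmic Sobolev inequality (equivalently, Stam's inequality) to the probability density $\hat\rho:=\tilde\rho/Z$, in the form $I[\hat\rho]\geq 2n\pi e\,e^{(2/n)H[\hat\rho]}$, together with $I[\tilde\rho]=Z\cdot I[\hat\rho]$ and $Z\geq e^{-M}$, gives
$$D[\rho]\geq 2n\pi e\cdot e^{-2M}\cdot e^{(2/n)H[\hat\rho]}.$$
The problem thus reduces to a lower bound of the form $H[\hat\rho]\geq E[\rho]-C(n,M)$.

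The principal difficulty lies in this last comparison. Expanding $\log\hat\rho=\log\rho+W*\rho-\log Z$ and using $\int\hat\rho=\int\rho=1$ with $|W*\rho|,|\log Z|\leq M$, the matter reduces to a lower bound on $\int(\hat\rho-\rho)\log\rho\,\diff x$. Since the ratio $\hat\rho/\rho=e^{W*\rho}/Z\in[e^{-2M},e^{2M}]$ only controls $\hat\rho-\rho$ multiplicatively, and $t\mapsto t\log t$ is neither bounded nor Lipschitz, a naive estimate produces an expression involving $\int\rho|\log\rho|\,\diff x$, which is not a priori controlled by $E[\rho_0]$ alone. I plan to circumvent this by exploiting the mean-zero property $\int(\hat\rho-\rho)\diff x=0$, the convexity inequality $e^{\lambda}\geq 1+\lambda$ applied to $\lambda=W*\rho-\log Z\in[-2M,2M]$, and the Jensen-type inequality $\int g(\log g)\rho\diff x\geq(\int g\rho\diff x)\log(\int g\rho\diff x)$ applied with $g=e^{W*\rho}$; combined, these identities should yield the required $M$-dependent entropy deficit and close the argument.
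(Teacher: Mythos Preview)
Your substitution $\tilde\rho = \rho e^{W*\rho}$ and the rewriting of the dissipation as $D[\rho] \geq e^{-M} I[\tilde\rho]$ are exactly right and match the paper's setup. The divergence from the paper comes at the next step, and unfortunately the route you chose leads to an inequality that is false.

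The reduction you propose, namely $H[\hat\rho] \geq E[\rho] - C(M)$, does not hold in general for fixed $W$ with $\|W\|_{L^\infty} = M$. Consider a density $\rho$ that places mass $\tfrac12$ on a very concentrated bump (scale $\epsilon \to 0$) and mass $\tfrac12$ on a spread-out plateau (scale $L$ large but bounded). If the even potential $W$ is arranged so that $W*\rho \approx -M$ on the concentrated piece and $W*\rho \approx 0$ on the spread-out piece, then $\hat\rho$ redistributes mass: the concentrated piece carries mass $\beta_A = e^{-M}/(1+e^{-M}) < \tfrac12$ and the spread-out piece carries $\beta_B = 1-\beta_A > \tfrac12$. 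A direct computation gives
\[
H[\hat\rho] - H[\rho] \;\approx\; \Bigl(\tfrac12 - \beta_A\Bigr)\log\epsilon \;+\; \Bigl(\tfrac12 - \beta_B\Bigr)\log L \;+\; O(M),
\]
and both terms on the right are negative; the first one tends to $-\infty$ as $\epsilon\to 0$. Since $E[\rho] = H[\rho] + O(M)$, the comparison $H[\hat\rho] \geq E[\rho] - C(M)$ fails. The tools you list (the mean-zero property, $e^\lambda \geq 1+\lambda$, Jensen for $\int g(\log g)\rho$) cannot rescue this, because the target inequality is simply not true as a functional inequality on probability densities.

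The paper avoids $H[\hat\rho]$ entirely. Instead of Stam's inequality, it applies Jensen's inequality directly to $E[\rho] = \int \rho \log(\rho e^{\frac12 W*\rho})\diff x$ with respect to the probability measure $\rho\,\diff x$: with $p = 2/n$ this yields
\[
E \;\le\; \frac{n}{2}\log\Bigl(C(M)\!\int \tilde\rho^{\,1+2/n}\diff x\Bigr).
\]
The right-hand side is an $L^{1+2/n}$ norm of $\tilde\rho$, which is dominated by the concentrated part of $\tilde\rho$ --- precisely where the Fisher information is also concentrated. The Gagliardo--Nirenberg inequality $\int \tilde\rho^{\,1+2/n} \leq C(n)\,(\int|\nabla\sqrt{\tilde\rho}|^2)(\int\tilde\rho)^{2/n}$ then closes the loop without any entropy comparison. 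In short: Jensen-on-$E$ followed by Gagliardo--Nirenberg factors through the quantity $\int\tilde\rho^{\,1+2/n}$, whereas Stam-on-$\hat\rho$ factors through $H[\hat\rho]$, and only the former is comparable to $E$ uniformly over all admissible $\rho$.
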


\begin{proof}
For the length of the proof, let us denote 
$E(t) := E[\rho(t)]$.
Taking the time derivative of $E(t)$, we have
\begin{align*}
	 \frac{\diff E}{\diff t} &= -\int_{\mathbb R^n} \rho \left|  \nabla \frac{\delta E}{\delta u}   \right|^2 \diff x = -\int_{\mathbb R^n} \rho \left|  \nabla \left( \log \rho + W * \rho \right)   \right|^2 \diff x = -\int_{\mathbb R^n} \rho \left|  \nabla \log \left(  \rho e^{W * \rho}   \right)    \right|^2 \diff x.
\end{align*} 
If we define the auxiliary function $u(x,t)$ as
$
	u := \rho e ^{ W * \rho },
$
the above becomes 
\begin{equation}\label{dEdt2}	\frac{\diff E}{\diff t} = -\int_{\mathbb R^n} u e^{- W* \rho}  \left|  \nabla\left(  \log  u \right)   \right|^2 \diff x = -4 \int_{\mathbb R^n} e^{- W* \rho}  \left|  \nabla \sqrt u  \right|^2 \diff x,
\end{equation}
where the last identity follows from the fact that $u |\nabla \log u|^2 = 4 |\nabla \sqrt u|^2$.
For bounded $W$, we have $\| W * \rho (t) \|_{L^\infty} \le \| W \|_{L^\infty} \| \rho (t) \|_{L^1} \le \|W\|_{L^\infty}$, where we used that $\|\rho(t)\|_{L^1}=\|\rho_0\|_{L^1}=1$. 
Applying this to \eqref{dEdt2} yields
\begin{equation}\label{eq_dEdt} 
	 \frac{\diff E}{\diff t} \le - 4e^{-\| W \|_{L^\infty}} \int_{\mathbb R^n} |\nabla \sqrt u |^2 \diff x.
\end{equation}
In the rest of the proof, we aim to obtain a lower bound on the integral $\int_{\mathbb R^n} |\nabla \sqrt u |^2 \diff x$ in terms of $E$ itself. Recall that $E$ can be written as
\begin{equation*}
    E = \int_{\mathbb R^n} \log \left(  \rho e^{\frac { W*\rho} {2}}  \right)   \rho(x) \diff x = \frac 1 p \int_{\mathbb R^n} \log \left(  \left(  \rho e^{\frac { W*\rho} {2}}  \right)^p \right)  \rho(x) \diff x,
\end{equation*}
where $p>1$ will be determined momentarily. Applying Jensen's inequality gives
\begin{align}
	\nonumber E %
	& \nonumber 
	\le \frac 1 p \log \left( \int_{\mathbb R^n}   \left(  \rho e^{\frac { W*\rho} {2}}  \right)^p  \rho \diff x \right)   %
	= \frac 1 p \log \left( \int_{\mathbb R^n}    u^{p+1}  e^{ (-\frac p 2 -1)  { W*\rho} }  \diff x \right)  %
	\\
	& 
	\le \frac 1 p \log \left( e^{ (1 + \frac{p} 2)\|W\| _{L^\infty}} \int_{\mathbb R^n}    u^{p+1}   \diff x \right). %
	\label{eq_E_final}
\end{align}
From now on, let us fix $p:=\frac{2}{n}$. For such $p$, the Gagliardo-Nirenberg inequality gives that
\begin{equation*}
	\int_{\mathbb R^n} u^{p+1} \diff x \le C(n)\left( \int_{\mathbb R^n} |\nabla \sqrt u|^2 \diff x \right) \left( \int_{\mathbb R^n} u \diff x\right)^{\frac 2 n} \le C(n) e^{\frac 2 n \| W \|_{L^\infty}} \int_{\mathbb R^n} |\nabla \sqrt u|^2 \diff x.
\end{equation*}
Combining this with \eqref{eq_dEdt} and \eqref{eq_E_final}, we have
\begin{align*}
	E& \le \frac n 2 \log \left( C(n) e^{(1+\frac 3 n) \| W \|_{L^\infty}} \int_{\mathbb R^n} |\nabla \sqrt u|^2    \diff x \right) %
	\le \frac n 2 \log \left(  - \frac 14 C(n) e^{(2 + \frac 3 n) \| W \|_{L^\infty}}\frac{\diff E}{\diff t}  \right).
\end{align*} 
This means
\begin{equation*}
	\frac{\diff E}{\diff t}  \le - c(n,\|W\|_{L^\infty}) e^{ \frac 2 n E(t)},
\end{equation*}
where $c(n,\|W\|_{L^\infty})=4 C(n)^{-1} e^{-(2+\frac{3}{n})\|W\|_{L^\infty}}$. Solving this differential inequality yields the inequality \eqref{ineq_E}, finishing the proof.
\end{proof}

We now focus on using these estimates to obtain {uniform-in-time bounds for the rescaled equation} \eqref{eq:Fokker-Plack rho scaled}.
Following \cite{Carrillo+Toscani2000}, we perform a time-dependent rescaling with the new time and spatial variables being 
\begin{equation}\label{def_tau}
\tau = \log \lambda(t), \quad y = \lambda^{-1}(t)x,
\end{equation}
where $\lambda(t) = \sqrt{2t+1}$.
Let the rescaled density $\rhos(\tau,y)$ be related to $\rho(t,x)$ by
\begin{equation}\label{def_tilderho}
	\rhos (\tau,y) = \lambda(t)^n \rho (t, x),
\end{equation}
or, equivalently, 
\[\rhos(\tau,y)=e^{n\tau} \rho\left(\frac{e^{2\tau}-1}{2}, e^\tau y\right).
\]
Note that $\rho(0,\cdot)=\rhos(0,\cdot)$ and the $L^1$ norm of $\rhos(\tau,\cdot)$ is preserved under the rescaling. In addition, if $\rho(t,x)$ satisfies the heat equation $\partial_t \rho = \Delta_x \rho$, it is well-known (see \cite{Carrillo+Toscani2000} for example) that $\tilde\rho(\tau,y)$ satisfies the Fokker--Planck equation $\partial_\tau \rhos = \Delta_y\rhos + \nabla_y\cdot(\rhos y).$

Next let us derive the equation satisfied by $\rhos$ when $\rho$ solves \eqref{eq:ADE rho}. Compared to the heat equation, $\partial_\tau \rhos$ has an additional term $e^{(n+2)\tau} \nabla_x \cdot (\rho\nabla_x(W*\rho))(t,x)$, and it suffices to express it in terms of the new variables $\tau,y$ as well as $\rhos$.
Using the definition of $\tau,y$ and $\rhos$, the convolution $(W*\rho)(t,x)$ can be expressed as
\begin{equation}\label{eq_convolution_new}
\begin{split}
(W*\rho)(t,x) &= \int_{\mathbb{R}^n} W(x-x') \rho(t,x') \diff x'= \int_{\mathbb{R}^n} W\left(\lambda(t)(y-y')\right) \rho(t,x') \lambda^n(t) \diff y'\\
&= \int_{\mathbb{R}^n} W\left(e^\tau (y-y')\right) \rhos(\tau,y') \diff y'  =:(\Ws*\rhos)(\tau,y),
\end{split}
\end{equation}
using the change of variables $y':=\lambda^{-1}(t)x'$ and \eqref{def_tilderho},
where $\Ws(\tau,y) := W(e^\tau y)$. As a result, the additional term in $\partial_\tau \rhos$ can be written as
\[
e^{(n+2)\tau} \nabla_x \cdot (\rho\nabla_x(W*\rho))(t,x) = \nabla_y\cdot (\rhos \nabla_y(\Ws*\rhos))(\tau,y),
\]
where we used that $\nabla_y=e^\tau \nabla_x$, as well as \eqref{def_tilderho} and \eqref{eq_convolution_new}. Finally this leads to the equation for $\rhos$ in rescaled variables:
\begin{equation*}
	\frac{\partial \rhos}{\partial \tau}  =  \Delta_y \rhos + \nabla_y\cdot (y \rhos ) +  \nabla_y \cdot ( \rhos \nabla_y  (\Ws * \rhos)  ).
\end{equation*}

\begin{remark}
	In the rescaled variables, even though $\Ws(\tau,\cdot)=W(e^\tau\cdot)$ is $\tau$-dependent, its $L^\infty$ norm remains uniformly bounded as long as $W\in L^\infty$, and one can easily check that \[\|\Ws(\tau,\cdot)\|_{L^\infty} = \|W\|_{L^\infty} \quad\text{ for all }\tau\geq 0.
	\]However, the $\mathcal W^{m,q}$ norm of $\Ws(\tau,\cdot)$ can be exponentially growing/decaying in $\tau$, depending on the values of $m$ and $q$. More precisely, for any multi-index $\alpha$ and $q\geq 1$, we have
	\begin{equation*}
		\int_{\mathbb R^n} |D^\alpha \Ws (\tau, y) |^q \diff y = e^{(|\alpha|q -n ) \tau} \int_{\mathbb R^n} |D^\alpha W (x) |^q \diff x,
	\end{equation*}
	which leads to
	\begin{equation}
	\label{eq:scaling classical Sobolev}
		\| D^\alpha \Ws(\tau,\cdot) \|_{L^q} = e^{(|\alpha| - \frac{n}q) \tau}\| D^\alpha W \|_{L^q}.
	\end{equation}
	As a result, if $W \in  \mathcal W^{m,q}$ with $m \le  n / q$,
	then $\|\Ws(\tau,\cdot)\|_{\mathcal W^{m,q}}$ is uniformly bounded above for all $\tau\geq 0$. Furthermore, if $m <  n  / q$ then $\|\Ws(\tau,\cdot)\|_{\mathcal W^{m,q}}$ decays to zero as $\tau\to\infty$. The same kind of estimates holds for fractional Sobolev norms (see \Cref{sec:scaling of fractional Sobolev norm}).
\end{remark}

The next step is to establish uniform-in-time bounds for the free energy, the second moment and, as a consequence, the entropy in rescaled variables.
Throughout the rest of this paper, we will focus on the analysis of the rescaled equation \eqref{eq:Fokker-Plack rho scaled}. For notational simplicity, we will suppress the $y$ subscript from $\nabla_y$ and $\Delta_y$. 
Also, all the time and spatial variables below related to $\rhos$ will be the rescaled variables, unless specified otherwise. For example, ``taking the time derivative'' stands for taking the $\tau$-derivative; and when $x$ appears below in $\rhos(\tau,x)$, it will stand for the rescaled spatial variable rather than the original one.

Let us point out that one of the main difficulties to study the rescaled equation \eqref{eq:Fokker-Plack rho scaled} is the lack of a monotone-decreasing free energy functional. If $\Ws$ were known to be independent of $\tau$, it is well-known that there would be a natural free energy functional $\widetilde{F}(\tau)$ associated to \eqref{eq:Fokker-Plack rho scaled}, given by
\begin{equation}\label{def_F}
    \widetilde{F}(\tau) := \int_{\mathbb{R}^n} \left( \rhos \log \rhos + \rhos \frac{|y|^2}2 +\frac{1}{2} \rhos (\Ws(\tau, \cdot)*\rhos) \right) \diff y.
\end{equation}
But, since $\Ws(\tau,\cdot)=W(e^\tau \cdot)$ is $\tau$-dependent, $\widetilde{F}(\tau)$ is not necessarily decreasing in time. In fact, taking the time derivative of $\widetilde{F}(\tau)$ yields
\[
\frac{\diff}{\diff \tau}\widetilde{F}(\tau)= -\int_{\mathbb{R}^n} \rhos \left|\nabla\left(\log\rhos + \frac{1}{2}|y|^2+\Ws*\rhos\right)\right|^2 \diff y + \frac{1}{2} \int_{\mathbb{R}^n} \rhos \left (\frac{ \partial \Ws }{\partial \tau}* \rhos\right ) \diff y,
\]
where $\frac{ \partial \Ws }{\partial \tau} (\tau, y) = \frac{ \partial }{\partial \tau} [ W(e^\tau y) ] = e^\tau y \cdot \nabla W(e^\tau y) = y\cdot \nabla  \Ws(\tau, y)$. Plugging this into the above yields
\begin{equation}\label{dFdt}
\frac{\diff }{\diff \tau }\widetilde{F} (\tau)= -\int_{\mathbb{R}^n} \rhos \left|\nabla\left(\log \rhos + \frac{|y|^2}{2}+ \Ws *\rhos\right)\right|^2 \diff y  + \frac{1}{2}\iint_{\mathbb{R}^n\times \mathbb{R}^n} \rhos(y)\rhos(z) (y-z)\cdot \nabla \Ws(\tau, y-z) \diff y \diff z,
\end{equation}
where the right hand side is not necessarily negative due to the additional double integral. 

Instead of looking for a monotone free energy for the rescaled equation, let us consider a new free energy functional
\begin{equation}\label{def_Es}
	\Es(\tau) := \int_{\mathbb R^n} \left(  \rhos \log \rhos +  \frac 1 2 \rhos (\Ws(\tau,\cdot) * \rhos ) \right) \diff y.
\end{equation}
Even though this functional is not monotone in $\tau$, as we will show below, it has a natural relation with the free energy $E(t) = E[\rho(t)]$
defined in \eqref{def_E} in the original variable, and the sharp rate of decay of $E(t)$ that we established in \Cref{thm:free energy decay} implies a uniform-in-$\tau$ bound of $\Es(\tau)$.

\begin{lemma}
    Assume $W \in \mathcal W^{1,\infty} (\Rd)$,
    and $\rho_0\in L^1_+(\mathbb{R}^n)$ satisfy $\int_\Rd \rho_0 \diff x = 1$ and $E[\rho_0]<\infty$. The energy functionals $E(t)$ in \eqref{def_E} and $\Es(\tau)$ in \eqref{def_Es} satisfy that
    \begin{equation}\label{E_Es}
        E(t)=\Es(\tau)-n\tau \quad\text{ for all }\tau\geq 0,
    \end{equation}
    where $t$ and $\tau$ are related by \eqref{def_tau}. As a consequence, \Cref{thm:free energy decay} implies that
    \begin{equation}\label{ineq_Es}
    \Es(\tau) \leq C(\|W\|_{L^\infty}, n, E[\rho_0])\quad\text{ for all }\tau\geq 0.
    \end{equation}
\end{lemma}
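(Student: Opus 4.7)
The plan is to establish the identity \eqref{E_Es} by a direct change of variables in $\Es(\tau)$, and then deduce \eqref{ineq_Es} by plugging the decay estimate from \Cref{thm:free energy decay} into this identity and observing that the $n\tau$ growth exactly cancels the $-\frac{n}{2}\log t$ decay predicted there.

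For the identity, I would split $\Es(\tau)$ into its entropy part and its interaction part. Recall $\rhos(\tau,y) = e^{n\tau}\rho(t,x)$ with $x = e^\tau y$, so the substitution $y \mapsto x = e^\tau y$ carries $\diff y = e^{-n\tau}\diff x$. For the entropy term,
\begin{equation*}
\int_{\Rd}\rhos\log\rhos\,\diff y = \int_{\Rd} e^{n\tau}\rho(t,x)\bigl(n\tau + \log\rho(t,x)\bigr) e^{-n\tau}\diff x = n\tau\int_{\Rd}\rho\,\diff x + \int_{\Rd}\rho\log\rho\,\diff x,
\end{equation*}
and the first summand contributes exactly $n\tau$ since $\|\rho(t)\|_{L^1}=1$. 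For the interaction term, the identity $(W*\rho)(t,x) = (\Ws*\rhos)(\tau,y)$ already proven in \eqref{eq_convolution_new} gives
\begin{equation*}
\tfrac{1}{2}\int_{\Rd}\rhos(\Ws*\rhos)\,\diff y = \tfrac{1}{2}\int_{\Rd} e^{n\tau}\rho(t,x)(W*\rho)(t,x)e^{-n\tau}\diff x = \tfrac{1}{2}\int_{\Rd}\rho(W*\rho)\,\diff x.
\end{equation*}
Adding the two contributions recovers $E(t)+n\tau$, which is exactly \eqref{E_Es}.

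For the uniform bound, I would combine \eqref{E_Es} with \eqref{ineq_E} to get
\begin{equation*}
\Es(\tau) = E(t) + n\tau \le -\tfrac{n}{2}\log\!\bigl(ct+e^{-\frac{2}{n}E[\rho_0]}\bigr) + n\tau,
\end{equation*}
and then split on the size of $\tau$. For $\tau\ge 1$, using $t = \tfrac{1}{2}(e^{2\tau}-1)\ge \tfrac{1}{4}e^{2\tau}$, the lower bound $ct + e^{-2E[\rho_0]/n}\ge \tfrac{c}{4}e^{2\tau}$ yields $-\tfrac{n}{2}\log(\cdots) \le -n\tau - \tfrac{n}{2}\log(c/4)$, so the $n\tau$ terms cancel and $\Es(\tau)$ is bounded by a constant depending only on $c$ (hence on $n$ and $\|W\|_{L^\infty}$). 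For $\tau\in[0,1]$, we simply bound $-\tfrac{n}{2}\log(ct+e^{-2E[\rho_0]/n})\le E[\rho_0]$ and $n\tau\le n$, giving a bound depending on $E[\rho_0]$ and $n$. Taking the maximum of the two regimes gives \eqref{ineq_Es}.

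There is no genuine obstacle here: the identity is a matter of bookkeeping under the parabolic rescaling, and the consequence is arranged precisely so that the sharp $-\tfrac{n}{2}\log t$ rate of \Cref{thm:free energy decay} matches the $n\tau$ growth coming from the logarithmic change of time variable. The only mild care needed is to ensure positivity of the argument of the logarithm for all $\tau\ge 0$, which is automatic from the $e^{-\frac{2}{n}E[\rho_0]}$ offset built into \eqref{ineq_E}.
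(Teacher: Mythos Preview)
Your proposal is correct and follows essentially the same approach as the paper: the identity \eqref{E_Es} is obtained by the same change of variables (you compute $\Es$ in terms of $E$ while the paper computes $E$ in terms of $\Es$, but the bookkeeping is identical), and the uniform bound is deduced from \eqref{ineq_E} in the same way. The only cosmetic difference is that the paper combines $-\tfrac{n}{2}\log(ct+e^{-2E[\rho_0]/n})+n\tau$ into a single logarithm $\tfrac{n}{2}\log\bigl((2t+1)/(ct+e^{-2E[\rho_0]/n})\bigr)$ and observes the ratio is uniformly bounded, whereas you split into $\tau\ge 1$ and $\tau\in[0,1]$; both arguments are equivalent.
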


\begin{proof}
Let us write the original energy $E(t)= \int_{\mathbb R^n} \left(  \rho \log \rho +  \frac 1 2 \rho (W * \rho ) \right) \diff x$ in terms of $\rhos$. For the entropy term, using \eqref{def_tau} and \eqref{def_tilderho} we have
\begin{align*}
	 \int_{\mathbb R^n}   \rho(x,t) \log \rho(x,t) \diff x &= \int_{\mathbb R^n}  \lambda^{-n}(t) \rhos(\tau,y) \log( \lambda^{-n}(t) \rhos(\tau,y)) \lambda^n(t) \diff y\\
	 &= \int_{\mathbb R^n} \rhos(\tau,y) \log \rhos(\tau,y) \diff y - n \tau,
\end{align*}
where in the last step we used that $\tau=\log\lambda(t) $ as well as $\int_{\mathbb{R}^n} \rhos(\tau,y)\diff y=1$. As for the interaction energy, using \eqref{def_tau} and \eqref{def_tilderho}  together with \eqref{eq_convolution_new}, we have
\begin{align*}
\int_{\mathbb{R}^n}\rho(t,x)(W*\rho)(t,x)\diff x 
&=\int_{\mathbb{R}^n}\lambda^{-n}(t)\rhos(\tau,y)(\Ws*\rhos)(\tau,y)\lambda^n(t)\diff y
= \int_{\mathbb{R}^n}\rhos(\tau,y)(\Ws*\rhos)(\tau,y)\diff y.
\end{align*}
Combining the above two identities together yields \eqref{E_Es}. Using \eqref{E_Es} and the inequality \eqref{ineq_E} for $E(t)$ we have, recalling $\tau=\log\sqrt{2t+1}$,
\[
\begin{split}
    \Es(\tau) = E(t)+n \log\sqrt{2t+1}
    &\leq \frac{n}{2}\log\left(\frac{2t+1}{c(\|W\|_{L^\infty},n)t+e^{-\frac{2}{n}E[\rho_0]}}\right)
    \\
    &
    \leq C(\|W\|_{L^\infty},n,E[\rho_0])\quad\text{ for all }\tau\geq 0,
\end{split}
\]
where the last inequality follows from the fact that for all $t\geq 0$, the fraction in the second line is uniformly bounded above by some constant only depending on $\|W\|_{L^\infty},n$ and $E[\rho_0]$. This finishes the proof of \eqref{ineq_Es}.\end{proof}

\begin{remark}
For $W\in L^\infty$, since the interaction energy satisfies \[
\frac{1}{2}\left|\int_\Rd\rhos(\Ws*\rhos)\diff y\right|\leq \frac{1}{2}\|\Ws\|_{L^\infty}\|\rhos\|_{L^1}^2 = \frac{1}{2}\|W\|_{L^\infty}\quad\text{ for all }\tau\geq 0,
\]
the bound \eqref{ineq_Es} on $\Es$  immediately implies that the entropy for the rescaled equation is uniformly bounded above:
\begin{equation}
	\label{eq:entropy bound}
	\int_{\mathbb R^n}  \rhos (\tau,y) \log \rhos ( \tau,y) \diff y \le \Es(\tau) + \frac{1}{2}\left|\int_\Rd \rhos(\Ws*\rhos)\diff y\right| \leq C(\|W\|_{L^\infty}, n, E[\rho_0])\quad\text{ for all }\tau\geq 0.
\end{equation}
\end{remark}

Let us now prove a {uniform-in-time bound of the second moment} in rescaled variables.
In \eqref{eq:entropy bound}, we have obtained a uniform-in-time bound of the entropy $\int\rhos(\tau,y)\log\rhos(\tau,y) dy$. In order to upgrade it into a uniform-in-time $L \log L$ norm of $\rhos$, we need some uniform-in-time tightness of $\rhos(\tau,\cdot)$. Our next goal is to obtain a uniform-in-time bound of the second moment of $\rhos(\tau,\cdot)$, given by
\begin{equation*}
	\mathcal N_2(\tau):= \mathcal N_2 [\rhos(\tau)] = \int_{\mathbb R^n} |y|^2 \rhos (\tau,y) \diff y.
\end{equation*}

A natural starting point is to track the evolution of $\mathcal N_2(\tau)$ in time. Taking its time derivative and integrating by parts in space, we deduce that
\begin{equation}\label{dN2dt}
\begin{split}
\frac{\diff}{\diff \tau} \mathcal{N}_2(\tau) &= -\int_{\mathbb{R}^n} 2y\cdot(\nabla\rhos + \rhos y + \rhos \nabla(\Ws * \rhos)) \diff y\\
&= 2n - 2\mathcal{N}_2(\tau) -  2\iint_{\mathbb{R}^n\times \mathbb{R}^n} \rhos(y)\rhos(z) y \cdot \nabla \Ws(y-z) \diff y \diff z\\
&= 2n - 2\mathcal{N}_2(\tau) -  \iint_{\mathbb{R}^n\times \mathbb{R}^n} \rhos(y)\rhos(z) (y-z) \cdot \nabla \Ws(y-z) \diff y \diff z,
\end{split}
\end{equation}
where the last identity is obtained by exchanging $y$ and $z$ in the integrand and taking average with the original integral. 

Note that if $W$ is attractive (i.e. $W$ is radially increasing), we have that $x\cdot\nabla W(x)\geq 0$ for all $x$, and the same is true for the rescaled potential $\Ws(\tau,\cdot)$. This leads to the differential inequality $\frac{\diff}{\diff \tau}\mathcal{N}_2(\tau) \leq 2n-2\mathcal{N}_2(\tau)$, which yields a uniform-in-time upper bound of $\mathcal{N}_2(\tau)$. However, this argument fails for a general bounded potential $W$ that is not necessarily attractive.

To overcome this difficulty, instead of tracking the time derivative of $\mathcal{N}_2(\tau)$ itself, the idea is to take a linear combination with the functional $\widetilde{F}(\tau)$ in \eqref{def_F}, so that the double integral involving $\nabla\Ws$ will be cancelled in their time derivatives. The result is as follows:

\begin{theorem}\label{thm_m2}
    Let $W \in \mathcal W^{1,\infty}(\mathbb{R}^n)$, and assume $ \rho_0\in L^1_+(\mathbb{R}^n)$ with $\int_{\mathbb{R}^n}\rho_0 \diff x = 1$, $E[\rho_0]<\infty$, and $\mathcal{N}_2[\rho_0]<\infty$. Then we have
    \begin{equation}
    \label{eq:moment 2 bound}
     \mathcal N_2(\tau) = \mathcal N_2[\rhos(\tau)] \le C(\mathcal{N}_2[\rho_0], \|W\|_{L^\infty}, n, E[\rho_0]).
\end{equation}
\end{theorem}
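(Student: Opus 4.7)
My plan is to exploit the hint preceding the theorem and construct a Lyapunov-type quantity by taking a linear combination of $\mathcal N_2(\tau)$ with the non-monotone rescaled free energy $\widetilde F(\tau)$ from \eqref{def_F}, chosen so that the indefinite double integral
\[
I(\tau) := \iint_{\mathbb R^n\times\mathbb R^n} \rhos(\tau,y)\rhos(\tau,z)\,(y-z)\cdot \nabla\Ws(\tau,y-z)\,\diff y\,\diff z
\]
cancels exactly. Specifically, I would define $H(\tau) := \mathcal N_2(\tau) + 2\widetilde F(\tau)$; comparing \eqref{def_F} with \eqref{def_Es} one sees $\widetilde F(\tau) = \Es(\tau) + \tfrac12 \mathcal N_2(\tau)$, so equivalently $H(\tau) = 2\mathcal N_2(\tau) + 2\Es(\tau)$. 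Forming the combination $\frac{d}{d\tau}\mathcal N_2 + 2\frac{d}{d\tau}\widetilde F$ via \eqref{dN2dt} and \eqref{dFdt}, the coefficient of $I(\tau)$ is $-1 + 2\cdot\tfrac12 = 0$, and the non-positive dissipation term from \eqref{dFdt} can be discarded, yielding
\[
\frac{dH}{d\tau} \;\le\; 2n - 2\mathcal N_2(\tau).
\]

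Next, I would close this into a Grönwall inequality for $H$ itself. Using the identity $H = 2\mathcal N_2 + 2\Es$ to substitute $-2\mathcal N_2 = 2\Es - H$ on the right, together with the uniform upper bound $\Es(\tau)\le C(\|W\|_{L^\infty}, n, E[\rho_0])$ from \eqref{ineq_Es}, one obtains
\[
\frac{dH}{d\tau} \;\le\; (2n + 2C) - H(\tau),
\]
which gives $H(\tau) \le \max\{H(0),\, 2n + 2C\}$ for all $\tau \ge 0$. The initial value $H(0) = 2\mathcal N_2[\rho_0] + 2E[\rho_0]$ is finite under the hypotheses, so $H$ is uniformly bounded above.

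To convert this into the desired bound on $\mathcal N_2$ alone, I need a lower bound on $\Es$. The non-negativity of the Kullback--Leibler divergence of $\rhos(\tau,\cdot)$ against the standard Gaussian $G(y) = (2\pi)^{-n/2}e^{-|y|^2/2}$ gives
\[
\int_{\mathbb R^n} \rhos \log \rhos \,\diff y \;\ge\; \int_{\mathbb R^n} \rhos \log G \,\diff y \;=\; -\tfrac12 \mathcal N_2(\tau) - \tfrac{n}{2}\log(2\pi),
\]
while $\bigl|\tfrac12 \int \rhos(\Ws*\rhos)\,\diff y\bigr| \le \tfrac12 \|W\|_{L^\infty}$. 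Combining these two estimates, $\Es(\tau) \ge -\tfrac12\mathcal N_2(\tau) - C'$, and plugging back into $H = 2\Es + 2\mathcal N_2$ yields $\mathcal N_2(\tau) \le H(\tau) + 2C'$, which delivers \eqref{eq:moment 2 bound} with a constant depending only on $\mathcal N_2[\rho_0], \|W\|_{L^\infty}, n$ and $E[\rho_0]$.

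The only genuine obstacle is the first step: identifying that the specific combination $\mathcal N_2 + 2\widetilde F$ eliminates the sign-indefinite term $I(\tau)$, which is exactly the hint given in the text. Everything afterwards is a linear Grönwall argument and the elementary Gaussian comparison for the entropy. A minor technical point is that the formal differentiations leading to \eqref{dN2dt} and \eqref{dFdt} require sufficient spatial decay of $\rhos(\tau,\cdot)$ to justify the integrations by parts; this can be handled by a standard truncation/approximation scheme using the instant regularisation from \Cref{thm:instant regularisation}.
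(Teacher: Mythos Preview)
Your proposal is correct and follows essentially the same approach as the paper: the quantity $H=\mathcal N_2+2\widetilde F$ is precisely twice the paper's $\widetilde F+\tfrac12\mathcal N_2=\Es+\mathcal N_2$, and your Gr\"onwall step and lower bound on $\Es$ via the Kullback--Leibler inequality are exactly the paper's argument (the latter is just a rephrasing of the paper's observation that $\widetilde F$ is bounded below because the Fokker--Planck free energy is minimised at the Gaussian).
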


\begin{proof} 
Let $\widetilde{F}(\tau)$ be defined as in \eqref{def_F}, and recall that its time derivative is given by \eqref{dFdt}. Comparing \eqref{dFdt} with \eqref{dN2dt}, we observe that for the linear combination $\widetilde{F}(\tau)+\frac{1}{2}\mathcal{N}_2(\tau)$, the double-integrals in their time derivative exactly cancel each other. More precisely, we have
\begin{equation*}%
\begin{split}
\frac{\diff}{\diff\tau} \left(\widetilde{F}(\tau)+\frac{1}{2}\mathcal{N}_2(\tau)\right) &=-\int_{\mathbb{R}^n} \rhos \left|\nabla\left(\log \rhos + \frac{|y|^2}{2}+ \Ws *\rhos\right)\right|^2 \diff y + n - \mathcal{N}_2(\tau)
\leq  n - \mathcal{N}_2(\tau).
\end{split}
\end{equation*}
Recall that $\widetilde{F}(\tau)=\Es(\tau)+\frac{1}{2}\mathcal{N}_2(\tau)$, and $\Es(\tau)$ has a uniform-in-time upper bound due to \eqref{ineq_Es}. Therefore
\begin{align*}
 \frac{\diff }{\diff \tau } (\widetilde E(\tau)  + \mathcal{N}_2(\tau) ) &\le  n - \mathcal{N}_2(\tau) = n + \widetilde E(\tau) - \left(\widetilde E (\tau) + \mathcal{N}_2(\tau)\right)  
 \le n + \sup_{\sigma \ge 0} \widetilde E(\sigma) -  \left(\widetilde E (\tau) + \mathcal{N}_2(\tau)\right).
\end{align*} 
Multiplying by $e^{\tau}$ and integrating, we have that
\begin{equation}\label{eq_upper_bound_sum}
    \widetilde E (\tau) + \mathcal{N}_2(\tau) \le e^{-\tau}(\widetilde E (0) + \mathcal{N}_2(0)) + (1-e^{-\tau})\left( n + \sup_{\sigma \ge 0} \widetilde E(\sigma)\right) \leq C(\mathcal{N}_2[\rho_0], \|W\|_{L^\infty}, n, E[\rho_0])
\end{equation}
for all $\tau\geq 0$, where in the second inequality we used \eqref{ineq_Es} and the fact that $\Es(0)=E[\rho_0]$.

Note that this inequality does not yield an upper bound for $\mathcal{N}_2(\tau)$ yet, since we do not know whether $\Es$ is bounded below. Now we write $ \Es (\tau) + \mathcal{N}_2(\tau)$ back into $\widetilde{F}(\tau) + \frac{1}{2}\mathcal{N}_2(\tau)$, and use the crucial fact that
$\widetilde{F}$ is bounded below by a constant only depending on $n$ and $\|W\|_{L^\infty}$, since
\begin{align*}
    \widetilde{F} (\tau) &= \int_\Rd \left( \rhos \log \rhos + \rhos \frac{|y|^2}2 +\frac{1}{2} \rhos (\Ws*\rhos) \right) \diff y 
    \ge \int_\Rd \left( \rhos \log \rhos + \rhos \frac{|y|^2}2 \right) \diff y - \frac{1}{2} \|W \|_{L^\infty} 
    \\
    &
    \ge - C(n) -  \frac{1}{2}\|W \|_{L^\infty} ,
\end{align*}
where the integral in the second line is the free energy of the Fokker-Planck equation, which is minimised at the Gaussian profile. Combining the lower bound of $\widetilde{F} (\tau)$ with the upper bound of $\widetilde{F} (\tau) + \frac{1}{2}\mathcal{N}_2(\tau)$ in \eqref{eq_upper_bound_sum}, we finish the proof of \eqref{eq:moment 2 bound}.
\end{proof}

Finally, we obtain a uniform bound in $L \log L$ in rescaled variables.
Joining \eqref{eq:entropy bound} and \eqref{eq:moment 2 bound} we recover a uniform-in-time bound of $\int_\Rd  \rhos  (\tau) |\log \rhos (\tau) |$ by classical techniques \cite{BD95,BCC12} (we give a general result in \Cref{lem:rho log rho + second moment implies rho | log rho |} which may be of independent interest).
\begin{corollary}\label{cor_entropy}
	If $W \in \mathcal W^{1,\infty} (\Rd)$,
	then the solution of \eqref{eq:Fokker-Plack rho scaled} satisfies
	\begin{equation}
		\label{eq:entropy rescaled finite}
		\int_\Rd  \rhos  (\tau) |\log \rhos (\tau) |  \le C (n, \| W\|_{L^\infty},E [\rho_0], \mathcal N_2 [\rho_0]).
	\end{equation}
\end{corollary}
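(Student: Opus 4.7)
The plan is to deduce \eqref{eq:entropy rescaled finite} by combining the two uniform-in-time estimates already at hand, namely the upper bound on the signed entropy \eqref{eq:entropy bound} and the second moment bound \eqref{eq:moment 2 bound}. Since the issue is only the negative part of $\log \rhos$, I would start from the identity
\[
\int_{\mathbb{R}^n} \rhos |\log \rhos| \diff y = \int_{\mathbb{R}^n} \rhos \log \rhos \diff y + 2 \int_{\{\rhos < 1\}} \rhos (-\log \rhos) \diff y,
\]
so that \eqref{eq:entropy bound} reduces the problem to a uniform-in-$\tau$ estimate of the last integral on the right.

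To control that negative-part integral I would split the sub-unit region by comparing $\rhos$ with a Gaussian weight, writing $\{\rhos(\tau,\cdot) < 1\} = A_1 \cup A_2$ with $A_1 := \{e^{-|y|^2/2} \le \rhos < 1\}$ and $A_2 := \{\rhos < e^{-|y|^2/2}\}$. On $A_1$ the pointwise bound $-\log \rhos \le |y|^2/2$ gives $\rhos(-\log\rhos) \le \tfrac{1}{2}|y|^2 \rhos$, and integrating produces a constant times $\mathcal{N}_2(\tau)$, which is uniformly bounded by \eqref{eq:moment 2 bound}. On $A_2$ I would use the elementary inequality $-x\log x \le (2/e)\sqrt{x}$ for $x\in(0,1)$ (maximum attained at $x=e^{-2}$); combined with $\rhos < e^{-|y|^2/2}$ this yields $\rhos(-\log \rhos) \le (2/e)\, e^{-|y|^2/4}$, whose integral over $\mathbb{R}^n$ is a purely dimensional constant.

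Putting the two regional estimates together with \eqref{eq:entropy bound} would deliver \eqref{eq:entropy rescaled finite} with a constant depending only on $n$, $\|W\|_{L^\infty}$, $E[\rho_0]$ and $\mathcal{N}_2[\rho_0]$, matching the structure of the classical arguments in \cite{BD95,BCC12} and consistent with the general statement the authors attribute to \Cref{lem:rho log rho + second moment implies rho | log rho |}. Honestly, there is no serious obstacle in this step; the only choice that matters is using a Gaussian threshold $e^{-|y|^2/2}$ rather than a constant when splitting $\{\rhos<1\}$, because that is precisely what couples the smallness of $\rhos$ to the tail decay needed for the second region to be integrable.
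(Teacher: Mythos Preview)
Your argument is correct and is precisely the classical splitting from \cite{BD95,BCC12} that the paper invokes. The paper does not spell out the proof of this corollary directly; instead it cites those references and, in addition, proves a more general abstract lemma (\Cref{lem:rho log rho + second moment implies rho | log rho |}) that covers the case $F(s)=s\log s$, $g(r)=r^2$. That lemma proceeds differently: it passes to the symmetric decreasing rearrangement $\rhos^*$ and uses Lieb's trick to obtain the pointwise bound $\rhos^*(y)\le \mathcal N_2[\rhos]\big/\big(|S^{n-1}|G(|y|)\big)$, then controls $F_-(\rhos^*)$ on the tail region using the monotonicity of $|F|$ near zero. Your Gaussian-threshold decomposition avoids rearrangement entirely and is arguably more elementary for this specific $F$ and moment weight; the paper's route trades that simplicity for a statement that applies to general convex $F$ and general moment functions $g$. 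Either way the input is the same---\eqref{eq:entropy bound} and \eqref{eq:moment 2 bound}---and the output constant has the same dependence.
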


\section{Propagation of regularity for the rescaled density \texorpdfstring{$\rhos$}{rho} }
\label{sec:prop regularity}

\subsection{Uniform-in-time bounds of \texorpdfstring{$L^2$}{L2} and \texorpdfstring{$H^1$}{H1} norms}

\begin{theorem}
    \label{thm:propagation of L2 and H1 regularity}
    Let $n \ge 1$,  $
    W \in \mathcal W^{1,\infty}(\mathbb{R}^n)$ and
    $\nabla W \in L^n (\Rd)$.
    Assume $ \rho_0\in L^1_+(\mathbb{R}^n)$ with
    $\int_{\mathbb{R}^n}\rho_0 \diff x = 1$.
    Let $\rho(x,t)$ be the solution constructed in \Cref{thm:existence} with initial data $\rho_0$. Then the rescaled density $\rhos(\tau,y)$ defined in \eqref{def_tau}--\eqref{def_tilderho} satisfies the following:
    \begin{enumerate}
        \item If $\rho_0 \in L^2 (\mathbb R^n)$ with $\mathcal{N}_2[\rho_0]<\infty$, then $\rhos \in L^\infty(0,\infty; L^2 (\mathbb R^n))$ with the estimate
        \begin{equation}\label{eq_L2_uniform}
            \| \rhos (\tau) \|_{L^2} \le C(n, \|W\|_{L^\infty},\|\nabla W\|_{L^n}, \|\rho_0 \|_{L^2}, \mathcal N_2 [\rho_0] ) \quad\text{ for all }\tau\geq 0.
        \end{equation}
        \item If $\rho_0 \in H^1 (\mathbb R^n)$ with $\mathcal{N}_2[\rho_0]<\infty$, then $\rhos \in L^\infty(0,\infty; H^1 (\mathbb R^n))$ with the estimate
        \begin{equation}\label{eq_H1_uniform}
            \| \rhos (\tau) \|_{H^1} \le C(n,\|W\|_{L^\infty}, \|\nabla W\|_{L^n},\|\rho_0 \|_{H^1}, \mathcal N_2 [\rho_0]) \quad\text{ for all }\tau\geq 0.
        \end{equation}
    \end{enumerate}
    
\end{theorem}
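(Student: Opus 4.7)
Both bounds come from direct energy estimates on the rescaled equation \eqref{eq:Fokker-Plack rho scaled}, using in a crucial way the uniform-in-$\tau$ mass, second moment, and $L\log L$ control obtained in \Cref{cor_entropy}. The key scaling remark is that $\|\nabla\Ws(\tau,\cdot)\|_{L^n} = \|\nabla W\|_{L^n}$ is $\tau$-invariant (critical scaling), so the interaction terms cannot be made small merely by waiting and one is forced to split $\nabla W$ by size.

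\textbf{Part (1).} Testing \eqref{eq:Fokker-Plack rho scaled} against $\rhos$ and integrating by parts yields
\begin{equation*}
\tfrac{1}{2}\tfrac{d}{d\tau}\|\rhos\|_{L^2}^2 = -\|\nabla\rhos\|_{L^2}^2 + \tfrac{n}{2}\|\rhos\|_{L^2}^2 - \int_{\Rd} \rhos\,\nabla\rhos\cdot(\nabla\Ws\ast\rhos)\diff y.
\end{equation*}
The last term $I$ is the only delicate one. I would bound it by a Hölder chain followed by Young's convolution inequality with $\nabla W\in L^n$, then (for $n\ge 3$) the critical Sobolev embedding $\dot{H}^1\hookrightarrow L^{2n/(n-2)}$, yielding $|I|\le C\|\nabla W\|_{L^n}\|\nabla\rhos\|_{L^2}^2$. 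Since $\|\nabla W\|_{L^n}$ is not small, the standard workaround is the level-set decomposition $\nabla W=V_1+V_2$ with $\|V_1\|_{L^n}<\epsilon$ chosen arbitrarily small (by integrability of $|\nabla W|^n$ at large heights) and $\|V_2\|_{L^\infty}\le M=M(\epsilon,\|\nabla W\|_{L^n})$: the $V_1$ contribution is absorbed into $\|\nabla\rhos\|_{L^2}^2$, while the $V_2$ contribution is converted via $\|V_{2,\Ws}\ast\rhos\|_{L^\infty}\le M$ and Young's inequality to $\delta\|\nabla\rhos\|_{L^2}^2+CM^2\|\rhos\|_{L^2}^2$. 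Putting everything together,
\begin{equation*}
\tfrac{d}{d\tau}\|\rhos\|_{L^2}^2 + c\|\nabla\rhos\|_{L^2}^2 \le C\|\rhos\|_{L^2}^2,
\end{equation*}
which I close by the Nash inequality $\|\nabla u\|_{L^2}^2\ge c_n\|u\|_{L^2}^{2(n+2)/n}\|u\|_{L^1}^{-4/n}$ applied with $\|\rhos\|_{L^1}=1$: this produces a superquadratic dissipation in $f=\|\rhos\|_{L^2}^2$, and an elementary ODE comparison gives $\sup_\tau f\le\max(f(0),(C/c)^{n/2})$.

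\textbf{Part (2).} Testing the equation against $-\Delta\rhos$ and integrating by parts produces
\begin{equation*}
\tfrac{1}{2}\tfrac{d}{d\tau}\|\nabla\rhos\|_{L^2}^2 = -\|\Delta\rhos\|_{L^2}^2 + \tfrac{n+2}{2}\|\nabla\rhos\|_{L^2}^2 - \int_{\Rd}\Delta\rhos\,\nabla\cdot(\rhos\nabla(\Ws\ast\rhos))\diff y,
\end{equation*}
the $(n+2)/2$ factor arising from integrating the Fokker--Planck drift against $-\Delta\rhos$. Writing $\nabla\cdot(\rhos\nabla\Phi)=\nabla\rhos\cdot\nabla\Phi+\rhos\Delta\Phi$ with $\Phi=\Ws\ast\rhos$, the $\nabla\rhos\cdot\nabla\Phi$ piece is handled exactly as in part (1), with $\Delta\rhos$ now playing the role of $\nabla\rhos$ in the Hölder chain. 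The $\rhos\Delta\Phi$ piece is more subtle because $\Delta W$ is not assumed integrable: I would rewrite $\Delta\Phi=\sum_i(\partial_i\Ws)\ast(\partial_i\rhos)$ so that only one derivative falls on $W$, apply the same level-set decomposition of $\nabla W$, and interpolate $\nabla\rhos$ via Gagliardo--Nirenberg between $\|\rhos\|_{L^2}$ (uniformly bounded from part (1)) and $\|\Delta\rhos\|_{L^2}$. This absorbs the critical contribution into $\delta\|\Delta\rhos\|_{L^2}^2$ plus constants depending only on the previous data. Using then $\|\nabla\rhos\|_{L^2}^2\le\|\rhos\|_{L^2}\|\Delta\rhos\|_{L^2}$ (one integration by parts) and the $L^2$ bound from part (1), I arrive at a differential inequality of the form $g'\le Cg - cg^2 + C$ for $g=\|\nabla\rhos\|_{L^2}^2$, whose solutions are uniformly bounded by the same ODE-comparison argument as in part (1).

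\textbf{Main obstacle.} In both parts, the central difficulty is the critical scale-invariance $\|\nabla\Ws\|_{L^n}=\|\nabla W\|_{L^n}$: no asymptotic smallness is available, so the level-set decomposition at a large height $M$ is essential to convert the critical estimate into a truly small piece plus a bounded but non-critical piece. A secondary obstacle specific to part (2) is that $\Delta W$ is not at our disposal, so every occurrence of $\Delta\Phi$ must be rewritten as a convolution involving only $\nabla W$ and $\nabla\rhos$; this then couples with a Gagliardo--Nirenberg interpolation step resting on the $L^2$ bound from part (1), which is the main reason part (2) is proved only after part (1) rather than in parallel.
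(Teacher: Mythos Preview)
Your argument is correct, but it takes a genuinely different route from the paper in Part~(1), and this is worth spelling out.

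\textbf{Part (1): two different sources of smallness.} As you correctly identify, the direct energy estimate gives a critical interaction term: by Gagliardo--Nirenberg (interpolating with $L^1$) one finds $|I|\le C\|\nabla W\|_{L^n}\|\nabla\rhos\|_{L^2}^2$ with no room to spare, so some smallness must be manufactured. You obtain it by splitting $\nabla W=V_1+V_2$ at a large height $M$, making $\|V_1\|_{L^n}$ small by absolute continuity of the integral. The paper instead truncates the \emph{solution}: it tracks $\|\rhos_k\|_{L^2}^2$ with $\rhos_k=(\rhos-k)_+$, and the smallness comes from $\|\rhos_k\|_{L^1}\le C_0/\log k$, which in turn rests on the uniform $L\log L$ bound of \Cref{cor_entropy}. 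Your approach is more elementary in that it does not invoke the entropy/second-moment machinery at all (despite what you say in your opening paragraph, you never actually use it in Part~(1)); the trade-off is that your constant depends on the level-set profile of $\nabla W$ (i.e.\ on how large $M$ must be so that $\int_{\{|\nabla W|>M\}}|\nabla W|^n<\epsilon$), not merely on $\|\nabla W\|_{L^n}$ as in the theorem statement. Also, you should say a word about $n=1,2$, where the embedding $\dot H^1\hookrightarrow L^{2n/(n-2)}$ fails; the argument still goes through with the obvious adjustment of H\"older/Gagliardo--Nirenberg exponents.

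\textbf{Part (2): the level-set split is unnecessary.} The paper also tests against $-\Delta\rhos$ and rewrites $\Delta(\Ws*\rhos)=\sum_i(\partial_i\Ws)*(\partial_i\rhos)$ exactly as you do. The difference is that the paper shows the split of $\nabla W$ is not needed here: by placing $\nabla(\Ws*\rhos)$ in $L^\infty$ via $\|\nabla W\|_{L^n}\|\rhos\|_{L^{n/(n-1)}}$ and interpolating the remaining factors between $\|\rhos\|_{L^1}$, $\|\rhos\|_{L^2}$ (bounded from Part~(1)) and $\|\Delta\rhos\|_{L^2}$, every term comes out with a power of $\|\Delta\rhos\|_{L^2}$ strictly below $2$, so Young's inequality absorbs it directly. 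Your level-set decomposition also works but adds an avoidable layer.
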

With this result, by compactness we can easily prove  that
\begin{corollary}
    If $W \in L^\infty (\Rd)$ and $\nabla W \in L^n$ for $\rho_0 \in L^2 (\mathbb R^n)$ (resp. $H^1 (\mathbb R^n)$) with  $\mathcal{N}_2[\rho_0]<\infty$, there exists a mild solution of \eqref{eq:ADE rho} that satisfies the estimates above.
\end{corollary}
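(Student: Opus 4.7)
The plan is to construct the solution by mollification of $W$ and a compactness argument. The key observation is that the a priori bounds of \Cref{thm:propagation of L2 and H1 regularity} depend on $W$ only through $\|W\|_{L^\infty}$ and $\|\nabla W\|_{L^n}$, both of which are stable under standard mollification; the hypothesis $\nabla W \in L^n$ alone is however too weak to invoke \Cref{thm:existence} directly, so regularisation is needed.

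Pick a symmetric mollifier $\eta \in C_c^\infty(\Rd)$ with $\eta \ge 0$, $\int \eta = 1$, $\eta(-x)=\eta(x)$, let $\eta_\ee(x)=\ee^{-n}\eta(x/\ee)$, and set $W_\ee := W * \eta_\ee$. Then $W_\ee$ is smooth and symmetric, and Young's inequality gives $\|W_\ee\|_{L^\infty}\le \|W\|_{L^\infty}$ and $\|\nabla W_\ee\|_{L^n}\le \|\nabla W\|_{L^n}$ independently of $\ee$, while $\nabla W_\ee\in L^\infty$ for each fixed $\ee>0$. Moreover $\nabla W_\ee \to \nabla W$ strongly in $L^n(\Rd)$. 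By \Cref{thm:instant regularisation} there is, for each $\ee>0$, a global smooth solution $\rho^\ee$ of \eqref{eq:ADE rho} with $W_\ee$ in place of $W$, and \Cref{thm:propagation of L2 and H1 regularity} applied to $\rho^\ee$ yields the bound \eqref{eq_L2_uniform} (respectively \eqref{eq_H1_uniform}) on the rescaled density $\rhos^\ee$, with constants depending only on $n$, $\|W\|_{L^\infty}$, $\|\nabla W\|_{L^n}$, $\|\rho_0\|_{L^2}$ (resp.\ $\|\rho_0\|_{H^1}$) and $\mathcal N_2[\rho_0]$, and in particular uniform in $\ee$.

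It remains to extract a limit and verify that it is a mild solution of the original problem. Undoing the rescaling, the uniform bounds give $\{\rho^\ee\}$ bounded in $L^\infty(0,T;L^2(\Rd))$ (resp.\ $H^1$) for every $T>0$, with unit mass and uniform tightness inherited from the second-moment bound. A Banach--Alaoglu extraction produces a subsequence weakly-$*$ converging to some $\rho$ in the same space. To pass to the nonlinear term one uses the PDE to bound $\partial_t \rho^\ee$ in a negative-order Sobolev space and applies an Aubin--Lions argument, upgrading the convergence to strong convergence in $C_{loc}([0,\infty); L^q(\Rd))$ for some $q \in [1,2)$. Combining the strong convergence of $\rho^\ee$ with $\nabla W_\ee \to \nabla W$ in $L^n$ and Young's inequality, one passes to the limit in $\rho^\ee \nabla(W_\ee * \rho^\ee)$, and hence in the mild formulation \eqref{eq:ADE mild solution}, identifying $\rho$ as a mild solution; the $L^2$/$H^1$ estimates then transfer by weak lower semicontinuity.

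I expect the main obstacle to be the strong-compactness step used to handle the quadratic nonlinearity: because $\nabla W$ is only $L^n$-integrable, the drift $\nabla(W_\ee * \rho^\ee)$ is not uniformly bounded in $L^\infty$, so one has to balance carefully the uniform $L^1\cap L^2$ bounds on $\rho^\ee$, the tightness from the second-moment estimate, and Young's inequality in $L^n$ in order to close the limit in the product $\rho^\ee \nabla(W_\ee * \rho^\ee)$ without losing integrability at infinity.
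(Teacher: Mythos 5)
Your proposal is correct and matches the paper's intended (but unstated) argument: mollify $W$ so that $\nabla W_\ee\in L^\infty$, invoke \Cref{thm:existence} and \Cref{thm:propagation of L2 and H1 regularity} for each $\ee$, note the a priori $L^2$/$H^1$ bounds depend only on $\|W\|_{L^\infty}$ and $\|\nabla W\|_{L^n}$ (stable under mollification), and pass to the limit via tightness plus Aubin--Lions, using $\nabla W_\ee\to\nabla W$ in $L^n$ to identify the nonlinear term. You correctly flag the one genuinely delicate point, namely handling $\rho^\ee\nabla(W_\ee*\rho^\ee)$ with only $L^1\cap L^2$ control on $\rho^\ee$, which is exactly where the uniform second-moment and $L\log L$ bounds earn their keep.
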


\begin{proof}[Proof of \Cref{thm:propagation of L2 and H1 regularity}]
By the instant regularisation result in \Cref{thm:instant regularisation} and the relation between $\rho$ and $\rhos$, we know that $\rhos \in C^1 ((0,T]; H^2 (\mathbb R^n))$, even if 
we do not have an estimate of $\|\rho(t)\|_{H^2}$.
To obtain uniform-in-time estimates on the $L^2$ norm of $\rhos$, we will track the $L^2$ norm evolution of $\rhos_k := (\rhos - k)_+$, where $k>1$ is a constant to be determined later. To begin with, we list some properties of $\rhos_k$.

\noindent \textbf{Step 1. Relation between $\rhos_k = (\rhos - k)_+$ and $\rhos$.} Due to \eqref{eq:entropy rescaled finite} we have, 
for any $k > 1$ and $\tau\geq 0$, that
\begin{equation}
	\label{eq:equiintegrability from entropy}
\|\rhos_k(\tau)\|_{L^1} \le \int_{\{\rhos > k\}} \rhos \le \frac{1}{\log k} \int_{\{ \rhos > k \}} \rhos \log \rhos \le  \frac{1}{\log k} \int_{ \mathbb R^n } \rhos | \log \rhos | \le \frac{C_0}{\log k}. 
\end{equation} 
where $C_0 = C(n, \| W\|_{L^\infty},E [\rho_0], \mathcal N_2 [\rho_0])$. Note that since $\|\rho_0\|_{L^1}=1$, $E[\rho_0]$ can be bounded above using $\|\rho_0\|_{L^2}$ and $\|W\|_{L^\infty}$.

Next we state an inequality relating the $L^p$ norm of $\rhos_k(\tau,\cdot)$ and $\rhos(\tau,\cdot)$ (below the $\tau$ dependence is compressed for notational simplicity). Since $\rhos = \rhos_k + \min\{\rhos,k\}$, combining the triangle inequality on the $L^p$ norm with H\"older's inequality gives
\begin{equation}\label{eq:relation rhos and rhosk p < infty}
\|\rhos\|_{L^p} \leq \|\rhos_k\|_{L^p} + \|\min\{\rhos,k\}\|_{L^p} \leq  \|\rhos_k\|_{L^p} + k^{\frac{p-1}{p}} \quad\text{ for } p\in [1,\infty),
\end{equation}
 where the second inequality follows from $\|\min\{\rhos,k\}\|_{L^\infty}\leq k$ and $\|\min\{\rhos,k\}\|_{L^1}\leq \|\rho_0\|_{L^1} = 1$. 
For $p = \infty$ we simply use the fact that
\begin{equation}
    \label{eq:relation rhos and rhosk p = infty}
    \|\rhos\|_{L^\infty} \leq \|\rhos_k\|_{L^\infty} +k.
\end{equation}
Hence, if $\rhos_k \in L^\infty(0,T; L^2 (\Rd))$, then so is $\rhos$.

\noindent \textbf{Step 2. Evolution of $L^2$ norm of $\rhos_k$.}
We compute
\begin{align}
    \frac 1 2 \frac{\diff}{\diff \tau} \int_{\mathbb R^n} \rhos_k^2 &= - \int_{\mathbb R^n} \nabla \rhos_k \cdot ( \nabla \rhos + \rhos \nabla (\Ws * \rhos) + \rhos y ) \diff y \nonumber \\
    &= -\| \nabla \rhos_k \|_{L^2}^2 - \underbrace{\int_\Rd \nabla \rhos_k \cdot ( \rhos \nabla (\Ws * \rhos) ) \diff y}_{=:J_1} - \underbrace{\int_\Rd \rhos \nabla \rhos_k \cdot y \diff y}_{=:J_2}.\label{eq_dt_L2}
\end{align}
We first deal with the more complicated term $J_1$. We have
\begin{align*}
    |J_1| %
    & \le \|\nabla \rhos_k \|_{L^2} \|\rhos \|_{L^2} \|\nabla (\Ws * \rhos)\|_{L^\infty} 
    \le \|\nabla \rhos_k \|_{L^2} \| \rhos \|_{L^2} \|\nabla W \|_{L^{n}} \| \rhos\|_{L^{\frac {n}{n-1}}},
\end{align*}
where in the second step we used that $\|\nabla \Ws \|_{L^{n}} = \|\nabla W \|_{L^{n}}$, which is due to \eqref{eq:scaling classical Sobolev}. Note that the above computation holds for all $n\geq 1$,
where for $n = 1$ we use the notation $\frac{n}{n-1} = \infty$. 
Applying \eqref{eq:relation rhos and rhosk p < infty} (or \eqref{eq:relation rhos and rhosk p = infty} for  if $n = 1$) and using $k\geq1$, we recover
\begin{equation}\label{estimate_J1}
    |J_1|  \le \|\nabla \rhos_k \|_{L^2}  \|\nabla W \|_{L^{n}} \left( \| \rhos_k \|_{L^2} + k \right) \left( \| \rhos_k\|_{L^{\frac {n}{n-1}}} + k\right).
\end{equation}
The Gagliardo-Nirenberg inequality yields
\begin{align}
    \| \rhos_k \|_{L^{2}} &\le C(n) \| \nabla \rhos_k \|_{L^2}^{\frac n{n+2}} \| \rhos_k \|^{\frac{2}{n+2}}_{L^1} \label{eq:GN H1 and L1 to L2}\\
    \| \rhos_k \|_{L^{\frac {n}{n-1}}} &\le C(n) \| \nabla \rhos_k \|_{L^2}^{\frac 2{n+2}} \| \rhos_k \|^{\frac{n}{n+2}}_{L^1},\nonumber 
\end{align}
and plugging these two inequalities into \eqref{estimate_J1} gives \begin{align*}
    |J_1| &\le C(n) \|\nabla W\|_{L^n}  \| \rhos_k \|_{L^1} \|\nabla \rhos_k \|_{L^2}^2 \\
    & \quad + C(n,k) \|\nabla W\|_{L^n} \Big(  \| \rhos_k \|_{L^1}^{\frac 2 {n+2}} \|\nabla \rhos_k \|_{L^2}^{1 + \frac{n}{n+2}} + \| \rhos_k \|_{L^1}^{\frac n {n+2}} \|\nabla \rhos_k \|_{L^2}^{1 + \frac{2}{n+2}} + \|\nabla\rhos_k \|_{L^2} \Big).
\end{align*}
By applying Young's  inequality for products to each element in the second term we recover that for any $ 0 < \delta < 1 $,
\begin{equation*}
    |J_1| \le C(n) \|\nabla W\|_{L^n} \Big(\| \rhos_k \|_{L^1} + \| \rhos_k \|_{L^1}^{\frac{2}{n+1}} + \| \rhos_k \|_{L^1}^{\frac{2 n}{n+4}} + \delta \Big)  \| \nabla \rhos_k \|_{L^2}^2 + C(n,k)\| \nabla W \|_{L^n}\delta^{-1}. 
\end{equation*}
We now deal with the term $J_2$. This can be computed explicitly as
\begin{align*}
    J_2 &= \int_{\{\rhos > k\}} \rhos \nabla \rhos_k \cdot y \diff y = \int_{\{\rhos > k\}} (\rhos_k + k) \nabla \rhos_k \cdot  y \diff y 
    = \frac 1 2 \int_{\Rd} \nabla (\rhos_k^2) \cdot y \diff y + k \int_\Rd \nabla \rhos_k \cdot y \diff y \\
    &=-\frac n 2 \int_{\Rd}  \rhos_k^2 \diff y - nk \int_\Rd  \rhos_k  \diff y.
\end{align*}
Using \eqref{eq:GN H1 and L1 to L2} as well as the fact that $\|\rhos_k\|_{L^1}\leq 1$, we have
\begin{align*}
    |J_2| &\le C(n) \|\nabla \rhos_k \|_{L^2}^{\frac{2n}{n+2}} \|\rhos_k \|_{L^1}^{\frac{4}{n+2}} + nk 
    \le C(n) \|\nabla \rhos_k \|_{L^2}^{2} \| \rhos_k \|_{L^1}^{\frac{4}{n}}  + C(n,k).
\end{align*}
Plugging the $J_1$ and $J_2$ estimates into \eqref{eq_dt_L2}, we have that for any $ 0 < \delta < 1$ and $k\geq1$,
\begin{align*}
    \frac{\diff}{\diff \tau} \int_{\mathbb R^n} \rhos_k^2 &\le - \Bigg(2 - C(n, \|\nabla W\|_{L^n}) \Big(\| \rhos_k \|_{L^1} + \| \rhos_k \|_{L^1}^{\frac{2}{n+1}} + \| \rhos_k \|_{L^1}^{\frac{2n}{n+4}} + \| \rhos_k \|_{L^1}^{\frac{4}{n}} + \delta \Big) \Bigg) \| \nabla \rhos_k \|_{L^2}^2 \\
    &\qquad  + C(n,k)(\delta^{-1} \|\nabla W\|_{L^n}+1).
\end{align*}
Due to \eqref{eq:equiintegrability from entropy}, $\|\rhos_k\|_{L^1}$ can be made arbitrarily small for large $k$. Thus we can find a sufficiently large $k = k(n, E[\rho_0], \mathcal N_2[\rho_0], \| W \|_{L^\infty})$ and a sufficiently small $\delta=\delta(n,\|\nabla W\|_{L^n})$, such that for such $\delta$ and $k$,
\begin{align*}
    \frac{\diff}{\diff \tau} \int_{\mathbb R^n} \rhos_k^2 &\le -  \| \nabla \rhos_k \|_{L^2}^2  + C(n,k, \|\nabla W\|_{L^n})\\
    &\le - c(n) %
    \| \rhos_k \|_{L^2}^{\frac{2(n+2)}n}   + C(n,k, \|\nabla W\|_{L^n}),
\end{align*}
where the second inequality follows from \eqref{eq:GN H1 and L1 to L2} and the fact that $\|\rhos_k\|_{L^1}\leq 1$. Therefore, $X(\tau):= \|\rhos_k(\tau)\|_{L^2}^2$ satisfies the differential inequality  $$\dot X \le - c_1 X^{\frac{n+2}{n}} + C_2$$ with $c_1=c(n)$ and $C_2 = C(n, k, \|\nabla W\|_{L^n})$, thus $X(\tau)$ is decreasing whenever $X \ge (C_2/c_1)^{\frac{n}{n+2}}$. In other words, $X(\tau)$ has the upper bound $X(\tau) \le \max \{ X(0), (C_2/c_1)^{\frac{n}{n+2}} \}$. This means that
\begin{equation*}
    \int_{\mathbb R^n} \rhos_k (\tau)^2 \le C(n, \| W \|_{L^\infty}, \|\nabla W\|_{L^n}, \| \rho_0 \|_{L^2}, \mathcal N_2 [\rho_0]),
\end{equation*}
where we used that $E[\rho_0]$ can be bounded above using $\|\rho_0\|_{L^2}$ and $\|W\|_{L^\infty}$. Through \eqref{eq:relation rhos and rhosk p < infty}, we obtain a uniform-in-time bound of $\|\rhos(\tau)\|_{L^2}$, finishing the proof of \eqref{eq_L2_uniform}.

\noindent \textbf{Step 3. Uniform-in-time $H^1$ bound.}
In the rest of the proof we aim to check \eqref{eq_H1_uniform}, where it suffices to control the time-evolution of $\|\nabla \rhos(\tau)\|_{L^2}^2$. 
Taking its time derivative gives
\begin{equation}\label{eq_H1_time}
    \frac 1 2 \frac{\diff }{\diff \tau} \int_\Rd |\nabla \rhos|^2 = - \int_\Rd \Delta \rhos  (\Delta \rhos + \nabla \rhos \cdot \nabla (\Ws * \rhos) + \rhos \Delta (\Ws* \rhos) + \diver ( \rhos y ) ) =: - \|\Delta \rhos \|_{L^2}^2 - \sum_{i=1}^3 J_i.
\end{equation}
For $J_1 := \int_{\mathbb{R}^n} \Delta\rhos \nabla \rhos \cdot\nabla (\Ws* \rhos)$, using the fact that $\|\nabla W\|_{L^n} = \|\nabla \Ws\|_{L^n}$, we have
\[
\begin{split}
|J_1| %
    &\le \|\Delta \rhos \|_{L^2} \| \nabla \rhos \|_{L^2} \|\nabla W\|_{L^n} \| \rhos \|_{L^{\frac{n}{n-1}}}.
\end{split}
\]
Applying the Gagliardo-Nirenberg inequalities 
(see, e.g., \cite{Leoni2009Sobolev})
\[
\|\nabla\rhos\|_{L^2}\leq \|\Delta\rhos\|_{L^2}^{\frac{1}{2}}\|\rhos\|_{L^2}^{\frac{1}{2}} \quad\text{ and }\quad
    \|\rhos\|_{L^\frac{n}{n-1}}\leq C(n) \|\Delta\rhos\|_{L^2}^{\frac 2{n+4}} \|\rhos\|_{L^1}^{\frac {n+2}{n+4}},\label{eq_GN22}
\]
where we recall the well-known fact that $\| D^2 \rhos \|_{L^2} \le C(n) \| \Delta \rhos \|_{L^2}$,
the inequality for $J_1$ becomes
\[
|J_1| \leq   C(n)\|\nabla W\|_{L^n} \| \rhos \|_{L^2}^{\frac 1 2} \|\Delta \rhos \|_{L^2}^{1 + \frac 1 2 + \frac 2 {n+4}}  ,%
\]
where we also use that $\|\rhos\|_{L^1}=1$. Note that the power of $\| \Delta \rhos \|_{L^2}$ on the right hand side is strictly less than 2. Likewise,  $J_2 := \int_{\mathbb{R}^n} (\Delta\rhos) \rhos \Delta (\Ws* \rhos)$ satisfies
\begin{align*}
     |J_2| %
     &\le \|\Delta \rhos \|_{L^2} \|\rhos \|_{L^2} \| \Delta (\Ws* \rhos)\|_{L^\infty}.
\end{align*}
The last term on the right hand side can be controlled as
\begin{align*}
    \| \Delta (\Ws* \rhos)\|_{L^\infty} &\le \sum_{i=1}^n \left \| \frac{\partial \Ws}{\partial x_i} * \frac{\partial \rhos}{\partial x_i} \right \|_{L^\infty} \le \sum_{i=1}^n \left \| \frac{\partial \Ws}{\partial x_i} \right\|_{L^n} \left \| \frac{\partial \rhos}{\partial x_i} \right\|_{L^{ \frac{n}{n-1} }} \\ 
    &\le C(n) \| \nabla W \|_{L^n} \|\Delta \rhos \|_{L^2} ^{\frac {4}{n+4}} \|\rhos\|_{L^1}^{ \frac{n}{n+4} },
\end{align*}
where the second inequality follows from the Gagliardo-Nirenberg inequality $$\|\nabla\rhos\|_{L^\frac{n}{n-1}}\leq
C(n)
\|\Delta\rhos\|_{L^2}^{\frac{4}{n+4}}\|\rhos\|_{L^1}^{\frac{n}{n+4}}.$$
Thus
\begin{equation*}
    |J_2| \le C(n) \| \nabla W\|_{L^n}  \| \rhos \|_{L^2}  \|\Delta \rhos\|_{L^2}^{ 1 + \frac 4{n+4} },
\end{equation*}
and again the power of $\|\Delta \rhos\|_{L^2}$ is less than $2$. Finally, the term $J_3 := \int_\Rd \Delta \rhos \diver (\rhos y) \diff y$ can be explicitly computed as
\begin{align*}
    J_3 
    &=\sum_{i=1}^n \int_\Rd \frac{\partial^2 \rhos}{\partial x_i^2} \frac{\partial }{\partial x_i} ( \rhos x_ i) + \sum_{i\ne j} \int_\Rd \frac{\partial^2 \rhos}{\partial x_j^2} \frac{\partial }{\partial x_i} ( \rhos x_ i) \\
    &= \sum_{i=1}^n \frac 1 2 \int_\Rd \frac{\partial }{\partial x_i} \left( \frac{\partial \rhos }{\partial x_i}  \right)^2 x_ i  - \sum_{i=1}^n \int_\Rd \left( \frac{\partial \rhos}{\partial x_i} \right)^2  + \sum_{i\ne j} \int_\Rd \frac{\partial^2 \rhos}{\partial x_i \partial x_j} \frac{\partial \rhos }{\partial x_j} x_i \\
    &= \left(-1-\frac{n}{2}\right)\int_\Rd |\nabla\rhos|^2 \diff y,
\end{align*}
thus
\begin{equation*}
    |J_3|  \le C(n) \| \nabla \rhos \|_{L^2}^2 \le C(n) \|\Delta \rhos\|_{L^2} \|\rhos\|_{L^2}.
\end{equation*}
Since in the estimates for $J_1, \cdots, J_3$, the powers of $\|\Delta\rhos\|_{L^2}$ are all strictly lower than 2, plugging the estimates into \eqref{eq_H1_time} and applying Young's inequality for products gives
\begin{align*}
     \frac{\diff }{\diff t} \int_\Rd |\nabla \rhos|^2 &\le - \frac{1}{2}\| \Delta \rhos \|_{L^2}^2 + C( n, \| \nabla W \|_{L^n}, \| \rhos (\tau) \|_{L^2}) \\
     &\le -C(n) \| \nabla \rhos \|_{L^2}^{\frac{2(n+4)}{n+2}} \|\rhos_0\|_{L^1}^{-\frac{4}{n+2}} + C( n, \| \nabla W \|_{L^n}, \| \rhos (\tau) \|_{L^2}).
\end{align*}
Since we already have the uniform-in-time bound of $\| \rhos (\tau) \|_{L^2}$ in Step 2, the above differential inequality yields the uniform-in-time $H^1$ bound \eqref{eq_H1_uniform}.
\end{proof}

\begin{remark}
\label{rem:propagation of Hk regularity}
    We expect that the propagation of $H^k$ regularity for any integer $k> 1$ follows from a similar procedure as Step 3, although the computation becomes more involved. We leave the computation to interested readers.%
\end{remark}

\subsection{Uniform-in-time bounds of \texorpdfstring{$C^\alpha$}{Calpha} norm}

In this subsection, we aim to derive the propagation of regularity via an alternative approach. Instead of tracking the evolution of some integral-based quantities such as the $L^2$ or $H^1$ norm, which has been done in a vast amount of literature, we will track the evolution of point-wise quantities such as the modulus of continuity. In the context of nonlocal PDEs, such idea has been  successfully used by Kiselev--Nazarov--Volberg \cite{Kiselev2007} to establish the global-wellposedness for the SQG equation with critical dissipation. 

Our approach is similar to \cite{Kiselev2007}: in order to show that $\rhos$ has a certain modulus of continuity for all times, we will carefully look at the first ``breakthrough'' time $\tau_0$ where the modulus of continuity is about to be violated, and aim to derive a contradiction. While \cite{Kiselev2007} constructed a piecewise modulus of continuity to treat the criticality of SQG equation, for our application to \eqref{eq:Fokker-Plack rho scaled} it turns out the simple H\"older continuity would work.

Throughout this paper, for any $f:\mathbb{R}^n\to\mathbb{R}$, we denote its H\"older seminorm $[f]_{C^\alpha}$ and H\"older norm $\|f\|_{C^\alpha}$ as follows:
\begin{equation*}
    [f]_{C^\alpha} := \sup_{x\ne y} \frac{|f(x) - f(y)|}{|x-y|^\alpha}, \qquad \| f \|_{C^\alpha} := \| f \|_{L^\infty} + [ f ]_{C^\alpha} .
\end{equation*}

\begin{theorem}
	\label{thm:moc}
	Let 
	$ W \in \mathcal W^{1,\infty} (\Rd)$,
	$\alpha \in (0,1)$, $\rho \in C_+ ([0, T); C^\alpha (\mathbb R^n))$ be a classical solution of \eqref{eq:ADE rho} and assume
	\begin{enumerate}[label=(\alph*)]
		\item $n\geq 2$, and $W$ satisfies $\|W\|_{L^\infty} \leq C_W$, 
		    $\|\nabla W\|_{L^n} \leq C_W$
		and 
			 $\| \Delta W \| _{  L^{\frac n 2} } \le C_W $ .
		\item $\rho_0 \in L^1_+(\mathbb{R}^n)$ satisfies that $\int \rho_0 = 1$, $\mathcal{N}[\rho_0]<\infty$, and $\|\rho_0\|_{C^\alpha}<\infty.$
	\end{enumerate}
	Then, the rescaled density $\rhos(\tau,y)$ defined in \eqref{def_tau}--\eqref{def_tilderho}  is $C^\alpha$ H\"older continuous uniformly in time, in the sense that 
	\begin{equation}\label{holder_final}
	\|\rhos(\tau)\|_{C^\alpha}\leq K(C_W, \alpha, n, \mathcal N_2 [\rho_0], \|\rho_0\|_{C^\alpha}) \quad\text{ for all }\tau\geq 0.
	\end{equation}
\end{theorem}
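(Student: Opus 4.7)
My approach follows the modulus-of-continuity breakthrough method of Kiselev--Nazarov--Volberg~\cite{Kiselev2007}. The plan is to construct a strictly concave, increasing modulus $\omega:[0,\infty)\to[0,\infty)$ with $\omega(0^+)=0$ --- roughly $\omega(\xi)=K\xi^\alpha$ for $\xi\in[0,\xi_*]$ and $\omega(\xi)=2M$ for $\xi\ge 2\xi_*$, glued via a strictly concave smooth interpolation on $[\xi_*,2\xi_*]$ --- such that $|\rho_0(y)-\rho_0(z)|\le\omega(|y-z|)$ holds initially, and to prove by contradiction that this modulus is preserved along the rescaled flow~\eqref{eq:Fokker-Plack rho scaled} for all $\tau\ge 0$. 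Here $M:=\sup_{\tau\ge 0}\|\rhos(\tau)\|_{L^\infty}$ is a uniform-in-time $L^\infty$ bound, obtained beforehand by continuing the De~Giorgi level-set iteration of Step~2 of the proof of \Cref{thm:propagation of L2 and H1 regularity}, or by invoking \Cref{rem:propagation of Hk regularity} with $k>n/2$ followed by Sobolev embedding.

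Suppose for contradiction that $\tau_0>0$ is the first time at which the modulus is saturated: there are $y_0\ne z_0$ with $\rhos(\tau_0,y_0)-\rhos(\tau_0,z_0)=\omega(\xi_0)$, $\xi_0:=|y_0-z_0|$, and $\partial_\tau[\rhos(\cdot,y_0)-\rhos(\cdot,z_0)]|_{\tau=\tau_0}\ge 0$. The function $G(y,z):=\rhos(\tau_0,y)-\rhos(\tau_0,z)-\omega(|y-z|)$ attains its global maximum $0$ at $(y_0,z_0)$. First-order optimality yields $\nabla\rhos(y_0)=\nabla\rhos(z_0)=\omega'(\xi_0)e$ with $e:=(y_0-z_0)/\xi_0$, while second-order optimality tested with $(v,-v)\in\mathbb{R}^{2n}$ and summed over an orthonormal basis of $\mathbb{R}^n$ produces the sharp dissipation estimate
\begin{equation*}
\Delta\rhos(y_0)-\Delta\rhos(z_0)\le 4\omega''(\xi_0)<0,
\end{equation*}
which is strictly negative by the concavity of $\omega$.

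The remaining contributions to $\partial_\tau[\rhos(y_0)-\rhos(z_0)]$ coming from \eqref{eq:Fokker-Plack rho scaled} are estimated as follows. The confining drift $\nabla\cdot(y\rhos)=n\rhos+y\cdot\nabla\rhos$ yields exactly $n\omega(\xi_0)+\xi_0\omega'(\xi_0)$. Setting $V:=\nabla(\Ws*\rhos)$, the interaction $\nabla\cdot(\rhos V)=V\cdot\nabla\rhos+\rhos\,\nabla\cdot V$ contributes $\omega'(\xi_0)e\cdot(V(y_0)-V(z_0))+\rhos(y_0)[(\nabla\cdot V)(y_0)-(\nabla\cdot V)(z_0)]+\omega(\xi_0)(\nabla\cdot V)(z_0)$, bounded in absolute value by $2A\omega'(\xi_0)+2MB+B\omega(\xi_0)$, where $A:=\sup_\tau\|V\|_{L^\infty}$ and $B:=\sup_\tau\|\nabla\cdot V\|_{L^\infty}$. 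Both $A$ and $B$ are finite uniformly in $\tau$ thanks to the scale invariances $\|\nabla\Ws\|_{L^n}=\|\nabla W\|_{L^n}$ and $\|\Delta\Ws\|_{L^{n/2}}=\|\Delta W\|_{L^{n/2}}$ from \eqref{eq:scaling classical Sobolev}, together with the uniform $H^1$ bound of \Cref{thm:propagation of L2 and H1 regularity} and Sobolev embedding (in $n=2$ one writes $\nabla\cdot V=\nabla\Ws*\nabla\rhos$ and pairs $L^2$ with the $H^1$ bound on $\rhos$).

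Collecting all the bounds and inserting $\omega(\xi_0)=K\xi_0^\alpha$ gives the master inequality
\begin{equation*}
0\le -4K\alpha(1-\alpha)\xi_0^{\alpha-2}+C_1K\xi_0^{\alpha-1}+C_2K\xi_0^\alpha+C_3,
\end{equation*}
with $C_i=C_i(n,\alpha,A,B,M)$; since every positive term is of strictly lower order in $\xi_0^{-1}$ than the dissipation, choosing $K=K(n,\alpha,A,B,M,\|\rho_0\|_{C^\alpha})$ large enough makes the right-hand side strictly negative for every $\xi_0\in(0,\xi_*]$, yielding a contradiction. For $\xi_0\ge\xi_*$ we automatically have $\rhos(\tau_0,y_0)-\rhos(\tau_0,z_0)\le 2M=\omega(\xi_*)\le\omega(\xi_0)$ from the $L^\infty$ bound, so no breakthrough can occur there either. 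I expect the principal obstacle to be the design of $\omega$: it must be globally strictly concave so that $4\omega''(\xi_0)$ carries a definite negative sign at every point, including at the corner near $\xi_*$ where the power and plateau regimes meet. This is handled by a smooth strictly concave interpolation --- alternatively by adding a small $-\varepsilon\xi^{2\alpha}$ corrector, or by working with a single-piece concave profile such as $K\xi^\alpha/(1+\delta\xi^\alpha)$. A secondary obstacle is the prerequisite uniform-in-time $L^\infty$ bound on $\rhos$, which in dimension $n=2$ is not a direct consequence of \Cref{thm:propagation of L2 and H1 regularity} and requires iterating the level-set $L^2$ estimate performed there.
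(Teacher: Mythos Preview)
Your breakthrough-scenario strategy is the same as the paper's, but the two proofs diverge at a crucial point: you assume a uniform-in-time $L^\infty$ bound $M$ on $\rhos$ \emph{before} running the argument, whereas the paper does not. In the paper's architecture this is circular: the uniform $L^\infty$ bound for $n\ge 2$ is obtained \emph{from} \Cref{thm:moc} (see the proof of \Cref{n1_conv}, where the $C^\alpha$ bound plus $L\log L$ give $L^\infty$). Neither of your proposed sources for $M$ is available---\Cref{rem:propagation of Hk regularity} is explicitly left unproven, and the $L^2$ level-set estimate of \Cref{thm:propagation of L2 and H1 regularity} is not iterated to $L^\infty$ in the paper. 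If you can supply $M$ by an independent Moser iteration, your capped-modulus argument becomes a valid and somewhat simpler alternative; as written, it rests on an unestablished prerequisite.

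The paper's route avoids this by working with the pure modulus $\omega(r)=Kr^\alpha$ and using only the $L\log L$ bound of \Cref{cor_entropy}. The key device is \Cref{cor:moc + entropy implies boundedness}: from $[\rhos(\tau_0)]_{C^\alpha}\le K$ and $\int\rhos|\log\rhos|\le C_0$ one gets $\|\rhos(\tau_0)\|_{L^\infty}\le CK^{n/(n+\alpha)}(\log K)^{-\alpha/(n+\alpha)}$, hence $r_0\le CK^{-1/(n+\alpha)}(\log K)^{-1/(n+\alpha)}$. With these bounds the diffusion term $T_1\sim -K^{(n+2)/(n+\alpha)}(\log K)^{(2-\alpha)/(n+\alpha)}$ and the aggregation terms $T_3,T_4$ carry the \emph{same} power of $K$; only the logarithmic factors differ, and it is precisely the $(\log K)^{-\alpha/(n+\alpha)}$ coming from the $L\log L$ bound that makes $T_1$ dominate for $K\gg 1$. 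The paper stresses in the remark following the proof that an $L^1$ bound alone would lose this logarithm and the argument would not close. Your inequality closes more easily because your $A,B,M$ are $K$-independent constants, but that is exactly what has to be earned.

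One technical slip: testing the Hessian of $G(y,z)=\rhos(y)-\rhos(z)-\omega(|y-z|)$ with $(v,-v)$ and summing over a full orthonormal basis does \emph{not} yield $\Delta\rhos(y_0)-\Delta\rhos(z_0)\le 4\omega''(\xi_0)$; the tangential directions contribute an additional $+4(n-1)\omega'(\xi_0)/\xi_0$, which for $\omega=K\xi^\alpha$ and $n\ge 2$ overwhelms $4\omega''$ and makes the bound positive. The paper instead compares $\rhos(\tau_0,\cdot)$ with $g(y)=\rhos(\tau_0,z_2)+\omega(|y-z_2|)$ to get $\partial_{11}\rhos(z_1)-\partial_{11}\rhos(z_2)\le 2\omega''(r_0)$ in the connecting direction, and uses the translation test $h(v)=\rhos(z_1+v)-\rhos(z_2+v)$ to get non-positive contributions in the transverse directions, yielding $\Delta\rhos(z_1)-\Delta\rhos(z_2)\le 2\omega''(r_0)<0$.
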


Before presenting the proof, let us first state and prove a simple lemma that will be useful in the proof. It shows that if a function has a bounded $C^\alpha$ seminorm, as well as an $L \log L$ bound, it must have an $L^\infty$ bound. 
\begin{lemma}
	\label{cor:moc + entropy implies boundedness}
	For any function $f:\mathbb{R}^n\to\mathbb{R}$ and $\alpha \in (0,1)$, if $[f]_{C^\alpha} \leq K$ for some $K> 1$ and $\int_\Rd f|\log f| \leq C_0$, then we have 
	\begin{equation}\label{L_infty_f}
	\|f\|_{L^\infty} \leq C(n,\alpha,C_0) K^{\frac{n}{n+\alpha}}(\log K ) ^{- \frac { \alpha}{n + \alpha }}.
	\end{equation}

\end{lemma}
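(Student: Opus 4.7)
The plan is to exploit the tension between Hölder continuity (which forces $f$ to be large on a whole ball around any near-maximum point) and the $L\log L$ bound (which prevents $f$ from being large on too big a set). Let $M := \|f\|_{L^\infty}$. If $M$ is below an absolute constant (say $M \le e$), the claim is trivial, so I will assume $M$ is large enough that $\log(M/4) \ge \tfrac12 \log M > 0$.

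First, pick $x_0 \in \Rd$ with $f(x_0) \ge M/2$. Using $[f]_{C^\alpha} \le K$, the reverse triangle inequality gives
\begin{equation*}
    f(x) \ge \tfrac{M}{2} - K|x-x_0|^\alpha \ge \tfrac{M}{4} \qquad \text{for all } x \in B_r(x_0),
\end{equation*}
where $r := \bigl(\tfrac{M}{4K}\bigr)^{1/\alpha}$. Since the map $s \mapsto s \log s$ is increasing on $[e^{-1},\infty)$ and $M/4$ is large, on $B_r(x_0)$ we have $f|\log f| = f \log f \ge \tfrac{M}{4} \log \tfrac{M}{4}$. Integrating over this ball and using the $L\log L$ hypothesis,
\begin{equation*}
    C_0 \;\ge\; \int_{B_r(x_0)} f |\log f| \;\ge\; \omega_n r^n \cdot \tfrac{M}{4} \log \tfrac{M}{4} \;\ge\; c(n,\alpha)\, M^{1 + \frac{n}{\alpha}} K^{-\frac{n}{\alpha}} \log M.
\end{equation*}
Raising both sides to the $\alpha$-th power and rearranging yields the clean inequality
\begin{equation*}
    M^{n+\alpha} (\log M)^\alpha \;\le\; C(n,\alpha,C_0)\, K^n.
\end{equation*}

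Finally, I extract the stated bound from this scalar inequality. Set $\Lambda := K^{\frac{n}{n+\alpha}} (\log K)^{-\frac{\alpha}{n+\alpha}}$; I claim $M \le C(n,\alpha,C_0)\, \Lambda$. Suppose $M \ge \Lambda$ (otherwise we are done). Since $K > 1$, this forces $\log M \ge c \log K$ for a dimensional constant $c > 0$ (this is the only step where one must check a case split on whether $K$ is moderate or large, handled by adjusting the final constant). Substituting $(\log M)^\alpha \ge c^\alpha (\log K)^\alpha$ into the displayed inequality gives $M^{n+\alpha} (\log K)^\alpha \le C K^n$, i.e.\ $M^{n+\alpha} \le C' K^n (\log K)^{-\alpha} = C' \Lambda^{n+\alpha}$, which yields \eqref{L_infty_f}.

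The only delicate step is the comparison $\log M \gtrsim \log K$; this is automatic when $K$ is large but needs a separate (trivial) verification when $K$ is bounded, which only changes the multiplicative constant. Beyond that the argument is a direct pigeonhole on the ball where Hölder continuity keeps $f$ comparable to its supremum.
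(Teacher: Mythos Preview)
Your proof is correct and follows essentially the same approach as the paper: use H\"older continuity to show $f \gtrsim M$ on a ball of radius $\sim (M/K)^{1/\alpha}$ around a near-maximum point, then integrate the $L\log L$ bound over that ball to obtain a scalar inequality relating $M$ and $K$. The only difference is in the final extraction step---the paper rewrites the inequality as $a\log a \le b$ with $a = (M/2)^{(n+\alpha)/\alpha}$ and $b \sim K^{n/\alpha}$ and inverts it directly via $a \le 2b/\log b$, whereas you argue by assuming $M \ge \Lambda$ and bootstrapping $\log M \gtrsim \log K$; both are equivalent elementary manipulations.
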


\begin{proof}
    Let $A := \|f\|_{L^\infty}$, and it suffices to obtain an upper bound of $A$ when $A>2$. Take any $x_0\in\mathbb{R}^n$ such that $f(x_0)\geq \frac{3}{4}A$. Using the modulus of continuity $[f]_{C^\alpha}\leq K$, we have that 
$
f(x)\geq \frac{3A}{4} - K |x-x_0|^\alpha$ for all $x\in \mathbb{R}^n$, thus
\[
f(x)\geq \frac{A}{2}\quad\text{ for all }|x-x_0|\le r_0 = \left( \tfrac {A} {4 K} \right)^{ \frac 1 \alpha } . 
\]
Combining this with the bound $\int f |\log f| \leq C_0$ and the fact that $\frac{A}{2}>1$, we have
	\begin{equation*}
		C_0 \ge \int_{\Rd}  f|\log f| \diff x  
		\ge \int_{B(x_0,r_0) } f |\log f|\diff x
		\ge \omega_n \left( \tfrac {A} {4K} \right)^{ \frac n \alpha }  \tfrac A 2 \log \tfrac A 2,
	\end{equation*}
	where $\omega_n$ is the volume of a unit ball in $\Rd$. This inequality can be rewritten as
	\begin{equation}\label{ineq_temp}
		\left( \tfrac A 2 \right)^{ \frac {n+\alpha} \alpha } \log \left(  \left( \tfrac A 2 \right)^{\frac {n+\alpha} \alpha } \right) \le C (n , \alpha) C_0 K ^{\frac n \alpha}.
	\end{equation}
	Setting $a:= (A/2)^{\frac{(n+\alpha)}{\alpha}}$ and $b:=  C_1 K ^{\frac n \alpha}$, where $C_1 :=\max\{ C (n , \alpha) C_0 ,1\}$, the above inequality implies that $a\log a \leq b$. To bound $a$, it suffices to estimate the solution $\bar a$ to the equation $\bar a \log \bar a = b$ for $b>0$. (Note that the function $a\log a$ is increasing for $a>1$, thus $1\leq a<\bar a$.) Since $\log \bar a \le \bar a$ for $\bar a>1$, we have that $b = \bar a \log \bar a \le \bar a^2 $, hence $\log \bar a \ge \frac 1 2 \log b $. This leads to
	\begin{equation*}
	    a < \bar a = \frac{\bar a \log \bar a}{\log \bar a} = \frac{b}{\log \bar a} \le 2 \frac{b}{\log b}.
	\end{equation*}
    Plugging the definition of $a$ and $b$ into above, we have
    \begin{equation*}
	    \left( \frac A 2 \right)^{ \frac {n+\alpha} \alpha } \le 2 \frac{C_1 K^{\frac n \alpha}}{ \log ( C_1 K^{\frac n \alpha })} \le \frac{2\alpha}{n} C_1 \frac{K^{ \frac n \alpha }}{ \log K },
	\end{equation*}
	where in the second inequality we use the fact that $C_1=\max\{ C (n , \alpha) C_0 ,1\}\geq 1$. Solving this inequality yields \eqref{L_infty_f} and finishes the proof.
\end{proof}

\begin{remark}
Note that if we replace the $L\log L$ bound $\int f|\log f| \leq C_0$ in \Cref{cor:moc + entropy implies boundedness} by an $L^1$ bound $\|f\|_{L^1}\leq C_0$ instead, then an estimate very similar to \eqref{L_infty_f} would still hold, except that we would lose the  $(\log K ) ^{- \frac { \alpha}{n + \alpha }}$ factor. As we will see soon, this negative power of $\log K$ plays an essential role in the proof of \Cref{thm:moc}.
\end{remark}

\begin{proof}[Proof of \Cref{thm:moc}]
	By continuous dependence in $L^1$ of the initial data, without loss of generality we can assume that $\rho_0 \in \mathcal W^{2,\infty} (\Rd)$, and hence $\rho \in C([0,T]; \mathcal W^{2,\infty} (\Rd))$ with $\partial \rho / \partial t \in C([0,T] \times \mathbb R^n)$ for any $T>0$. 
	Note that these regularity properties are also inherited by $\rhos$, since it is a (smooth) rescaling of $\rho$ given by \eqref{def_tau}--\eqref{def_tilderho}. Once \eqref{holder_final} is proved for $\mathcal{W}^{2,\infty}$ initial data (note that the bound $K$ is independent of $\|\rho_0\|_{\mathcal{W}^{2,\infty}}$), the $L^1$ continuous dependence on initial data in \Cref{thm:existence} allow us to approximate a $C^\alpha$ initial data and pass to the limit.
	
	Recall that $\rhos$ solves the rescaled equation \eqref{eq:Fokker-Plack rho scaled}, and \Cref{cor_entropy} give a uniform-in-time $L\log L$ bound of $\rhos$, namely 
	\begin{equation}\label{bd_entropy}
	\int_{\mathbb{R}^n} \rhos(\tau)|\log\rhos(\tau)| \leq C_0(n,\|W\|_{L^\infty}, E[\rho_0], \mathcal{N}_2[\rho_0]).
	\end{equation}
	Using \eqref{eq:Fokker-Plack rho scaled} and the bound \eqref{bd_entropy}, our goal is to show that $[\rhos(\tau)]_{C^\alpha}\leq K$ for all $\tau\geq 0$, where $K>\|\rho_0\|_{C^\alpha}$ is a sufficiently large constant to be determined later, which depends on $n, \alpha, C_W, C_0$ and $\|\rho_0\|_{C^\alpha}$. Once this is shown, combining it with \eqref{bd_entropy} and applying \Cref{cor:moc + entropy implies boundedness} yields the $L^\infty$ bound of $\rho$, finishing the proof.
	
	Towards a contradiction, assume that $[\rhos(\tau)]_{C^\alpha}\leq K$ is not satisfied for all $\tau\geq 0$. Let us set 
	\begin{equation*}
		\omega(r) := K r^\alpha\quad\text{ for }r\geq 0,
	\end{equation*} and define $\tau_0$ as the first time such that the modulus of continuity $\omega$ is about to be violated, i.e.,
	\begin{equation*}
		\tau_0 := \inf \{   \tau \geq 0 : \text{ there exist } y_1 \ne y_2 \text{ such that }  |\rhos(\tau,y_1) - \rhos(\tau ,y_2) | > \omega ( | y_1 - y_2 |)      \}.
	\end{equation*}Note that $\tau_0>0$ since $\|\rho_0\|_{C^\alpha}<K$ and $\tilde\rho \in C([0,T];C^\alpha)$ for any $T>0$. 
	Assuming $\tau_0<\infty$, let us take a closer look at $\rhos$ at the ``breakthrough'' time $\tau_0$ and derive various estimates on $\rhos(\tau_0)$ in the next 4 steps, and we will finally obtain a contradiction at the end of Step 4.

	\noindent{\textbf{Step 1.}} We claim that 
	\begin{equation}\label{moc_temp}
				|\rhos(\tau_0,y_1) - \rhos(\tau_0 ,y_2) | \le \omega ( | y_1 - y_2 |) \quad\text{ for all } y_1, y_2 \in \mathbb R^n,
	\end{equation}
	and there exist $z_1, z_2 \in \Rd$ such that $z_1\neq z_2$, and
	\begin{gather}
		\label{eq:moc proof 1}
		\rhos (\tau_0, z_1) - \rhos (\tau_0, z_2) = \omega(|z_1 - z_2|)  \\
		\label{eq:moc proof 2}
		\frac{ \partial \rhos}{\partial \tau} (\tau_0, z_1) - \frac{ \partial \rhos }{\partial \tau} (\tau_0, z_2) \ge 0.
	\end{gather}
    
    Indeed, by definition of $\tau_0$, \eqref{moc_temp} holds when $\tau_0$ is replaced by any $\tau<\tau_0$, thus \eqref{moc_temp} also holds at $\tau_0$ due to the continuity of $\rhos$ in time.
Also by definition of $\tau_0$, there exists a sequence $(\tau_k, y_1^{(k)} , y_2^{(k)})$ of points such that $\tau_k\in(\tau_0,\tau_0+1)$, $\tau_k\searrow\tau_0$, and
	\begin{equation}\label{def_yk}
	 \omega ( | y_1^{(k)} - y_2^{(k)} |) <   	\rhos(\tau_k,y_1^{(k)}) - \rhos(\tau_k ,y_2^{(k)})   \le  \| \nabla \rhos (\tau_k, \cdot ) \|_{L^\infty} | y_1^{(k)} - y_2^{(k)} | \leq \tilde{C_1}| y_1^{(k)} - y_2^{(k)} |,
	\end{equation}
	where $\tilde{C_1}:= \sup_k \| \nabla \rhos (\tau_k, \cdot ) \|_{L^\infty} < \infty$ since $\tilde\rho \in C([0,\tau_0+1];\mathcal{W}^{2,\infty})$.
	Using that $\omega(r)=Kr^\alpha$, the above inequality becomes
	\begin{equation}\label{min_dist}
		 | y_1^{(k)} - y_2^{(k)} | \ge (K\tilde C_1^{-1})^{\frac{1}{1-\alpha}} > 0,%
	\end{equation}
	so $y_1^{(k)} - y_2^{(k)} \not \to 0$. On the other hand, using $\rhos \ge 0$ we have that
	\begin{equation*}
		\omega ( | y_1^{(k)} - y_2^{(k)} |) <   	\rhos(\tau_k,y_1^{(k)}) - \rhos(\tau_k ,y_2^{(k)}) \le \|  \rhos  (\tau_k , \cdot) \|_{L^\infty } \leq \tilde C_2,
	\end{equation*}
	where $\tilde{C_2}:= \sup_k \| \rhos (\tau_k, \cdot ) \|_{L^\infty} < \infty$ again due to $\tilde\rho \in C([0,\tau_0+1];\mathcal{W}^{2,\infty})$.
	This leads to the estimate
	\begin{equation}\label{max_dist}
		 | y_1^{(k)} - y_2^{(k)} | \le \left(  \tilde C_2 K^{-1} \right) ^{\frac 1 {\alpha}} ,
	\end{equation}
	meaning that $y_1^{(k)}$ and $ y_2^{(k)}$ cannot be too far apart either.
	
	To obtain a convergent subsequence of $(y_1^{(k)}, y_2^{(k)})$, we need to show that the sequence is uniformly bounded in $k$. Towards this end, recall that the second moment $\mathcal{N}_2[\rhos]$ is known to be bounded by \Cref{thm_m2}, and we will use this to show that $\{y_1^{(k)}\}$ are uniformly bounded. First note that $\rhos(\tau_k,y_1^{(k)})$ is uniformly positive since
	 \begin{equation*}
	     \rhos (\tau_k, y_1^{(k)}) > \omega ( | y_1^{(k)} - y_2^{(k)} |) \ge K (K\tilde C_1^{-1})^{\frac{\alpha}{1-\alpha}} =: c_0>0,
	 \end{equation*}
	 where the second inequality follows from \eqref{min_dist}.
	 As a result, by definition of $\tilde C_1= \sup_k \| \nabla \rhos (\tau_k, \cdot ) \|_{L^\infty}$, we have 
	 using the mean value theorem
	 \[
	 \rhos (\tau_k, y)\geq \frac{c_0}{2}\quad\text{ for all }y\in B\Big(y_1^{(k)},\frac{c_0}{2\tilde C_1}\Big).
	 \]
	 Combining this with the uniform bound of the second moment in \Cref{thm_m2} provides an upper bound of $|y_1^{(k)}|$ independent of $k$.
	 Notice that
	  	 \begin{equation*}
 	     \int_\Rd |y|^2 \rho(\tau_k,y) \diff y \ge \frac {c_0}2 \min_{B(y_1^{(k)},\frac{c_0}{2\tilde C_1})}  |y|^2.
 	 \end{equation*}
	 Thus $|y_2^{(k)}|$ are also uniformly bounded due to \eqref{max_dist}.
	Hence, there exists a convergent subsequence of $(y_1^{(k)}, y_2^{(k)})$, and let its limit be $z_1$ and $z_2$. Note that $z_1\neq z_2$ due to \eqref{min_dist}. Using the first inequality in \eqref{def_yk} and passing to the limit, we have that $\rhos(\tau_0,z_1)-\rhos(\tau_0,z_2)\geq \omega(|z_1-z_2|)$, and combining it with \eqref{moc_temp} yields \eqref{eq:moc proof 1}.
	
	Finally, to show \eqref{eq:moc proof 2}, recall that for any $h\in (0,\tau_0)$ we know that $\rhos(\tau_0-h, \cdot)$ has modulus of continuity $\omega$. Combining this with \eqref{eq:moc proof 1} gives that
	\begin{equation*}
		\frac{\rhos( \tau_0 , z_1 ) - \rhos( \tau_0 - h , z_1 )  }{h} - \frac{\rhos( \tau_0, z_2 ) - \rhos( \tau_0 - h , z_2 )  }{h} \ge \frac{\omega(|z_1-z_2|) - \omega(|z_1-z_2| ) }{h} = 0.
	\end{equation*}
 	Passing to the limit as $h \to 0^+$ finishes the proof of \eqref{eq:moc proof 2}.

	\medskip 

	\noindent{\textbf{Step 2.}} Set $r_0 := |z_1 - z_2|$ and assume WLOG that 
	$
		z_1 - z_2 = r_0 \vv e_1.
	$
	In this step we aim to prove the following:
	\begin{gather}
		\label{eq:moc proof 6}
		\nabla \rhos (\tau_0, z_1) = \omega' (r_0) \vv e_1 = \nabla \rhos (\tau_0, z_2)  , \\
		\label{eq:moc proof 7}
		\partial_{11} \rhos (\tau_0, z_1) \le  \omega'' (r_0), \qquad \partial_{11}  \rhos (\tau_0, z_2)  \ge - \omega'' (r_0), \\
		\label{eq:moc proof 7b}
		\partial_{ii} \rhos (\tau_0, z_1) - \partial_{ii} \rhos (\tau_0, z_2) \le 0 \quad\text{ for  } i=2,\dots,n.
	\end{gather}
	To show \eqref{eq:moc proof 6} and \eqref{eq:moc proof 7}, define
	\begin{equation*}
		g(y) := \rhos (\tau_0, z_2) + \omega (| y - z_2 |) .
	\end{equation*}
	Since in step 1 we showed that $\rhos(\tau_0,\cdot)$ has modulus of continuity $\omega$ achieved at $z_1$ and $z_2$, it implies that $g(y)\geq \rhos(\tau_0,y)$ for all $y\in\Rd$, with equality achieved at $y=z_1$. This yields $\nabla\rhos(\tau_0, z_1) = \nabla g(z_1) $ and $\partial_{11}\rhos(\tau_0, z_1) \leq \partial_{11} g(z_1)$.
	A parallel argument can be applied similarly to $(\tau_0, z_2)$, which finishes the proof of \eqref{eq:moc proof 6} and \eqref{eq:moc proof 7}. Finally, to show \eqref{eq:moc proof 7b}, define
	\begin{equation*}
	    h(v) := \rhos(\tau_0, z_1 + v) - \rhos(\tau_0,z_2 + v).
	\end{equation*}
	Again, the fact that $\rhos(\tau_0,\cdot)$ has modulus of continuity $\omega$ achieved at $z_1$ and $z_2$ gives that $h(v)\leq \omega(|z_1-z_2|)$ for all $v\in\Rd$, and it achieves its maximum at $v = 0$. Thus we recover the estimate \eqref{eq:moc proof 7b} for $i =2,\dots,n$.
	Notice that this is valid also for $i = 1$, but in \eqref{eq:moc proof 7} we have better quantitative information.

	\noindent{\textbf{Step 3.}} Let us estimate $A :=  \| \rho (\tau_0, \cdot) \|_{L^\infty}$ and $r_0 := |z_1-z_2|$ in terms of $K$, which will be helpful for us to obtain a contradiction later. Namely, we will prove that
	\begin{gather}
		\label{eq:moc proof 8}
		A \defeq \| \rho (\tau_0, \cdot) \|_{L^\infty} \le C_1K^{\frac {n}{n + \alpha }} (\log K ) ^{-\frac {\alpha}{n + \alpha }}. \\
		\label{eq:moc proof 9} 
		r_0 \le C_2 K^{ - \frac 1 {n+\alpha} } (\log K)^{ - \frac{1}{n+\alpha}} .
	\end{gather}
	where $C_1, C_2 > 0$ depend only on $C_0, n , \alpha$.

	Estimate \eqref{eq:moc proof 8} directly follows from \Cref{cor:moc + entropy implies boundedness}, where we also used \eqref{bd_entropy}. Since $\rhos(\tau_0,z_1) = \omega(r_0) + \rhos(\tau_0, z_2) \ge \omega(r_0) + 0 = K r_0^\alpha$ we have that
	$
		r_0 \le A^{\frac 1 \alpha} K^{-\frac 1 \alpha}.
	$
	Combining this with \eqref{eq:moc proof 8} yields \eqref{eq:moc proof 9}.
	
	\noindent{\textbf{Step 4.}} In this step, we will show that 
	\[
	\frac{ \partial \rhos}{\partial \tau} (\tau_0, z_1) - \frac{ \partial \rhos }{\partial \tau} (\tau_0, z_2) <0
	\]if $K$ is sufficiently large (depending on $C_0,n,\alpha, C_W$), which would lead to a direct contradiction with \eqref{eq:moc proof 2}. Since $\rhos$ satisfies the rescaled equation \eqref{eq:Fokker-Plack rho scaled}, $\frac{ \partial \rhos}{\partial \tau} (\tau_0, z_1) - \frac{ \partial \rhos }{\partial \tau} (\tau_0, z_2)$ can be written as $T_1+T_2+T_3+T_4$, where the four terms are defined below. Our claim is that they satisfy the inequalities
	\begin{align}
		\label{eq:moc proof T1}
		T_1 := \Delta \rhos (\tau_0, z_1 ) - \Delta \rhos (\tau_0, z_2 )  &\le - C_3 K^{ \frac{n + 2}{n+\alpha}} (\log K)^{ \frac{2- \alpha}{ n + \alpha} } ,\\
		\label{eq:moc proof T2}
		T_2 := \diver  (  y \rhos )  (\tau_0, z_1 ) - \diver  (  y \rhos ) (\tau_0, z_2 )  &\le  C_4 K^{ \frac{n }{n+\alpha}} (\log K)^{ -\frac{\alpha}{ n + \alpha} }, \\		
		\label{eq:moc proof T3}
		T_3 := (\nabla \rhos \cdot \nabla (\Ws * \rhos ) )  (\tau_0, z_1 ) - (\nabla \rhos \cdot \nabla (\Ws * \rhos )) (\tau_0, z_2 )  &\le  C_5 K^{ \frac{  n+2}{n+\alpha} } (\log K)^{ \frac{-n\alpha+n-\alpha }{ n (n + \alpha)} } ,\\
		\label{eq:moc proof T4}
		T_4 := \rhos \Delta (\Ws * \rhos )   (\tau_0, z_1 ) -  \rhos \Delta (\Ws * \rhos ) (\tau_0, z_2 )  &\le  C_6 K^{ \frac{n+2 }{n+\alpha}} (\log K)^{ - \frac{\alpha( n+2) }{ n (n + \alpha) } } ,
	\end{align} 
	where $C_3, C_4 > 0$  depend only on $C_0, n , \alpha$ and  $C_5, C_6 > 0$ depend only on $C_0, n , \alpha, C_W$.
	
	To recover \eqref{eq:moc proof T1}, note that \eqref{eq:moc proof 7}-\eqref{eq:moc proof 7b} yields that $T_1 \leq 2\omega''(r_0)=2\alpha(\alpha-1) Kr_0^{\alpha-2}$, which is negative since $\alpha\in(0,1)$. Combining this with \eqref{eq:moc proof 9} gives 
	\begin{align*}
	    T_1 &\le 2 \alpha (\alpha - 1) K r_0^{\alpha - 2} \le - C_3 K \left( K^{ - \frac 1 {n+\alpha} } (\log K)^{ - \frac{1}{n+\alpha}} \right) ^{\alpha - 2} \leq - C_3 K^{ \frac{n + 2}{n+\alpha}} (\log K)^{ \frac{2- \alpha}{ n + \alpha} } .
	\end{align*}

	For \eqref{eq:moc proof T2}, we apply \eqref{eq:moc proof 6} and \eqref{eq:moc proof 9} to get
	\begin{align*}
	    T_2 &= \nabla \rhos (\tau_0, z_1) \cdot (z_1 - z_2) + n (\rhos (\tau_0,z_1) - \rhos(\tau_0,z_2) ) \\
	    &= \omega'(r_0) r_0 + n \omega(r_0) = (\alpha + n) K r_0^\alpha \leq  C_4 K^{ \frac{n }{n+\alpha}} (\log K)^{ - \frac{\alpha }{ n + \alpha } } .
	\end{align*}
	To compute \eqref{eq:moc proof T3} we use \eqref{eq:moc proof 6} to deduce
	\[
	T_3 \leq 2\omega'(r_0) \|\nabla\Ws * \rhos\|_{L^\infty}.
	\]
	We can then apply Young's convolution inequality to obtain
	\begin{equation}
	\label{eq:moc proof T3 estimate}
	    \| \nabla \Ws * \rhos (\tau_0) \|_{L^\infty} \le \|\nabla \Ws \|_{L^n} \| \rhos (\tau_0)\|_{L^{ \frac{n}{n-1}}} \le 
	    C_W
	    \| \rhos (\tau_0)\|_{L^1} ^{\frac {n-1} {n}} \|\rhos (\tau_0) \|_{L^\infty}^{\frac 1 {n}} \le 
	    C_W
	    A^{\frac 1 {n}} .
	\end{equation}
This lead to the bound
	\begin{align*}
	    T_3 &\le 
	    C_W
	    \omega' (r_0) A^{\frac 1 {n}} = C(\alpha,C_W) Kr_0^{\alpha-1} A^{\frac 1 {n}} 	
	\end{align*}
	and we conclude \eqref{eq:moc proof T3} by using \eqref{eq:moc proof 8} and \eqref{eq:moc proof 9}.

	To compute \eqref{eq:moc proof T4} we proceed similarly
	\begin{equation} 
	\label{eq:moc proof T4 estimate}
	\begin{aligned}
	    T_4 &\le 2 \|\rhos (\tau_0) \|_{L^\infty} \| \Delta (\Ws * \rhos) (\tau_0) \|_{L^\infty} 
	    \le 2 A \| \Delta \Ws \|_{L^{\frac n 2}} \|\rhos (\tau_0) \|_{L^{\frac{n}{n-2}}} \\
	    &
	    \le 2 A ^{1 + \frac 2 n} \|\Delta W \|_{L^{\frac n 2 }} \|\rhos (\tau_0) \|_{L^1}^{ \frac{n-2} n  }
	    \le C A^{1 + \frac 2 n},
	\end{aligned}
	\end{equation}
	and plugging \eqref{eq:moc proof 8} into this inequality yields \eqref{eq:moc proof T4}.

Finally, comparing the powers in $T_1,\cdots,T_4$, 
note that $|T_1|$ (coming from  $\Delta \rhos$) has the fastest growth as $K\to\infty$, since it has a larger power of $\log K$ compared to the powers of $T_3, T_4$. 
	By choosing $K$ large enough we have that $T_1 + T_2 + T_3 + T_4 < 0$. This is a contradiction with \eqref{eq:moc proof 2}.
\end{proof}

\begin{remark}
Note that in step 4, the ``good contribution from diffusion'' $T_1$ \eqref{eq:moc proof T1}  and the ``bad contribution from aggregation'' $T_3$  and $T_4$ \eqref{eq:moc proof T3}--\eqref{eq:moc proof T4}  carry exactly the same power of $K$, although they have different powers of $\log K$. This subtle difference in the logarithm powers is the key for us to show that $T_1$ dominates $T_3$ and $T_4$ for $K\gg 1$. In this sense, the a priori $L \log L$ bound in \Cref{cor_entropy} is playing a crucial role since it contributes the logarithm term in \Cref{cor:moc + entropy implies boundedness}. Also, the assumptions on $\|\nabla W\|_{L^n}$ and $\|\Delta W\|_{L^{n/2}}$ are sharp in the sense that if the assumptions were to be made in $L^p$ spaces with any lower $p$, it would result in a higher power of $K$ in \eqref{eq:moc proof T3} and \eqref{eq:moc proof T4}, and the proof would not go through since $T_1$ would not dominate $T_3$ and $T_4$ for $K\gg 1$.
\end{remark}

\section{Convergence to the Gaussian}
\label{sec:intermediate asymptotics}
In this section we focus on obtaining the asymptotic behaviour based on the uniform estimates in the previous two sections.
We first concentrate on the $L^1$ relative entropy approach as introduced in the linear Fokker-Planck equation in \cite{To99,AMTU2001} based on the crucial use of the logarithmic Sobolev inequality.
As usual the $L^2$ relative entropy strategy can also be applied similarly, replacing the log-Sobolev by Poincaré's inequality with respect to the Gaussian measure (see \cite{AMTU2001}).
For an elementary presentation in this direction we send the reader to \cite{Vazquez2017}.

\subsection{\texorpdfstring{$L^1$}{L1} relative entropy}
\label{sec:L1 relative entropy}
Going back to the notion of $L^1$ relative entropy given by \eqref{eq:L1 entropy}, we can 
can compute the time derivative as
	\begin{align*}
		\frac{\diff}{\diff \tau}	E_1 (\rhos \| G)  &= \int_\Rd \left ( \log \rhos + 1 + \frac 1 2 |y|^2 \right ) \rhos _t 
		=- \int_\Rd  \nabla \left ( \log \rhos + \frac 1 2 |y|^2 \right ) \cdot  ( \nabla \rhos + \rhos y + \rhos \nabla \Ws * \rhos  )
		\\
		&
		=:- I_1 (\rhos \| G ) - J_1 - J_2 ,
	\end{align*}
	where $I_1$ is the relative Fisher information 
\begin{equation*}
	I_1 (\rhos \| G ) = \int_\Rd \rhos \left| \nabla \log \rhos + y \right|^2 \diff y =  \int_\Rd \rhos \left| \nabla \log \frac \rhos G \right|^2 \diff y ,
\end{equation*}
and $J_1$ and $J_2$ are given by
\begin{align*}
	    J_1 :=  \int_\Rd \rhos y \cdot \nabla \Ws * \rhos \qquad \text{ and } \qquad 
	    J_2 := \int_\Rd    \nabla  \rhos \cdot \nabla \Ws * \rhos = -\int_\Rd      \rhos \Delta \Ws * \rhos.
\end{align*}
\begin{remark}
For the heat equation i.e. $W = \Ws = 0$, the above becomes
$
	\frac{\diff }{\diff\tau} E_1 (\rhos \| G) = - I_1 [\rhos \| G] .
$
From the logarithmic Sobolev inequality (see \cite{Gross1975}), it is classical that, when $W = 0$ we have
\begin{equation} \label{LSI}
	E_1(\rhos \| G ) \le \frac 1 2  I_1 (\rhos \| G ) .
\end{equation}
From which $\dot E_1 \le - 2 E_1$ and we recover the exponential decay $E_1(\rhos \| G) \le E_1 (\rhos_0 \| G ) e^{-2\tau}$.
\end{remark}

\begin{remark}
\label{rem:CK}
Applying the Csiszar-Kullback inequality \cite{K59,Cs63,AMTU2001}, since $1 + 2t = e^{2\tau}$, we have
\begin{equation} \label{CK}
	E_1(\rhos \|G) \le C(1+\tau)^\beta e^{-\alpha \tau} \implies \| \rho - U \|_{L^1} = \| \rhos - G \|_{L^1} \le 2 \sqrt{ E_1 (\rhos \| G)} \le C  (1 + \ln(1+2t))^{\frac \beta 2 } t^{-\frac \alpha 4}.
\end{equation}
where 
$
    U(t,x) = K\left (t + \frac 1 2 , x \right ).
$
Using the standard decay of the heat equation for even initial data with bounded second moment, we know that $\|U(t) - K(t) \|_{L^1} \le Ct^{- 1}$ (see \cite{duoandikoetxea1992moments}). This means that, as long as $\alpha < 1$, we can always replace $U$ by $K$ as an intermediate asymptotics profile
preserving the rate.
Notice also that one can get the convergence in 2-Wasserstein distance by using Talagrand inequality (see, e.g., \cite{MR1760620,CMV03}). It is a challenging problem to decide whether the Fisher information $I_1(\rhos \|G)$ also decays to $0$ as $\tau \to \infty$ with an explicit rate.
\end{remark}

To prove our convergence results, we will show that
$|J_i| \le C_i e^{-\alpha_i \tau}$ for some $\alpha_i>0$ under certain assumption on $W$. Once this is shown, using the logarithmic Sobolev inequality, we recover
\begin{align}
    \label{eq:E1 general estimate 1}
		\frac{\diff}{\diff \tau}	E_1 (\rhos \| G) & \le - 2 E_1 (\rhos \| G ) + C_1  e^{-\alpha_1 \tau} + C_2 e^{-\alpha_2 \tau}   .
\end{align}
Solving the differential inequality, we conclude that
\begin{equation}
    \label{eq:E1 general estimate 2}
\begin{aligned}
		E_1 (\rhos \| G) \le e^{-2\tau}	E_1 (\rhos_0 \| G) + &\,C_1 F_{\alpha_1} (\tau) + C_2 F_{\alpha_2} (\tau), \\
		&\text{ where }
		F_\alpha (\tau) = e^{-2\tau} \int_0^\tau e^{(2-\alpha)s} \diff s \le \begin{dcases}
		    \frac 1{\alpha -2} e^{-2\tau} & \alpha > 2, \\
		    \tau e^{-2\tau} & \alpha = 2, \\
		    \frac 1{2-\alpha} e^{-\alpha \tau} & \alpha < 2.
		\end{dcases}
	\end{aligned}
\end{equation}

With this approach, it remains to obtain the best possible rate of decay in $J_1$ and $J_2$. In \eqref{eq:scaling classical Sobolev}, one can easily check that $\| D^\alpha \Ws(\tau) \|_{L^q} = e^{(|\alpha| - \frac{n}q) \tau}\| D^\alpha W \|_{L^q}$ has the fastest decay when $\alpha$ is the smallest (i.e. 0) and $q$ is the lowest (i.e. 1). Therefore, the best possible decay of $J_1$ and $J_2$ is obtained by moving all derivatives away from $\Ws$, and only let $\|\Ws(\tau)\|_{L^1}$ appear in the estimate (note that it requires $\|W\|_{L^1}$ be finite). Since $\| \Ws(\tau) \|_{L^1} = e^{-n\tau} \|W \|_{L^1}$, $J_1$ and $J_2$ also decay with this rate (the detailed proof will be done in \Cref{n1_conv}). Plugging this into the inequality \eqref{eq:E1 general estimate 2} for $E_1$, we get the decays: $E_1\leq e^{-\tau}$ if $n=1$, $\tau e^{-2\tau}$ if $n = 2$, and $e^{-2\tau}$ if $n \ge 3$. It is a challenging open problem to prove or disprove if these decay rates are sharp in dimensions $n=1,2$ under the assumptions of \Cref{n1_conv}.

In the next two theorems, we will use two different ways to prove the decay of $E_1$ under different assumptions of $W$. We first prove \Cref{n1_conv} assuming $W\in L^1$, which leads to the best possible rate of decay using the argument in the previous paragraph. However, the assumption $W\in L^1$ is a bit too restrictive, since it requires $W$ to have fast decay at infinity. We then prove \Cref{thm:convergence to the Gaussian L1} with weaker assumptions on $W$, where $W$ is allowed to have arbitrarily slow power-law decay such as $W(|x|)\sim |x|^{-\ee}$ for $|x|\gg 1$ for any $\ee>0$. This is done at the expense of a slower convergence rate; in \Cref{rmk_conv} we will explain why it is natural to expect slower convergence when $W$ has slower decay at infinity.

\begin{theorem}\label{n1_conv}
    Let $n\geq 1$. Assume  $W\in \mathcal W^{1,\infty}(\Rd)\cap L^1(\Rd)$ with $\nabla W \in L^n(\Rd)$. If $n\geq 2$, further assume that $\Delta W \in L^\frac{n}{2}(\Rd)$. 
    Suppose
    $ \rho_0\in L^1_+(\mathbb{R}^n)$ with $\int_{\mathbb{R}^n}\rho_0 \diff x = 1$, $E[\rho_0]<\infty$, and $\mathcal{N}_2[\rho_0]<\infty$.  Let $\rho(x,t)$ be the solution constructed in \Cref{thm:existence} with initial data $\rho_0$. Then the rescaled density $\rhos(\tau,y)$ defined in \eqref{def_tau}--\eqref{def_tilderho} satisfies
    \[
        E_1 (\rhos \| G) \le \begin{cases}
        C e^{-\tau} & n=1,\\
        C (1+\tau)  e^{-2\tau} & n=2,\\
        C  e^{-2\tau} & n\geq3,
        \end{cases}
    \]
    where $C<\infty$ depends on $\rho_0$ and $W$.
    In addition, $|\mathcal N_2 [\rhos(\tau)] - n|$ also has exponential decay in $\tau$, with the same upper bound as in $E_1$.

\end{theorem}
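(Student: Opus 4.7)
The plan is to follow the entropy--entropy dissipation strategy already laid out in the introduction of \Cref{sec:L1 relative entropy}. Differentiating $E_1(\rhos\|G)$ along the rescaled equation \eqref{eq:Fokker-Plack rho scaled} gives $\frac{\diff}{\diff\tau}E_1(\rhos\|G) = -I_1(\rhos\|G) - J_1 - J_2$ with the three quantities already defined above, and the logarithmic Sobolev inequality \eqref{LSI} provides $I_1(\rhos\|G) \ge 2 E_1(\rhos\|G)$. Once I have an exponential bound $|J_1|+|J_2| \le C e^{-n\tau}$, the Grönwall-type estimate \eqref{eq:E1 general estimate 2} yields exactly the three rates stated in the theorem (case $n=1$: $\alpha=1<2$; case $n=2$: resonance, hence the extra $\tau$ factor; case $n\ge 3$: $\alpha > 2$).

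The heart of the proof is therefore the uniform-in-time estimate $|J_1| + |J_2| \le C e^{-n\tau}$. The key idea, already announced in the excerpt, is to move \emph{all} derivatives off $\Ws$, so that only $\|\Ws\|_{L^1} = e^{-n\tau}\|W\|_{L^1}$ enters. Concretely, using $\nabla\Ws*\rhos = \Ws*\nabla\rhos$ and Young's convolution inequality,
\[
|J_2| = \Bigl|\int \nabla\rhos\cdot (\Ws*\nabla\rhos)\,\diff y \Bigr| \le \|\Ws\|_{L^1}\|\nabla\rhos\|_{L^2}^{\,2},
\]
while Cauchy--Schwarz and Young give
\[
|J_1| = \Bigl|\int \rhos\, y \cdot (\Ws*\nabla\rhos)\,\diff y\Bigr| \le \||y|\rhos\|_{L^2}\,\|\Ws\|_{L^1}\,\|\nabla\rhos\|_{L^2}
\le \sqrt{\|\rhos\|_{L^\infty}\,\mathcal{N}_2(\tau)}\,\|\Ws\|_{L^1}\,\|\nabla\rhos\|_{L^2}.
\]
All three factors on the right are uniformly bounded in $\tau$: $\mathcal{N}_2(\tau)$ by \Cref{thm_m2}; $\|\nabla\rhos\|_{L^2}$ by the $H^1$ part of \Cref{thm:propagation of L2 and H1 regularity} (which needs $\nabla W\in L^n$); and $\|\rhos\|_{L^\infty}$ by either the Sobolev embedding $H^1\hookrightarrow L^\infty$ when $n=1$, or by combining the $C^\alpha$ bound of \Cref{thm:moc} with the $L\log L$ bound of \Cref{cor_entropy} via \Cref{cor:moc + entropy implies boundedness} when $n\ge 2$ (this is where the hypothesis $\Delta W\in L^{n/2}$ is used). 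Inserting these uniform bounds together with $\|\Ws\|_{L^1}=e^{-n\tau}\|W\|_{L^1}$ gives the desired $|J_1|+|J_2|\le Ce^{-n\tau}$.

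For the second moment statement, I would use the evolution identity \eqref{dN2dt}. Recognising the symmetrised double integral there as $2J_1$ (with the same $J_1$ as above), the identity becomes $\frac{\diff}{\diff\tau}(\mathcal{N}_2-n)+2(\mathcal{N}_2-n)=-2J_1$. Integrating this linear ODE with the source bound $|J_1|\le Ce^{-n\tau}$ produces the same three rates as for $E_1$, by exactly the computation in \eqref{eq:E1 general estimate 2}.

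The main obstacle is really conceptual rather than technical: one must identify the sharp decay rate of $J_1,J_2$, which forces the strategy of pushing all derivatives off $\Ws$ so that only $\|\Ws\|_{L^1}$ appears. Any other distribution of derivatives, in view of \eqref{eq:scaling classical Sobolev}, would produce a slower decay rate and break the $e^{-n\tau}$ scaling. Once this choice is made, the estimates on $J_1$ and $J_2$ rely crucially on the uniform-in-time $L^\infty$, $H^1$, and second-moment bounds collected in \Cref{sec:decay free energy} and \Cref{sec:prop regularity}, which is precisely why those sections were developed.
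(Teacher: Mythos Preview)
Your proposal is correct and follows essentially the same approach as the paper: the same entropy--entropy dissipation framework, the same estimates for $J_1$ and $J_2$ (moving all derivatives off $\Ws$ so that only $\|\Ws\|_{L^1}=e^{-n\tau}\|W\|_{L^1}$ appears), the same reliance on the uniform-in-time $H^1$, $L^\infty$, and second-moment bounds from \Cref{sec:decay free energy}--\Cref{sec:prop regularity}, and the same ODE argument for $\mathcal{N}_2$. The only point the paper makes explicit that you leave implicit is that the uniform $H^1$ and $C^\alpha$ bounds require $\rhos(1,\cdot)\in H^1\cap C^\alpha$, which is obtained from the instant regularisation of \Cref{thm:instant regularisation} and then used as new initial data at $\tau=1$; also, for $n\ge 2$ the $L^\infty$ bound follows directly from $\|\rhos\|_{C^\alpha}$ in \Cref{thm:moc} without needing \Cref{cor:moc + entropy implies boundedness}.
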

\begin{proof}
From the instantaneous regularisation result in \Cref{thm:instant regularisation}, the solution $\rhos$ of \eqref{eq:Fokker-Plack rho scaled} is in  $C((0, \infty),\mathcal W^{k,p}(\Rd))$ for any $k\ge 0$ and $p\in [1, \infty]$. In particular, we have $\rhos(1,\cdot)\in H^1(\Rd)\cap C^\alpha(\Rd)$ for any $\alpha\in(0,1)$. Applying the uniform-in-time propagation of $H^1$ regularity proved in \Cref{thm:propagation of L2 and H1 regularity} with $\tau=1$ being the initial time yields that 
\begin{equation}
    \label{eq_h1_temp}
\sup_{\tau\geq 1}\|\rhos(\tau)\|_{H^1}<C(n,\|W\|_{L^\infty}, \|\nabla W\|_{L^n}, \|\rhos(1)\|_{H^1}, \mathcal{N}_2[\rhos(1)])<\infty\quad\text{ for }n\geq 1.
\end{equation}
In other words, we can also say that $C$ depends on $\rho_0$ and $W$ in a quite non-explicit manner.

For $n=1$, combining the Gagliardo-Nirenberg inequality
$
\| \rhos \|_{L^\infty} \le \|\nabla \rhos \|_{L^2}^{\frac 2 3} \|\rhos \|_{L^1}^{\frac 1 3}$ with \eqref{eq_h1_temp} directly yields that $\sup_{\tau\geq 1}\|\rhos(\tau)\|_{L^\infty}< C$. 

For $n\geq 2$, note that $H^1(\Rd)$ is not embedded in $L^\infty(\Rd)$. To show that $\sup_{\tau\geq 1}\|\rhos(\tau)\|_{L^\infty}< C$ for $n\geq 2$, one way is to obtain uniform-in-time propagation of the $H^k$ regularity, which we will not prove here (see \Cref{rem:propagation of Hk regularity}). Instead, let us apply the uniform-in-time  propagation of $C^\alpha$ regularity in \Cref{thm:moc} with $\tau=1$ being the initial time. It yields that for any $\alpha\in(0,1),$
\begin{equation*}
\sup_{\tau\geq 1}\|\rhos(\tau)\|_{C^\alpha}<C(n,\alpha,\|W\|_{L^\infty}, \|\nabla W\|_{L^n},\|\Delta W\|_{L^\frac{n}{2}} \|\rhos(1)\|_{C^\alpha}, \mathcal{N}_2[\rhos(1)])<\infty\quad\text{ for }n\geq 2,
\end{equation*}
which directly yields that $\sup_{\tau\geq 1}\|\rhos(\tau)\|_{L^\infty}< C$ for $n\geq 2$. We point out that so far we have not used $W\in L^1(\Rd)$.

Now that we have obtained the uniform-in-time  bounds (for all $\tau>1$) of the $L^\infty$ and $H^1$ norms of $\rhos$ for all $n\geq 1$, we will use these to prove the decay with $J_1$ and $J_2$ with the optimal rate when $W\in L^1(\Rd)$. For $J_1$, using Hölder's and Young's inequalities we have
\begin{equation}\label{J1_1d}
\begin{split}
    |J_1| &\le  \left( \int |\rhos|^2 |y|^2 \right)^{\frac 1 2} \| \nabla \Ws * \rhos \|_{L^2}  \le \| \rhos \|_{L^\infty}^{\frac 1 2} \mathcal N_2 [\rhos] ^{\frac 1 2} \| \nabla \Ws * \rhos \|_{L^2}  \\
    & \le \| \rhos \|_{L^\infty}^{\frac 1 2} \mathcal N_2 [\rhos] ^{\frac 1 2} \|\Ws\|_{L^1} \| \nabla \rhos \|_{L^2} 
    \le C e^{-n\tau},
\end{split}
\end{equation}
where in the last inequality we use the uniform-in-time bound on $\mathcal{N}_2[\rhos(\tau)]$ in \Cref{thm_m2}, as well as the fact that $\|\Ws\|_{L^1}=\|W\|_{L^1}e^{-n\tau}$ from \eqref{eq:scaling classical Sobolev}.
Likewise we can estimate $J_2$ as
\begin{equation}
    |J_2|\le \|\nabla \rhos\|_{L^2} \| \nabla \Ws * \rhos \|_{L^2} \le \| \nabla \rhos \|_{L^2}^2 \| \Ws \|_{L^1} \le C e^{-n\tau}.
\end{equation}
Plugging these estimates on $J_1$ and $J_2$ into \eqref{eq:E1 general estimate 1}, we obtain \eqref{eq:E1 general estimate 2} with $\alpha_1=\alpha_2=n$, finishing the proof for $E_1 (\rhos \| G)$.

Finally, note that the convergence of $|\mathcal N_2 (\tau)-n|$ immediately follows from the above estimates for $J_1$. In fact, from \eqref{dN2dt} we have 
\[
\frac \diff{\diff\tau} (\mathcal N_2 - n) = -2(\mathcal N_2 - n) - 2 J_1,
\]
and solving this differential equation gives
\[\mathcal N_2 (\tau) - n = e^{-2(\tau -1)} (\mathcal N_2 (1) - n) - 2 e^{-2\tau} \int_1^\tau e^{2\tau'} J_1(\tau') \diff\tau'.
\]
Using
\eqref{J1_1d} into the right hand side gives the exponential decaying bound of $|\mathcal N_2 [\rhos]-n|$, finishing the proof.
\end{proof}

\begin{theorem}
	\label{thm:convergence to the Gaussian L1}
Let $n\geq 1$. Assume $ W \in \mathcal W^{1,\infty} (\Rd)$ satisfies $\nabla W\in L^n(\Rd)$. If $n\geq 2$, further assume that $\Delta W \in L^\frac{n}{2}(\Rd)$. 
Suppose $ \rho_0\in L^1_+(\mathbb{R}^n)$ with $\int_{\mathbb{R}^n}\rho_0 \diff x = 1$, $E[\rho_0]<\infty$, and $\mathcal{N}_2[\rho_0]<\infty$.  Let $\rho(x,t)$ be the solution constructed in \Cref{thm:existence} with initial data $\rho_0$. Then the rescaled density $\rhos(\tau,y)$ satisfies the following:
\begin{enumerate}
\item[(a)] For $n=1$, if in addition we assume that $(-\Delta )^{\frac{1}{2}-\ee}W \in L^1(\mathbb{R})$ for some $\ee \in (0,\frac{1}{2})$, then for all $\tau\geq 1$ we have that
\begin{equation}
	\label{eq_e1_1d}
    E_1 (\rhos \| G) \le C \Big ( e^{-2\tau } + \| (-\Delta )^{\frac{1}{2}-\ee} W \|_{L^{1}} F_{ 2\ee} (\tau)  \Big ) \leq Ce^{-2\ee \tau}
\end{equation}
\[
|\mathcal N_2 [\rhos] - n| \leq Ce^{-2\ee \tau},
\]
where $C<\infty$ depends on $\rho_0$ and $W$.

	    \item[(b)] For $n= 2$, if in addition we assume that $\nabla W \in L^{p_1}(\mathbb{R}^2)$ for some $p_1\in[1,2)$, then for all $\tau \ge 1$ we have that
	\begin{equation}
	\label{eq_e1_temp1}
    E_1 (\rhos \| G) \le C \Big ( e^{-2\tau } + \| \nabla W \|_{L^{p_1}} F_{ \frac{2}{p_1} - 1} (\tau)  \Big ) \leq Ce^{(\frac{2}{p_1}-1)\tau}
\end{equation}
\[
|\mathcal N_2 [\rhos] - n|  \leq Ce^{(\frac{2}{p_1}-1)\tau},
\]
where $C<\infty$ depends on $\rho_0$ and $W$.

\item[(c)] For $n\geq 3$, if in addition we assume that $\nabla W \in L^{p_1}(\Rd)$ and $\Delta W \in L^{p_2}(\Rd)$ for some $p_1\in[1,n)$ and $p_2 \in [1,\frac{n}{2})$, then for all $\tau \ge 1$ we have that
	\begin{equation}
	\label{eq:convergence to the Gaussian L1 estimate 1}
    E_1 (\rhos \| G) \le C \Big ( e^{-2\tau } + \| \nabla W \|_{L^{p_1}} F_{ \frac{n}{p_1} - 1} (\tau) + \|\Delta W\|_{L^{p_2}} F_{\frac{n}{p_2}-2}(\tau) \Big ), 
\end{equation}
\[
|\mathcal N_2 [\rhos] - n| \le C \big ( e^{-2\tau } + \| \nabla W \|_{L^{p_1}} F_{ \frac{n}{p_1} - 1} (\tau) \big),
\]
where $C<\infty$ depends on $\rho_0$ and $W$.
\end{enumerate}
In particular, for $n\geq 2$
if $ W \in \mathcal W^{1,\infty} (\Rd)$, $\nabla W \in  L^{n-\ee} (\Rd)$, $\Delta W \in  L^{\frac n 2} (\Rd)$ (and also $\Delta W \in  L^{\frac n 2 - \ee} (\Rd)$ if $n\geq 3$) for some $\ee > 0$,
and $\rho_0$ satisfies the above assumptions, then the rescaled solution $\rhos$ satisfies
$E_1 (\rhos \| G) \to 0$ and $\mathcal{N}_2[\rhos]\to n$ as $\tau\to\infty$.
\end{theorem}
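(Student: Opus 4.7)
I would start from the identity
\[
\tfrac{d}{d\tau} E_1(\rhos \| G) = -I_1(\rhos\|G) - J_1 - J_2,
\]
derived just above the statement, apply the logarithmic Sobolev inequality $I_1(\rhos\|G)\ge 2 E_1(\rhos\|G)$ to absorb the Fisher information, and then aim at exponential-decay bounds $|J_i(\tau)|\le C_i e^{-\alpha_i \tau}$ with the exponents prescribed by (a)--(c), concluding through the Grönwall-type inequality \eqref{eq:E1 general estimate 2}. For the second-moment claim I would use that \eqref{dN2dt} rewrites as $(\mathcal N_2 - n)' = -2(\mathcal N_2 - n) - 2J_1$, so the same bound on $J_1$ yields a matching decay of $|\mathcal N_2 - n|$. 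A preliminary step, exactly mirroring the opening of the proof of \Cref{n1_conv}, is to obtain the uniform-in-$\tau$ bounds $\|\rhos(\tau)\|_{L^\infty} + \|\rhos(\tau)\|_{H^1} + \mathcal N_2[\rhos(\tau)] \le C$ for all $\tau\ge 1$, using \Cref{thm:propagation of L2 and H1 regularity} and, for $n\ge 2$, the $C^\alpha$ bound from \Cref{thm:moc} (which needs $\Delta W \in L^{n/2}$, hence the standing hypothesis), together with \Cref{thm_m2} and \Cref{cor_entropy}.

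\textbf{Cases (b) and (c).} For $n\ge 2$ the estimates are direct Hölder--Young plus the scaling \eqref{eq:scaling classical Sobolev}. The first remainder obeys
\[
|J_1| \le \Big(\!\int |y|\rhos \diff y\Big)\|\nabla\Ws * \rhos\|_{L^\infty} \le C\,\mathcal N_2^{1/2}\,\|\nabla\Ws\|_{L^{p_1}}\|\rhos\|_{L^{p_1/(p_1-1)}},
\]
which gives decay $e^{-(n/p_1-1)\tau}$ since $\|\nabla\Ws\|_{L^{p_1}} = e^{(1-n/p_1)\tau}\|\nabla W\|_{L^{p_1}}$ and $\rhos\in L^1\cap L^\infty$ uniformly. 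For $n\ge 3$ a parallel bound
$|J_2| \le \|\rhos\|_{L^1}\|\Delta\Ws\|_{L^{p_2}}\|\rhos\|_{L^{p_2/(p_2-1)}}$
yields decay $e^{-(n/p_2-2)\tau}$. For $n=2$ only $\nabla W$ is integrable, so I would integrate $J_2$ by parts to $\int\nabla\rhos\cdot(\nabla\Ws*\rhos)$ and bound it by $\|\nabla\rhos\|_{L^2}\|\nabla\Ws\|_{L^{p_1}}\|\rhos\|_{L^q}$ with $\tfrac{1}{q}=\tfrac32-\tfrac1{p_1}$, absorbing $\|\nabla\rhos\|_{L^2}$ into the uniform $H^1$ bound; this produces the same rate $e^{-(2/p_1-1)\tau}$ as $J_1$. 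Feeding these decays into \eqref{eq:E1 general estimate 2} gives \eqref{eq_e1_temp1} and \eqref{eq:convergence to the Gaussian L1 estimate 1}, and solving $(\mathcal N_2-n)' = -2(\mathcal N_2-n) - 2J_1$ gives the second-moment claims.

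\textbf{Case (a), the main obstacle.} In $n=1$, $\nabla W \in L^1$ gives $\|\nabla\Ws\|_{L^1}=\|\nabla W\|_{L^1}$ with no decay, so the direct scaling argument breaks and one must use the extra hypothesis $(-\Delta)^{\frac12-\ee}W\in L^1(\mathbb{R})$. My plan is to pass to Fourier and exploit the split
\[
\|\nabla\Ws * \rhos\|_{L^2}^2 = \int|\xi|^2 |\hat\Ws(\xi)|^2|\hat\rhos(\xi)|^2 \diff\xi \le \big\||\xi|^{1-2\ee}\hat\Ws\big\|_{L^\infty}^2\,\|\rhos\|_{\dot H^{2\ee}}^2.
\]
By Hausdorff--Young, $\||\xi|^{1-2\ee}\hat\Ws\|_{L^\infty} \le \|(-\Delta)^{\frac12-\ee}\Ws\|_{L^1} = e^{-2\ee\tau}\|(-\Delta)^{\frac12-\ee}W\|_{L^1}$, while $\|\rhos\|_{\dot H^{2\ee}}$ is controlled by the uniform $H^1$ bound thanks to $2\ee<1$. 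Then Cauchy--Schwarz gives $|J_1| \le \|y\rhos\|_{L^2}\|\nabla\Ws*\rhos\|_{L^2}\le Ce^{-2\ee\tau}$, and integration by parts yields $J_2 = \int\partial\rhos\cdot(\nabla\Ws*\rhos)$ with the same bound. Inserting into \eqref{eq:E1 general estimate 2} with $\alpha = 2\ee < 2$ gives \eqref{eq_e1_1d}. This Fourier splitting---placing the $e^{-2\ee\tau}$ on $\Ws$ while keeping $\rhos$ in a fractional Sobolev space strictly below $H^1$---is the only genuinely new ingredient, and I expect it to be the main technical point.

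\textbf{The ``in particular'' statement.} For $n\ge 2$, the hypotheses $\nabla W\in L^{n-\ee}$ (and, when $n\ge 3$, $\Delta W \in L^{n/2-\ee}$) allow me to apply (b) or (c) with $p_1=n-\ee$, $p_2=n/2-\ee$; the corresponding $F_{\alpha_i}(\tau)\to 0$ as $\tau\to\infty$, so $E_1(\rhos\|G)\to 0$ and $\mathcal N_2[\rhos]\to n$, which is the desired conclusion.
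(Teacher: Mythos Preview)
Your proposal is correct and follows essentially the same route as the paper: establish the uniform $L^\infty$, $H^1$, and $\mathcal N_2$ bounds for $\tau\ge 1$ from \Cref{thm:propagation of L2 and H1 regularity}, \Cref{thm:moc}, and \Cref{thm_m2}; bound $J_1,J_2$ by exponentially decaying terms via Young/H\"older and the scaling \eqref{eq:scaling classical Sobolev}; feed these into \eqref{eq:E1 general estimate 1}--\eqref{eq:E1 general estimate 2}; and handle $\mathcal N_2$ through the ODE $(\mathcal N_2-n)'=-2(\mathcal N_2-n)-2J_1$. The only cosmetic differences are that (i) for $J_1$ in parts (b)--(c) the paper bounds $\|\nabla\Ws*\rhos\|_{L^2}$ (splitting into $p_1<2$ and $p_1\ge 2$) rather than $\|\nabla\Ws*\rhos\|_{L^\infty}$ as you do---your version is in fact slightly cleaner and avoids the case split---and (ii) in part (a) the paper writes the fractional splitting as a convolution Young inequality $\|\nabla\Ws*\rhos\|_{L^2}\le \|(-\Delta)^{\frac12-\ee}\Ws\|_{L^1}\|(-\Delta)^{\ee}\rhos\|_{L^2}$, which is exactly your Plancherel computation in disguise.
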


\begin{proof}[Proof of \Cref{thm:convergence to the Gaussian L1}]
To begin with, note that the same argument as in the first half of the proof of \Cref{n1_conv} gives 
\[
\sup_{\tau>1}\|\rhos(\tau)\|_{L^\infty}\leq C\quad\text{ and } \quad\sup_{\tau>1}\|\rhos(\tau)\|_{H^1}\leq C,
\]
where the constant $C$ again depends on $\rho_0$ and $W$ in a quite non-explicit manner. Note that we only need $ W \in \mathcal W^{1,\infty} (\Rd)$, $\nabla W\in L^n(\Rd)$, and $\Delta W \in L^\frac{n}{2}(\Rd)$ (for $n\geq 2$) to get these bounds; in particular they do not rely on the extra assumptions in parts
(a,b,c).

Next we will prove part (a) by obtaining decay estimates for $J_1$ and $J_2$ for $\tau>1$, under the additional assumption that $(-\Delta )^{\frac{1}{2}-\ee}W \in L^1(\mathbb{R})$ for some $\ee \in (0,\frac{1}{2})$. We start with controlling $J_1$ as in the first line of \eqref{J1_1d}, which yields $|J_1|\leq C\|\nabla\Ws*\rhos\|_{L^2}$, thus 
\begin{equation} \label{J1-1d}
	|J_1| \leq C\|\nabla\Ws*\rhos\|_{L^2} \le C\|(-\Delta)^{\frac12-\ee} \Ws \|_{L^1} \| (-\Delta)^{\ee}\rhos \|_{L^{2}} 
	\le C \|(-\Delta)^{\frac12-\ee} W \|_{L^1}  e^{-2\ee\tau},
\end{equation}
where in the last inequality we used \eqref{eq:scaling classical Sobolev}, and that $\sup_{\tau>1}\|\rhos(\tau)\|_{H^1}\leq C$.
For $J_2$,  we have
\[
|J_2|\le \|\nabla \rhos\|_{L^2} \| \nabla \Ws * \rhos \|_{L^2} \leq C \|(-\Delta)^{\frac12-\ee} W \|_{L^1}  e^{-2\ee\tau},
\]
where we used the $J_1$ estimate to control $\|\nabla\Ws*\rhos\|_{L^2}$, and we also used that $\sup_{\tau>1}\|\rhos(\tau)\|_{H^1}\leq C$.
Plugging these into \eqref{eq:E1 general estimate 1} gives \eqref{eq_e1_1d}. The decay estimate for $|\mathcal{N}_2[\rhos]-n|$ follows in the same way as the last paragraph of the proof of  \Cref{n1_conv}.

We now move on to part (b), under the assumption that
$\nabla W\in L^{p_1}(\Rd)$ for some $p_1\in[1,2)$. Again, using that $|J_1|\leq C\|\nabla\Ws*\rhos\|_{L^2}$, for $p_1 \in [1,2)$ we have
\begin{equation} \label{J1-0}
	|J_1| \le C \| \nabla \Ws * \rhos \|_{L^2}  \le C\| \nabla \Ws \|_{L^{p_1}} \| \rhos \|_{L^{q_1}} 
	\le C \| \nabla W \|_{L^{p_1}}  e^{(1-\frac {n}{p_1})\tau}.
\end{equation}
For $J_2$, taking $q_1 = \frac{2p_1}{3p_1 - 2} \in (1,2]$ 
we have
\[
|J_2|\le \|\nabla \rhos\|_{L^2} \| \nabla \Ws * \rhos \|_{L^2} \leq \|\nabla \rhos\|_{L^2} \| \nabla \Ws \|_{L^{p_1}} \| \rhos \|_{L^{q_1}} \le C \| \nabla W \|_{L^{p_1}}  e^{(1-\frac {n}{p_1})\tau},
\]
which has the same decay rate as the $J_1$ estimate.
Plugging these into \eqref{eq:E1 general estimate 1} gives \eqref{eq_e1_temp1}. Again, the decay estimate for $|\mathcal{N}_2[\rhos]-n|$ follows in the same way as the last paragraph of the proof of  \Cref{n1_conv}.

To prove part (c), we start with the $J_1$ estimate. If $p_1\in [1,2)$, the estimate \eqref{J1-0} still holds. And if $p_1\geq 2$, we control $J_1$ as
\begin{equation}\label{J1_temp}
	\left|J_1 \right| \le \|\rhos y\|_{L^{q_1}} \| \nabla \Ws * \rhos \|_{L^{p_1}}  \le \| \rhos \|_{L^{\frac{p_1}{p_1-2}}}^{\frac 1 2} \mathcal N_2 [\rhos] ^{\frac 1 2} \| \nabla \Ws\|_{L^{p_1}} \|\rhos\|_{L^1} \leq C \| \nabla W \|_{L^{p_1}}  e^{(1-\frac {n}{p_1})\tau},
\end{equation}
which gives the same decay rate as \eqref{J1-0}.
For $J_2$ we apply the usual Young inequality
\begin{align*}
	|J_2| \le \|\rhos\|_{L^1} \|\Delta (\Ws * \rhos)\|_{L^\infty} 
	\le C \| \Delta \Ws \|_{L^{p_2}}
	\le C \| \Delta W \|_{L^{p_2}} e^{(2 - \frac n {p_2}) \tau}.
\end{align*}
Plugging these into \eqref{eq:E1 general estimate 1} gives \eqref{eq:convergence to the Gaussian L1 estimate 1}. Again, the decay estimate for $|\mathcal{N}_2[\rhos]-n|$ follows in the same way as above.

Once we finish part (b,c), the last statement in the theorem follows as a direct consequence, since these assumptions of $W$ are covered by part (b) for $n=2$, and part (c) for $n\geq 3$. This finishes the proof.
\end{proof}

The proof of \Cref{thm:decay} follows directly from from \eqref{thm:convergence to the Gaussian L1} and \eqref{CK} (using the Csiszar-Kullback inequality and the change of variables in \eqref{eq:rhos}).

\begin{remark}\label{rmk_conv} Note that the assumptions in \Cref{thm:convergence to the Gaussian L1} allows $W$ to have arbitrarily slow power-law decay at infinity, which is much less restrictive than the $W\in L^1(\Rd)$ assumption in \Cref{n1_conv}. To see this, let $W \in C^\infty(\Rd)$ be a smooth potential with $W = -|x|^{-\ee}$ in $B(0,1)^c$ for some $0<\ee\ll 1$. 
For $n=1$, the definition of fractional Laplacian gives $(-\Delta)^{\frac{1}{2}-\delta}W\sim -|x|^{-\epsilon+2\delta-1}$ for $|x|>1$, thus $(-\Delta)^{\frac{1}{2}-\delta}W \in L^1(\mathbb{R})$ for $\delta\in(0,\frac{\epsilon}{2})$, which satisfies the assumptions in \Cref{thm:convergence to the Gaussian L1}(a).
For $n\geq 2$, one can easily check that $\nabla W \in L^{p_1}(\Rd)$ for all $p_1 > \frac{n}{1+\ee}$ and $\Delta W\in L^{p_2}(\Rd)$ for all $p_2>\frac{n}{2+\ee}$, thus there exists $p_1\in(\frac{n}{1+\ee},n)$ (and $p_2\in(\frac{n}{2+\ee},\frac{n}{2})$ if $n\geq 3$) that satisfy the assumptions in \Cref{thm:convergence to the Gaussian L1}(b,c).
Applying \Cref{thm:convergence to the Gaussian L1} gives $E_1 (  \rhos \| G  )\to 0$ for any $\ee>0$, although the decay rate goes to 0 as $\ee\to 0$. 

From the above example, the assumptions on $W$ in \Cref{thm:convergence to the Gaussian L1} is sharp in the sense that $W=\log|x|$ is the $\ee\to 0$ limit of $W_\ee=\frac{-|x|^{-\ee}+1}{\ee}$, but for such $W$ 
(even if we modify it to be smooth near the origin) it is well-known that the steady state for the rescaled equation \eqref{eq:Fokker-Plack rho scaled} is different from the Gaussian, thus $E_1 (  \rhos \| G  )$ has no decay as $\tau\to\infty$. For this reason, as $p_1$ and $p_2$ approach $n$ and $\frac{n}{2}$ respectively in \Cref{thm:convergence to the Gaussian L1}, it is natural to expect that the convergence becomes arbitrarily slow.
\end{remark}

\subsection{\texorpdfstring{$L^2$}{L2} relative entropy }
\label{sec:L2 rel entropy}
We also look at the convergence of the $L^2$ relative entropy under different assumptions on the interaction potential. 
In order to study the $L^2$ convergence, we define the $L^2$ relative entropy as
\begin{equation*}
	E_2 (  \rhos \| G  ) = 	\int_{\mathbb R^n} \left|\rhos - G \right|^2 G^{-1} \diff y= \int_{\mathbb R^n}  \left| \frac \rhos G - 1 \right|^2 G  \diff y.
\end{equation*}

	Recall that $G$ solves the stationary Fokker-Planck equation. In fact, notice that we can rewrite \eqref{eq:Fokker-Plack rho scaled} as
	\begin{equation*}
		\frac{\partial \rhos}{\partial\tau} = \diver \left(  G \nabla \frac \rhos G  +  \rhos \nabla \Ws * \rho  \right) .
	\end{equation*}
	It is natural that $\rhos / G$ will provide good estimates. In fact, it well-known that the space $L^2 (G^{-1} \diff y)$ is natural because it makes the Fokker-Planck operator self-adjoint. 
	Notice that that $G^{-1} \ge (2\pi)^{\frac n 2} > 0$ so the $L^2(G^{-1}\diff y)$ convergence
	is stronger than the usual $L^2$. 
\begin{theorem}
	\label{thm:convergence to the Gaussian L2}Let $n\geq 2$.
	Let $\rhos$ be a classical solution of \eqref{eq:Fokker-Plack rho scaled} for $\tau \ge 1$
	such that 
	\begin{equation*}
	    \sup_{\tau \ge 1} \|\rhos \|_{L^\infty}  < + \infty
	\end{equation*}
	and $\nabla W \in  L^1 (\mathbb R^n) $.
	Then, 
	\begin{equation*}
			E_2 (  \rhos \| G  )  \le 
			\begin{dcases} 
			    C e^{-2\tau} & n \ge 3,\\ 
			    C (1+\tau)^2 e^{-2\tau}  & n = 2.
			\end{dcases}
	\end{equation*}
	as $\tau \to +\infty$.
	In particular, if $W$ satisfies $ W \in \mathcal W^{1,\infty} (\Rd)$ with $\nabla W \in L^1(\Rd)$ and $ \rho_0\in L^1_+(\mathbb{R}^n)$ satisfies $\int_{\mathbb{R}^n}\rho_0 \diff x = 1$, $E[\rho_0]<\infty$, and $\mathcal{N}_2[\rho_0]<\infty$, then the solution constructed in \Cref{thm:existence} is such that $E_2(\rhos \| G) \to 0$ with the above rates.
\end{theorem}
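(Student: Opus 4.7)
I would begin by introducing $h := \rhos/G - 1$, so that $E_2 = \int_\Rd h^2 G \diff y$ and, crucially, $\int_\Rd h G \diff y = \int_\Rd \rhos \diff y - \int_\Rd G \diff y = 0$. Using $\nabla G = -y G$, the rescaled equation \eqref{eq:Fokker-Plack rho scaled} is rewritten in the divergence form
\begin{equation*}
\partial_\tau \rhos = \diver\bigl(G \nabla h\bigr) + \diver\bigl(\rhos \nabla(\Ws * \rhos)\bigr).
\end{equation*}
Differentiating $E_2$ in $\tau$, substituting, and integrating by parts yields
\begin{equation*}
\tfrac{1}{2}\tfrac{\diff}{\diff \tau} E_2 = -D - R, \qquad D := \int_\Rd G\,|\nabla h|^2 \diff y, \qquad R := \int_\Rd \nabla h \cdot \rhos\, \nabla(\Ws*\rhos)\diff y.
\end{equation*}

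\textbf{Spectral gap and control of the cross term.} The Gaussian Poincaré inequality applied to the zero-mean function $h$ gives $E_2 \le D$. For the perturbation $R$, the plan is to combine the uniform $L^\infty$ bound on $\rhos$ with Young's convolution inequality and the scaling identity \eqref{eq:scaling classical Sobolev} to get
\begin{equation*}
\|\nabla(\Ws*\rhos)\|_{L^\infty} \le \|\nabla \Ws\|_{L^1}\|\rhos\|_{L^\infty} \le C\,e^{(1-n)\tau}.
\end{equation*}
Splitting $|\nabla h|\,\rhos = (|\nabla h|\,G^{1/2})(\rhos\,G^{-1/2})$ and applying Cauchy–Schwarz, together with the identity $\int_\Rd \rhos^2 G^{-1} \diff y = E_2 + 1$, then yields
\begin{equation*}
|R| \le \|\nabla(\Ws*\rhos)\|_{L^\infty}\,D^{1/2}(E_2+1)^{1/2} \le C\,e^{(1-n)\tau}\,D^{1/2}(E_2+1)^{1/2}.
\end{equation*}

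\textbf{Gronwall and decay rates.} Absorbing via Young's inequality $|R| \le \eta D + (4\eta)^{-1}C^2 e^{-2(n-1)\tau}(E_2+1)$ and invoking $E_2 \le D$, one obtains, for any small $\eta \in (0,1)$ and $\tau \ge 1$, the differential inequality
\begin{equation*}
\dot E_2 \le -2(1-\eta)\,E_2 + C\eta^{-1}\,e^{-2(n-1)\tau}(E_2+1).
\end{equation*}
For $n \ge 3$ the forcing decays at rate $2(n-1)\ge 4>2$, so the integrating-factor argument on $e^{2(1-\eta)\tau}E_2$ with $\eta$ fixed small gives the clean bound $E_2 \le Ce^{-2\tau}$. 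For $n=2$ the forcing decays at exactly $e^{-2\tau}$, resonating with the homogeneous rate; making the choice $\eta = \eta(\tau) \sim \tau^{-1}$ in the Young split balances the two factors of $\eta^{-1}$ (one from the Young coefficient, one from integrating the forcing) and produces the sharp polynomial correction $C(1+\tau)^2 e^{-2\tau}$. The main technical obstacle is this dimension-two resonance: the quadratic power of $(1+\tau)$ is precisely what emerges from the double appearance of $\eta^{-1}$, and tracking both the $E_2$-dependent and the constant parts of $(E_2+1)$ through the Gronwall is essential for this sharp rate.

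\textbf{Application to the constructed solutions.} For the ``in particular'' statement, $\nabla W \in L^1 \cap L^\infty \subset L^n$ by interpolation, so \Cref{thm:propagation of L2 and H1 regularity} yields a uniform-in-time $H^1$ bound on $\rhos$ for $\tau \ge 1$. To upgrade this to the required uniform $L^\infty$ bound, I would mollify $W$ so that $\Delta W_\varepsilon \in L^{n/2}$ for $\varepsilon > 0$ and apply the uniform $C^\alpha$ estimate of \Cref{thm:moc} to the approximating problems; combined with the $L^1$ continuous dependence on $W$ from \Cref{thm:existence}, passing to the limit $\varepsilon \to 0$ provides the uniform $L^\infty$ bound on $\rhos$ needed to invoke the first part of the theorem, and the resulting decay rates transfer back from the rescaled variables via \eqref{def_tau}--\eqref{def_tilderho}.
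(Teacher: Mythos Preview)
Your approach coincides with the paper's almost line for line: the energy identity $\tfrac12\dot E_2 = -D - R$, the Cauchy--Schwarz split $|R|\le Ce^{(1-n)\tau}D^{1/2}(E_2+1)^{1/2}$ via $\|\nabla\Ws\|_{L^1}=e^{(1-n)\tau}\|\nabla W\|_{L^1}$, the Gaussian Poincar\'e inequality $E_2\le D$, and the ODE analysis with $\eta(\tau)\sim\tau^{-1}$ for $n=2$ are all exactly what the paper does. There is, however, a small slip in the $n\ge3$ case: with $\eta$ \emph{fixed}, the integrating factor $e^{2(1-\eta)\tau}$ only yields $E_2\le Ce^{-2(1-\eta)\tau}$, not the claimed $Ce^{-2\tau}$, because the constant in front blows up as $\eta\to0$. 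The paper handles this by first showing $E_2$ is bounded (so $(E_2+1)^{1/2}\le C$) and then taking a time-dependent $\eta$ for $n\ge3$ as well --- specifically $\eta\sim e^{-\tau}$, leading to the integrating factor $e^{2\tau+e^{-\tau}}$ --- which recovers the sharp rate. You already know this trick from your $n=2$ case, so the fix is immediate.

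Your mollification argument for the ``in particular'' clause has a genuine gap: the $C^\alpha$ bound of \Cref{thm:moc} depends on $C_W$, which includes $\|\Delta W_\varepsilon\|_{L^{n/2}}$, and this norm is \emph{not} uniformly bounded as $\varepsilon\to0$ when $\Delta W\notin L^{n/2}$. So the approximating $C^\alpha$ bounds blow up and nothing survives the limit. The paper itself is terse here; the honest route is either to assume the propagation of $H^k$ regularity alluded to in \Cref{rem:propagation of Hk regularity} (which requires only $\nabla W\in L^n$, already available by interpolation from $L^1\cap L^\infty$), or to simply leave the uniform $L^\infty$ bound as a hypothesis to be verified separately.
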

\begin{remark}
Notice that in this setting we do not use the uniform-in-time bound of $\mathcal N_2 [\rhos]$, nor integrability of $D^2 W$.
\end{remark}

\begin{proof}[Proof of \Cref{thm:convergence to the Gaussian L2}]
We have
\begin{align*}
	\frac 1 2 \frac{\diff }{\diff \tau} \int_{\mathbb R^n}  \left| \frac \rhos G - 1 \right|^2 G  \diff y 
	&= %
	 -  \int_{\mathbb R^n}  \nabla \frac \rhos G   \cdot   \left(  G \nabla \frac \rhos G  +  \rhos \nabla \Ws * \rho  \right) \diff y\\
	& = -  \int_{\mathbb R^n}  \left|  \nabla \frac \rhos G \right|^2   G  \diff y + \int_\Rd   \rhos  \nabla \frac \rhos G \cdot \nabla \Ws * \rho   \diff y .
\end{align*}
We point out that
\begin{align*}
    E_2 (\rhos \| G ) &= \int_\Rd \left| \frac \rhos {\sqrt G} - \sqrt{G}  \right|^2 dy 
    =  \int_\Rd \left| \frac \rhos {\sqrt G}   \right|^2 dy - 1. 
\end{align*}
We now write
\begin{align*}
	\left|  \int_\Rd   \rhos  \nabla \frac \rhos G \cdot \nabla \Ws * \rho   \diff y \right|
	&\le  \int_\Rd   \frac{  \rhos } {\sqrt G } \sqrt G \left|   \nabla \frac \rhos G \right| \left|  \nabla \Ws * \rho  \right|  \diff y 
	\le \left \| \frac{ \rhos }{\sqrt G } \right \|_{L^2} \left\| \sqrt G  \nabla \frac \rhos G  \right \| _{L^2} \| \nabla \Ws *\rhos \|_{L^\infty}  \\
	&= ( E_2 (\rhos \| G) + 1 )^{\frac 1 2} \left\| \sqrt G  \nabla \frac \rhos G  \right \| _{L^2} \| \nabla \Ws \|_{L^1} \| \rhos \|_{L^\infty}.
\end{align*}
Hence
\begin{align}
\label{eq:thm 7.6 ODE 2}
	\frac 1 2 \frac{\diff }{\diff \tau}   E_2 (\rhos \| G)  & \le -  \int_\Rd   \left|  \nabla \frac \rhos G \right|^2   G  \diff y +  ( E_2 (\rhos \| G) + 1 )^{\frac 1 2} \left\| \sqrt G  \nabla \frac \rhos G  \right \| _{L^2} \| \nabla \Ws \|_{L^1} \| \rhos \|_{L^\infty}. 
\end{align}
The last term converges to zero for all $n>1$, because of the scaling
$
	\| \nabla \Ws  \|_{L^1} \le e^{ (1 - n) \tau } \| \nabla W \|_{L^1}.
$
Let us define $w = \frac \rhos G$. In the rescaled heat equation, this converts the Fokker-Planck into the Ornstein-Uhlenbeck semigroup. We recall the Gaussian Poincaré inequality
\begin{equation} \label{GP}
\int_{\mathbb R^n}  \left| w  - 1 \right|^2 G  \diff y =	\int_\Rd |w|^2 G \diff y - \left(  \int_\Rd w G \diff y \right) ^2 \le \int_\Rd |\nabla w|^2 G \diff y\,,
\end{equation}
noticing that $G$ and $\rhos = w G$ have mass equal to 1.
To simplify the notations, let 
\[
u(\tau) :=E_2 (\rhos \| G), \quad v(\tau) := \int_\Rd   \left|  \nabla \frac \rhos G \right|^2   G  \diff y,
\]
and under these notations \eqref{GP} becomes $ 0\le u \le v$. We can also rewrite \eqref{eq:thm 7.6 ODE 2} in these new notations as
\[
\frac{\diff }{\diff \tau} u \le -2 v + C e^{(1-n)\tau} (u+1)^{\frac 12} v^{\frac 12}. 
\] 
Let $\tau_0>1$ be such that $Ce^{(1-n)\tau_0} < 1$, and we claim that $\sup_{\tau \ge \tau_0} u(\tau) \leq \max \{1 , u(\tau_0) \}$. In fact, if $u(\tau) \ge 1$ for some $\tau\geq \tau_0$, using the facts that $Ce^{(1-n)\tau}<1$ and $0\le u\le v$, we have $\frac{\diff u}{\diff \tau} \le -2v + (u+1)^{\frac{1}{2}} v^{\frac12} \leq -2v + \sqrt{2}v< 0$, proving the claim.
Using this estimate, we have that
\[
\frac{\diff }{\diff \tau} u \le -2 v + C e^{(1-n)\tau} v^{\frac 12}\quad
\text{ for all }\tau>\tau_0. 
\]

If $n= 2$, we isolate the $v^{\frac 12}$ in the above last term and use Young's inequality to obtain the following for 
$\tau>\tau_0$:
\[
\frac{\diff }{\diff \tau} u \le -\big(2 - \frac 1{1+\tau}\big) v + C e^{-2\tau} (1+\tau).
\]
Applying the inequality $0\leq u\leq v$ to the right hand side, and multiplying both sides of the inequality by the obvious integrating factor $A(\tau) = \frac {e^{2\tau}}{1+\tau} $, we have 
\[
\frac{\diff }{\diff \tau} (A u) \le C \ \Longrightarrow u(\tau) \le C A(\tau)^{-1} (1+ \tau) \le C (1+\tau)^2 e^{-2\tau}. 
\]

If $n\ge 3$, we proceed slightly differently in isolating $v$ using Young's inequality again
\[
\frac{\diff }{\diff \tau} u \le -(2 - e^{-\tau}) v + C e^{(3 -2n)\tau}.
\]
Using the corresponding integrating factor $A(\tau) = e^{2\tau + e^{-\tau}}$ we obtain 
\[
\frac{\diff }{\diff \tau} (A u) \le C e^{(5-2n)\tau}  \ \Longrightarrow u(\tau) \le C A(\tau)^{-1} \le C  e^{-2\tau}. 
\]
This completes the proof.

\end{proof}

\begin{remark}
	We point out that the $L^2$ norm of $\rho - U$ decays faster than that of $\rho$ itself.
	Using the change of variables \eqref{eq:rhos}, we can translate the result above to
	\begin{equation*}
			\int_{\mathbb R^n}  \left|  {\rho} - U \right|^2  \diff x = (2t + 1)^{- \frac n 2} \int_{ \Rd } |\rhos - G|^2 \diff y \le 
			\begin{dcases}
				C t^{-\frac n 2 - 1} &  n \ge 3, \\
				C t^{- \frac n 2 -1 } (1 + \log(1+2t))^2  & n = 2.
			\end{dcases}
	\end{equation*}
	On the other hand, the $L^2$ decay of $\rho$ itself is only
	\begin{equation*}
		\int_{\mathbb R^n} \rho^2  \diff x = (2t + 1)^{-\frac n 2} \int_{\mathbb R^n} \rhos^2 \diff y    \sim C t ^{-\frac n 2}.
	\end{equation*}
\end{remark}

\appendix
\section{Some comments on fractional Sobolev spaces}
\label{sec:fractional Young-Sobolev inequality}
We recall the definition of the Sobolev-Slobodecki semi-norm for $s \in (0,1)$ below. For $p\in[1,\infty)$, let us define
\begin{equation*}
    [f]_{\mathcal W^{s,p}} = \left( s(1-s) \int_\Rd \int_\Rd \frac{|f(x) - f(y)|^{p}}{|x-y|^{n+sp}} \diff x \diff y \right)^{\frac 1 p},
\end{equation*}
and for $p=\infty$ let
\[
[f]_{\mathcal{W}^{s,\infty}} = \sup_{x\neq y} \frac{|f(x)-f(y)|}{|x-y|^s}.
\]
For $s \in (k,k+1)$ we define
\begin{equation*}
    [ u ]_{\mathcal W^{s,p}}^p =  \sup_{|\alpha|=k} [D^\alpha u]_{\mathcal W^{s-k,p}}. 
\end{equation*}
The complete norm is constructed via
\begin{equation*}
    \|u\|_{\mathcal W^{s,p}}^p = \| u \|_{\mathcal W^{\lfloor s \rfloor, p} }^p + [ u ]_{\mathcal W^{s,p}}^p.
\end{equation*}
A discussion on these norms can be found in \cite{dT+GC+V2020Taylor,Brasco+GC+Vazquez2020}.  
When $s \notin \mathbb N$ the Sobolev-Slobodecki spaces also coincide with the Besov spaces $B_{p,p}^s (\Rd) = \mathcal W^{s,p} (\Rd)$.

\subsection{Scaling of fractional Sobolev norm}
\label{sec:scaling of fractional Sobolev norm}
It is a direct computation that, if $s \in (0,1)$
\begin{align*}
    [ f(\lambda \cdot) ]_{\mathcal W^{s,p}}^p &= s(1-s)\int_\Rd \int_\Rd \frac{|f(\lambda x) - f(\lambda y)|^p}{|x-y|^{n+sp}} \diff x \diff y \\
    &= s(1-s) \int_\Rd \int_\Rd \frac{|f( x) - f( y)|^p}{|\frac x  \lambda - \frac y  \lambda|^{n+sp}} \lambda^{-2n} \diff x \diff y \\
    & = \lambda^{sp-n} [f]_{\mathcal W^{s,p}}^p.
\end{align*}
Hence, for $s \in (0,1)$, we have that
\begin{equation*}
    [ f(\lambda \cdot) ]_{\mathcal W^{s,p}} = \lambda^{s-\frac n p} [f]_{\mathcal W^{s,p}}.
\end{equation*}
Therefore, still for $s \in (0,1)$, we conclude that
\begin{equation*}
    [D^\alpha (f (\lambda \cdot)) ]_{\mathcal W^{s,p}} = [ \lambda^{|\alpha|} ( D^\alpha f ) (\lambda \cdot)  ]_{\mathcal W^{s,p}} = \lambda^{|\alpha|+s-\frac{n}{p}} [D^\alpha f]_{\mathcal W^{s,p}}.
\end{equation*}

\subsection{A result for fractional Laplacians}
The fractional Laplacian is defined through the Fourier transform as the operator of symbol $|\xi|^{2s}$
\begin{equation*}
    (-\Delta)^s u (x) = \mathcal F^{-1} [|\xi|^{2s} \mathcal F[u] (\xi) ]. 
\end{equation*}
Due to the properties of the convolution
\begin{equation*}
    -\Delta (f * g) = [(-\Delta)^{1-s}f ] * [(-\Delta)^s g].
\end{equation*}
Through the standard Young inequality we have that
\begin{equation}
    \label{eq:Young Laplacian}
    \| \Delta (f * g) \|_{L^r} \le \| (-\Delta)^{1-s}f \|_{L^p} \| (-\Delta)^s g \|_{L^q} , \qquad s \in (0,1), 1 + \frac 1 r = \frac 1 p + \frac 1 q.
\end{equation}
Furthermore, it is known (see \cite[Proposition 2.1.7 and Proposition 2.1.8]{Silvestre2005}) that
\begin{equation}
    \label{eq:fLap in Holder}
    \| (-\Delta)^{s} f \|_{C^\ee } \le C \| f \|_{C^{2s+\ee}},
\end{equation}
whenever $\ee, 2s+\ee \notin \mathbb N$.

\subsection{A result in norms}
\begin{theorem}
\label{thm:fractional Young inequality}
Let $s_0, s_1 \ge 0$, and $p_0, p_1 \in [1,\infty]$. Then, there exists $C$ such that
\begin{equation}
\label{eq:fractional Young inequality}
\begin{aligned} 
    \| f*g \|_{\mathcal W^{ s_0 + s_1  ,p }} \le C & \| f \|_{\mathcal W^{s_0,p_0}} \| g \|_{\mathcal W^{s_1,p_1}}, \qquad \tfrac 1 {p} + 1 = \tfrac {1} {p_0} + \tfrac 1 {p_1}.
\end{aligned}
\end{equation}
\end{theorem}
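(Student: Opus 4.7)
The plan is to combine a reduction to fractional smoothness with a bilinear real interpolation argument. First I would reduce to the case $s_0, s_1 \in [0, 1)$: writing $s_i = k_i + s_i'$ with $k_i \in \mathbb{N}_0$ and $s_i' \in [0,1)$, the identity $D^\alpha(f*g) = (D^{\alpha_0}f)*(D^{\alpha_1}g)$ for any integer split $\alpha = \alpha_0 + \alpha_1$, combined with classical Young's inequality to handle the $L^p$ piece and the integer-order derivatives, reduces the problem to proving
\[
[f*g]_{\mathcal W^{s_0+s_1,p}} \le C \|f\|_{\mathcal W^{s_0, p_0}} \|g\|_{\mathcal W^{s_1, p_1}}
\]
under the assumption $s_0, s_1 \in [0, 1)$.

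Next I would establish the two one-sided bounds at $\sigma := s_0 + s_1 \in [0,2)$,
\[
[f*g]_{\mathcal W^{\sigma,p}} \le C \|f\|_{L^{p_0}} \|g\|_{\mathcal W^{\sigma,p_1}}, \qquad [f*g]_{\mathcal W^{\sigma,p}} \le C \|f\|_{\mathcal W^{\sigma,p_0}} \|g\|_{L^{p_1}}.
\]
For the first, the identity $\Delta_h(f*g) = f*(\Delta_h g)$ inserted into the Slobodeckij integral, followed by Young's inequality in $x$, yields
\[
[f*g]_{\mathcal W^{\sigma,p}}^p \le \|f\|_{L^{p_0}}^p \int_{\Rd} \frac{\|\Delta_h g\|_{L^{p_1}}^p}{|h|^{n+\sigma p}}\,dh,
\]
and the remaining $h$-integral is the $p$-th power of the Besov seminorm $[g]_{B^{\sigma}_{p_1, p}}$. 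Since Young's relation $\tfrac1{p_0}+\tfrac1{p_1}=1+\tfrac1p$ forces $p_1 \le p$, the standard embedding $\mathcal W^{\sigma,p_1}=B^\sigma_{p_1,p_1} \hookrightarrow B^\sigma_{p_1,p}$ closes the estimate; the symmetric bound is analogous.

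Finally I would apply bilinear real interpolation (Lions--Peetre) to the two endpoint maps with parameter $\theta = s_0/\sigma \in [0,1]$. Using the identification $\mathcal W^{s,p}=B^s_{p,p}$ and the real-interpolation identities $(L^{p_0}, \mathcal W^{\sigma, p_0})_{\theta, p_0} = \mathcal W^{s_0, p_0}$ and $(\mathcal W^{\sigma, p_1}, L^{p_1})_{\theta, p_1} = \mathcal W^{s_1, p_1}$, combined with the trivial $(\mathcal W^{\sigma,p}, \mathcal W^{\sigma,p})_{\theta, q} = \mathcal W^{\sigma,p}$ on the target, delivers the claimed bound. The bilinear compatibility condition $\tfrac1p \le \tfrac1{p_0}+\tfrac1{p_1}$ follows directly from Young's relation.

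The hard part will be the careful justification of the Besov--Slobodeckij identification and the real-interpolation identities at the endpoint cases $p_i \in \{1,\infty\}$ and when the intermediate index $\theta\sigma$ hits an integer. The normalization factor $s(1-s)$ used in the paper's definition of $[\cdot]_{\mathcal W^{s,p}}$ is precisely what yields $\mathcal W^{0,p} = L^p$ in the Bourgain--Brezis--Mironescu sense, and this is the key ingredient that makes the boundary interpolation identities robust across the full range of parameters stated in the theorem.
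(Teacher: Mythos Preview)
Your approach is correct and takes a genuinely different route from the paper's. Both proofs rest on real interpolation together with the Besov/interpolation embedding $B^\sigma_{q,q}\hookrightarrow B^\sigma_{q,r}$ for $q\le r$ (equivalently $(X_0,X_1)_{\theta,q}\subset (X_0,X_1)_{\theta,r}$), and this is exactly what absorbs the mismatch between the third Besov index $p$ coming out of the convolution estimate and the desired index $p_1$. The difference is structural: the paper freezes $f$ and interpolates linearly in $g$ between $L^{p_1}$ and $\mathcal W^{k_1,p_1}$ to obtain the one-sided bound (your Step~2 done abstractly), then freezes $g$ and interpolates linearly in $f$ between $L^{p_0}$ and $\mathcal W^{k_0,p_0}$, invoking the reiteration theorem to identify the moving target spaces. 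You instead obtain the one-sided bound by a direct Slobodeckij/difference-quotient computation, and then apply Lions--Peetre \emph{bilinear} interpolation with a fixed target $\mathcal W^{\sigma,p}$; the required compatibility $\tfrac1{p_0}+\tfrac1{p_1}\ge 1$ is exactly Young's relation. Your route is arguably more elementary at the first stage and avoids the reiteration theorem, at the cost of invoking the bilinear interpolation theorem; the paper's route stays within linear interpolation throughout.

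Two small points to tighten. First, your direct Slobodeckij argument for $[f*g]_{\mathcal W^{\sigma,p}}\le C\|f\|_{L^{p_0}}[g]_{B^\sigma_{p_1,p}}$ is written for $\sigma\in(0,1)$; when $\sigma=s_0+s_1\in[1,2)$ you need either second differences or to pass one derivative onto $g$ before running the same computation with $\sigma-1\in(0,1)$. Second, the final remark about the $s(1-s)$ normalization and Bourgain--Brezis--Mironescu is not actually what makes the interpolation identities go through here: after your reduction both $s_0,s_1\in[0,1)$, so the intermediate indices $\theta\sigma=s_0$ and $(1-\theta)\sigma=s_1$ are never positive integers and the standard identification $(L^{p_i},\mathcal W^{\sigma,p_i})_{\theta,p_i}=\mathcal W^{s_i,p_i}$ applies directly; the endpoint $p_i\in\{1,\infty\}$ cases are likewise covered by the standard Besov identification used in the paper.
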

We will use $K$-interpolation. We introduce some definitions and results that can be found in \cite[Chapter 5]{Bennet+Sharpley1988}.

\paragraph{$K$-interpolation.} 
Given $X_0, X_1$ we say they are compatible spaces if they can be embedded into a common Hausdorff topological space $Z$. We define, for $p \in [1,\infty)$
\begin{equation*}
\vertiii{u}_{\theta,p;X_0,X_1} \defeq \left( \int_0^{+\infty} \left( \frac{ K(t,u; X_0, X_1)}{t^{\theta}} \right)^p \frac{\diff t}{t} \right)^{\frac 1 p}
\end{equation*}
and
\begin{equation*}
   \vertiii{u}_{\theta,\infty;X_0,X_1} = \sup_{t > 0} t^{-\theta} K(t,u; X_0, X_1),
\end{equation*}
where
\begin{equation}\label{eq:DefKline}
	K(t,u; X_0, X_1) = \inf  \left\{   \|u_0\|_{X_0} +  t \|u_1 \|_{X_1} : u = u_0 + u_1, u_i \in X_i \right\}.
\end{equation}
We define
\begin{equation*}
    (X_0, X_1)_{\theta,p} = \{ u \in X_0 + X_1 :  \vertiii{u}_{\theta,p;X_0,X_1} < \infty \} .
\end{equation*}
\paragraph{Operators in interpolation spaces} 
The key result we will use is that if for compatible pairs $(X_0,X_1)$ and $(Y_0, Y_1)$ and an operator $T: X_i \to Y_i$ for both $i = 0,1$ we have that $T : (X_0, X_1)_{K;p,\theta} \to (Y_0, Y_1)_{K;p,\theta}$ and we have
\begin{equation*}
    \sup_{u \ne 0} \frac{ \vertiii{ T u }_{ \theta, p;  Y_0, Y_1} }{\vertiii{ u }_{ \theta, p;  X_0, X_1}} \le \left(\sup_{u \ne 0} \frac{ \| T u \|_{Y_0} }{\|u \|_{X_0} } \right)^{1-\theta} \left( \sup_{u \ne 0} \frac{ \| T u \|_{Y_1} }{\|u \|_{X_1} } \right)^{\theta} 
\end{equation*}
This is proved in \cite[Theorem 1.12, Chap 5]{Bennet+Sharpley1988}. 

\paragraph{Embedding.} We will also use the embedding formula \cite[Proposition 1.10, Chap 5]{Bennet+Sharpley1988}, which says that
\begin{equation*}
    (X_0, X_1)_{\theta, q} \subset (X_0, X_1)_{\theta,r}, \qquad \theta \in (0,1), 1 \le q \le r \le \infty.
\end{equation*}
Hence, there exists $C_r(\theta, q, r, X_0, X_1)$ such that
\begin{equation}
    \label{eq:interpolation embedding}
    \vertiii{u}_{\theta,r;  X_0, X_1} \le C_i(\theta, q, r, X_0, X_1) \vertiii{u}_{\theta,q;  X_0, X_1}, \qquad \theta \in (0,1), 1 \le q \le r \le \infty.
\end{equation}

\paragraph{Interpolation reiteration.}
An important result  \cite[Theorem 2.4, Chap 5]{Bennet+Sharpley1988} states that taking interpolation of interpolations is, in itself, and interpolation of the original spaces. Let $0 \le \theta_0 <\theta \le 1 $ and $q, q_0, q_1 \in [1,\infty]$
\begin{equation}
\label{eq:intepolation reiteration}
    \Bigg( (X_0, X_1)_{\theta_0, q_0}, (X_0, X_1)_{\theta_1, q_1} \Bigg)_{\theta,q} = (X_0, X_1)_{\theta', q}, \qquad \theta' = (1-\theta)\theta_0 + \theta \theta_1.
\end{equation}

\paragraph{Interpolation of Sobolev spaces.} According to \cite[Theorem 4.17, Chap 5]{Bennet+Sharpley1988}, if $s_0 \ne s_1$, $\theta \in (0,1)$ and $p,q \in [1,\infty]$ we have
\begin{equation*}
    (\mathcal W^{s_0,p} (\Rd), \mathcal W^{s_1,p} (\Rd) )_{  \theta, q  } = B_{s_\theta,q}^p (\Rd).
\end{equation*}
Here and below
$
    s_\theta = (1-\theta) s_0 + \theta s_1.
$
In particular, we obtain that
\begin{equation*}
    (\mathcal W^{s_0,p} (\Rd), \mathcal W^{s_1,p} (\Rd) )_{ \theta, p  } = \mathcal W^{s_\theta,p} (\Rd), \qquad s_0 \ne s_1 , \theta \in (0,1), p \in [1,\infty], s_\theta \notin \mathbb N.
\end{equation*}
This can be computed from the interpolation between $(L^p, \mathcal W^{k,p})$ and the reiteration formula \eqref{eq:intepolation reiteration}.
To be precise, this means that
\begin{equation}
\label{eq:interpolation norm equivalence}
\begin{aligned}
    \frac{1}{C_e(p,s_0,s_1,\theta)} \| u \|_{\mathcal W^{s_\theta,p}} \le &\vertiii{u}_{ \theta, p;  \mathcal W^{s_0,p}, \mathcal W^{s_1,p}} \le {C_e(p,s_0,s_1,\theta)} \| u \|_{\mathcal W^{s_\theta,p}}, \\
    &\qquad \theta \in (0,1), s_0 \ne s_1, s_{\theta} \notin \mathbb N, p \in [1,\infty].
\end{aligned}
\end{equation}
\paragraph{Proof of \Cref{thm:fractional Young inequality}.}
\textbf{Step 1. $s_0=k_0,s_1=k_1 \in \mathbb N$.}
    Due to the standard Young inequality, we know that
\begin{equation*}
    \| f* g \|_{L^{p}} \le \|f \|_{L^{p_0}}\|g \|_{L^{p_1}}.
\end{equation*}
Applying the result for derivatives, let us write $s = k = k_0 + k_1$ and $\alpha = \alpha_0 + \alpha_1  $
\begin{align*}
    [f*g]_{\mathcal W^{k,p}} &= \sup_{|\alpha|=k} \left\|D^\alpha (f*g) \right\|_{L^{p}} \le \sup_{|\alpha|=k}\| D^{\alpha_0} f \|_{L^{p_0}}\left\|D^{\alpha_1}g\right\|_{L^{p_1}} 
    = [f]_{\mathcal W^{k_0,p_0}} [g]_{\mathcal W^{k_1,p_1}}.
\end{align*}
Thus, we have as expected that
\begin{equation*}
    \| f * g \|_{\mathcal W^{k,p}} \le \| f \|_{\mathcal W^{k_0,p_0}} \| g \|_{\mathcal W^{k_1,p_1}}. 
\end{equation*}

\noindent \textbf{Step 2. $s_0=0, 0 < s_1 \notin \mathbb N$.}
We define the map
$
    T_f : g \mapsto f*g.
$
If $s_1 < k_1 \in \mathbb N$ and $s_1 = (1-\theta)0 + \theta k_1$, then
\begin{equation*}
    \frac{ \| T_f g \|_{L^{p}} }{\|  g \|_{L^{p_1}}} \le \| f \|_{L^{p_0}}, \qquad \text{ and } \qquad \frac{ \| T_f g \|_{\mathcal W^{k_1,p}} }{\|  g \|_{\mathcal W^{k_1,p_1}}} \le \| f \|_{L^{p_0}}.
\end{equation*}
By interpolation, we get
\begin{equation*}
    \frac{ \vertiii{T_f g}_{ \theta, p;  L^p, \mathcal W^{k_1,p}} }{\vertiii{g}_{ \theta, p;  L^{p_1}, \mathcal W^{k_1,p_1}}} \le \| f \|_{L^{p_0}}
\end{equation*}
Notice that the interpolation for $g$ is with $p$, and not $p_1$ as we would like. Using the embedding formula \eqref{eq:interpolation embedding} since $p \ge p_1$ we have
\begin{equation*}
    { \vertiii{f*g}_{ \theta, p;  L^p, \mathcal W^{k_1,p}} } \le C_r(\theta,p_1,p,L^p, \mathcal W^{k_1,p}) \| f \|_{L^{p_0}} {\vertiii{g}_{ \theta, p_1;  L^{p_1}, \mathcal W^{k_1,p_1}}}.
\end{equation*}
Using \eqref{eq:interpolation norm equivalence} we deduce the result, multiplying the constant $C_e$ to our right-hand side.

\noindent \textbf{Step 3. $s_0 \in \mathbb N, 0 < s_1 \notin \mathbb N$.} We proceed as in Step 1, followed by Step 2.

\noindent \textbf{Step 4. $s_0, s_1 \notin \mathbb N$.} Now we must interpolate in $f$. For $g$ fixed we define $T_g : f \mapsto f * g$. 
Let $s_0 < k_0 \in \mathbb N$ and $\theta$ such that $s_0 =  (1-\theta)0 + \theta k_0$. 
By Steps 2 and 3 we have that
\begin{equation*}
    \frac{ \| T_g f \|_{\mathcal W^{s_1,p} }}{\|  f \|_{L^{p_0}}} \le C\| g \|_{\mathcal W^{s_1,p_1}}, \qquad \text{ and } \quad \frac{ \| T_g f \|_{\mathcal W^{k_0 + s_1,p}} }{\|  f \|_{\mathcal W^{k_0,p_0}}} \le C\| g \|_{\mathcal W^{s_1,p_1}}.
\end{equation*}
Hence, by interpolation we obtain that
\begin{equation*}
    \frac{ \vertiii{T_g f}_{ \theta, p;  \mathcal W^{s_1,p}, \mathcal W^{k_0+s_1,p}} }{\vertiii{f}_{ \theta, p;  L^{p_0}, \mathcal W^{k_0,p_0}}} 
    \le C \| g \|_{\mathcal W^{s_1,p_1}}.
\end{equation*}
We apply again the embedding \eqref{eq:interpolation embedding} and \eqref{eq:interpolation norm equivalence} to conclude the result.\qed

\section{Relating \texorpdfstring{$\rho \log \rho$}{rho log rho} and the second moment}
We provide a general result relating moment-type bounds and the integrability of $F(\rho)$ with the integrability of $|F(\rho)|$. This is very useful to obtain equi-integrability results. In the case of $F(s) = s\log s $ and the second moment, this is very classical (see, e.g. \cite{BD95,BCC12}).
\begin{lemma}
\label{lem:rho log rho + second moment implies rho | log rho |}
    Let 
    $F : \mathbb R \to \mathbb R$  continuous be such that:
    \begin{enumerate} 
    \item For some $\rho_1, \rho_2 > 0$
    $
        |F| \text{ is non-decreasing in }  [0, \rho_1] $, $F\ge 0 \text{ in } [\rho_2 , +\infty)
    $
    \item There exists $g$ non-decreasing and locally integrable such that
      $
        \rho \in L^1 (\Rd)$, 
    \begin{equation*} \mathcal N_g [\rho] = \int_\Rd g(|x|) \rho(x) < \infty
    \qquad 
    \text{ and define }
    G(s) = \int_0^s g(r) r^{n-1} \diff r.
    \end{equation*}
    \end{enumerate}
	Then, there exists $C_0, r_0$ depending only on $\rho_1, \rho_2$ and $\| \rho \|_{L^1}$ such that
	\begin{equation*}
	    \int_\Rd  |F(\rho)|  \le \int_{\Rd} F(\rho)  + C_0 \left(  \sup_{ \rho_1 < s < \rho_2} |F(s)| + \int_{|x| > r_0} \left| F \left(  \frac{\mathcal N_g [\rho]  }{|S^{n-1}|G(|x|)}  \right)   \right|  \right).
	\end{equation*}
\end{lemma}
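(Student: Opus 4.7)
The plan is to write $\int_\Rd |F(\rho)| = \int_\Rd F(\rho) + 2 \int_\Rd F_-(\rho)$ with $F_-(\rho) := \max(-F(\rho),0)$, so that the task reduces to estimating $\int F_-(\rho)$. Since $F \ge 0$ on $[\rho_2,\infty)$, this integrand is supported on $\{\rho<\rho_2\} = \{\rho_1 \le \rho<\rho_2\} \cup \{\rho<\rho_1\}$. On the middle layer Chebyshev gives $|\{\rho \ge \rho_1\}| \le \|\rho\|_{L^1}/\rho_1$ and $|F| \le \sup_{(\rho_1,\rho_2)} |F|$ there by continuity, so this piece contributes only a $\|\rho\|_{L^1}\rho_1^{-1}$ multiple of the supremum term on the RHS.

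For the low-density region $\{\rho<\rho_1\}$, I would first pass to the symmetric decreasing rearrangement $\rho^\#$, which preserves both $\int F(\rho)$ and $\int |F(\rho)|$ by equimeasurability; the anti-rearrangement inequality against the radially non-decreasing weight $g(|\cdot|)$ yields $\mathcal N_g[\rho^\#] \le \mathcal N_g[\rho]$. The monotonicity of $\rho^\#$ then gives the pointwise bound
\[
\rho^\#(x)\,|S^{n-1}|G(|x|) \le \int_{B_{|x|}} g(|y|)\rho^\#(y)\,\diff y \le \mathcal N_g[\rho],
\]
i.e.\ $\rho^\#(x) \le \phi(|x|) := \mathcal N_g[\rho]/(|S^{n-1}|G(|x|))$. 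Choosing $r_0 := (n\|\rho\|_{L^1}/(|S^{n-1}|\rho_1))^{1/n}$, the bound $\rho^\#(r)|B_r| \le \|\rho\|_{L^1}$ guarantees $\rho^\#(x) \le \rho_1$ for $|x| \ge r_0$. The sub-piece $\{\rho^\#<\rho_1, |x|\le r_0\}$ has measure $\le |B_{r_0}|$ and, by monotonicity of $|F|$ on $[0,\rho_1]$ together with continuity, contributes only a constant multiple of $\sup_{(\rho_1,\rho_2)}|F|$.

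The remaining region $\{|x|>r_0\}$ is where $\phi$ enters. There $\rho^\#(x) \le \min(\phi(|x|),\rho_1)$, so monotonicity of $|F|$ on $[0,\rho_1]$ gives $|F(\rho^\#(x))| \le |F(\min(\phi(|x|),\rho_1))|$. On the sub-region where $\phi(|x|) \le \rho_1$ this is exactly $|F(\phi(|x|))|$, directly absorbed into $\int_{|x|>r_0}|F(\phi)|$ on the RHS. The delicate sub-region is the annulus $\{|x|>r_0, \phi>\rho_1\}$, whose measure depends on $\mathcal N_g[\rho]$ and can be arbitrarily large; there the integrand is only bounded by $|F(\rho_1)| \le \sup_{(\rho_1,\rho_2)}|F|$. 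I expect the main technical obstacle to be showing that this annular contribution is absorbed by the RHS: as $\mathcal N_g[\rho] \to \infty$, both the measure of the annulus and $\int_{|x|>r_0}|F(\phi(|x|))|\,\diff x$ (which picks up large contributions from the region where $\phi$ is big near $|x|=r_0$) scale consistently, so that their ratio is controlled by a constant depending only on $\rho_1, \rho_2$ and $\|\rho\|_{L^1}$, with the residue in the small-$\mathcal N_g$ regime handled directly by the supremum term.
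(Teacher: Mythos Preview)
Your approach coincides with the paper's almost line for line: the same $|F|=F+2F_-$ decomposition, passage to the symmetric decreasing rearrangement $\rho^*$ (preserving $\int F_\pm(\rho)$ and decreasing $\mathcal N_g$), Lieb's trick $\rho^*(x)\le\phi(|x|):=\mathcal N_g[\rho]/(|S^{n-1}|G(|x|))$, and a Chebyshev bound fixing $r_0$ so that $\rho^*\le\rho_1$ outside $B_{r_0}$.

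Where you diverge is in the treatment of $\{|x|>r_0\}$. The paper does \emph{not} split off the annulus $\{\phi>\rho_1\}$ that worries you; it simply writes
\[
\int_{|x|>r_0}|F(\rho^*)|\;\le\;\int_{|x|>r_0}|F(\phi)|
\]
in one stroke, invoking monotonicity of $|F|$ on $[0,\rho_1]$ together with $\rho^*\le\phi$. You are right that this pointwise comparison strictly requires $\phi(|x|)\le\rho_1$ as well, which is not guaranteed---so the point you single out as the ``main technical obstacle'' is exactly a step the paper's own proof passes over without comment. Your scaling heuristic does resolve it for the intended application $F(s)=s\log s$, $g(r)=r^2$: there the annular measure and $\int_{|x|>r_0}|F(\phi)|$ both grow like powers of $\mathcal N_g[\rho]$, with compatible exponents. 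At the stated level of generality, however, the heuristic is not obviously valid (take $F\equiv 0$ on $[\rho_1,\infty)$: the integral term contributes nothing on the annulus while its measure can be made arbitrarily large). So your proposal is, if anything, more scrupulous than the paper here; you have not introduced a gap, but rather noticed one.
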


\begin{proof}
	We decompose the $F$ into its positive and negative parts
	$
		F = F_+ - F_- .
	$
	We consider $\rho^*$ the decreasing rearrangement of $\rho$. We have that
	\begin{equation*}
		\int_\Rd F_\pm ( \rho )  = \int_\Rd F_\pm   ( \rho^* ) , \qquad N_g [\rho^* ] = \int g(|x|) \rho^* \le \int g(|x|) \rho = N_g [ \rho ]. 
	\end{equation*}
	First, let us estimate $\int F_- (\rho)$. 
	Since $\rho^*$ is decreasing, we have can apply Lieb's trick (see \cite{Lieb1983}) to show that 
	\begin{equation*}
		\mathcal N_g [\rho] \ge \int_\Rd g( |x| ) \rho^* \ge |S^{n-1}| \int_{0}^{|x|} g(r) \rho^* r^{n-1}\diff r \ge |S^{n-1}|\rho^* (x) G(|x|),
	\end{equation*}
	where $|S^{n-1}|$ is the measure of the $(n-1)$-dim sphere. 
	Thus, we have that
	\begin{equation*}
		\rho^* (x) \le  \frac{N_g [\rho]} {|S^{n-1}|G(|x|)} .
	\end{equation*}
	Since $\rho^*$ is decreasing and tends to $0$ there exists $r_1, r_2 > 0$ such that
	$
	    \overline {\{ \rho^* > \rho_i \}} = \overline{ B_{r_i}}.  
	$
	Notice that
	\begin{equation*}
	    \int_{\Rd} \rho = \int_\Rd \rho^* \ge \int_{\rho^* > \rho_i} \rho^* \ge \rho_i |\{ \rho^* > \rho_i   \}| = \rho_i | B_1| r_i^n.
	\end{equation*}
	Thus, we can estimate
	\begin{equation*}
	    r_i \le \left( \frac{\int_\Rd \rho }{ \rho_i | B_1| } \right)^{\frac 1 n}.
	\end{equation*}
	We take 
	\[
	r_0 = \max \left\{\left( \frac{\int_\Rd \rho }{ \rho_1 | B_1| } \right)^{\frac 1 n}, \ \left( \frac{\int_\Rd \rho }{ \rho_2 | B_1| } \right)^{\frac 1 n} \right\}.
	\]
	We have that
	\begin{align*}
		\int_\Rd F_- (\rho^*)  &=   \int_{|x| > r_2} F_- (\rho^*)  
		\le  \int_{|x| > r_2} | F (\rho^*) |  
		\le \int_{ r_2 < |x| < r_0} | F (\rho^*) |  + \int_{|x| > r_0} | F (\rho^*) |  \\ 
		&\le   |B_{r_0} \setminus B_{r_2}| \sup_{ r_2 < |x| < r_0} |F (\rho^*)|  + \int_{|x| > r_0} \left| F \left ( \frac{N_g [\rho]} {|S^{n-1}|G(|x|)} \right) \right| .
	\end{align*} 
	Since $F_+ = F + F_-$ we have that $|F| = F + 2 F_-$ and this proves the result.
\end{proof}

\section*{Acknowledgments}
The research of JAC and DGC was supported by the Advanced Grant Nonlocal-CPD (Nonlocal PDEs for Complex Particle Dynamics:
Phase Transitions, Patterns and Synchronization) of the European Research Council Executive Agency (ERC) under the European Union’s Horizon 2020 research and innovation programme (grant agreement No. 883363).
JAC was partially supported by EPSRC grant number EP/T022132/1.
The research of DGC was partially supported by grant PGC2018-098440-B-I00 from the Ministerio de Ciencia, Innovación y Universidades of the Spanish Government. 
The research of YY was partially supported by grants NSF DMS 1715418, 1846745 from the National Science Foundation of the US, and the Sloan Research Fellowship.
The research of CZ was partially supported by grant NSF DMS 19000083 from the National Science Foundation of the US. 
The problem in this paper was discussed at the AIM workshop ``Nonlocal differential equations in collective behavior'' in Jun 2018. JAC and YY thank AIM for support and collaborative opportunity, and thank the workshop participants for stimulating discussions.

\printbibliography

\end{document}